\documentclass[12pt,draft]{amsart}
\usepackage[utf8]{inputenc}
\usepackage[T1]{fontenc}
\usepackage{url}
\usepackage{color}
\usepackage{hyperref}
\usepackage{enumerate}
\usepackage{array}
\usepackage{nicefrac}
\usepackage[section]{placeins}
\usepackage{amsmath,amssymb,enumerate,amsthm}
\usepackage{latexsym}
\usepackage{setspace}
\usepackage{mathtools}
\usepackage{longtable}
\usepackage{mathdots}
\usepackage{graphicx, subfig}
\usepackage{anysize}
\usepackage[linguistics,edges]{forest}

\usepackage{enumitem}

\usepackage[obeyDraft,textsize=tiny]{todonotes}
\usepackage{xpatch}

\presetkeys{todonotes}{
    color=blue!20,
    bordercolor=blue!70!black
}{}

\usepackage{xcolor}

\definecolor{claudiagreen}{HTML}{006400}

\numberwithin{equation}{section}
\theoremstyle{plain}

\newtheorem{theorem}{Theorem}[section]
\newtheorem{lemma}[theorem]{Lemma}
\newtheorem{proposition}[theorem]{Proposition}
\newtheorem{corollary}[theorem]{Corollary}

\theoremstyle{definition}
\newtheorem{definition}[theorem]{Definition}

\newtheorem{example}[theorem]{Example}

\theoremstyle{remark}
\newtheorem{remark}[theorem]{Remark}

\newcommand{\Gr}{\mathrm{Gr}}
\newcommand{\R}{\mathbb{R}}
\newcommand{\N}{\mathbb{N}}
\newcommand{\Z}{\mathbb{Z}}
\newcommand{\Q}{\mathbb{Q}}
\newcommand{\C}{\mathbb{C}}

\newcommand{\e}{\mathfrak{e}}

\renewcommand{\H}{\mathbb{H}}

\newcommand{\SL}{{\mathrm{SL}}}
\newcommand{\PSL}{{\mathrm{PSL}}}

\newcommand{\SO}{{\mathrm{SO}}}

\newcommand{\tr}{\mathrm{tr}}
\newcommand{\sign}{\mathrm{sign}}

\renewcommand{\a}{\mathfrak{a}}

\renewcommand{\O}{\mathrm{O}}
\renewcommand{\cosh}{\mathrm{cosh}}

\renewcommand{\det}{\mathrm{det}}
\renewcommand{\Re}{\mathrm{Re}}
\renewcommand{\Im}{\mathrm{Im}}
\renewcommand{\d}{\mathrm{d}}

\usepackage{tikz}
\usetikzlibrary{patterns}

\title[Traces of Green's functions on $\H^q$ over point-hypersurface pairs]{Traces of Green's functions on $\H^q$ over point-hypersurface pairs}

\author{Claudia Alfes}
\address{Universit\"at Bielefeld, Fakult\"at f\"ur Mathematik, Postfach 100 131, 33501 Bielefeld, Germany}
\email{alfes@math.uni-bielefeld.de}
\thanks{C.~Alfes' research is funded by the Deutsche Forschungsgemeinschaft (DFG, German Research Foundation) -- SFB-TRR 358/1 2023 -- 491392403.}

\author{Jens Funke}
\address{Durham University, Department of Mathematical Sciences, Upper Mountjoy, Stockton Rd, Durham DH1 3LE, United Kingdom}
\email{jens.funke@durham.ac.uk}
\thanks{}

\author{Sebasti\'an Herrero}
\address{Universidad de Santiago de Chile, Departamento de Matem\'atica y Ciencia de la Computaci\'on, Av.~Libertador Bernardo O'Higgins 3363, Santiago, Chile} %, and ETH, Mathematics Dept., CH-8092, Z\"urich, Switzerland}
\email{sebastian.herrero.m@gmail.com}
\thanks{S.~Herrero's research is supported by ANID/FONDECYT/Regular 1250734}

\author{\"Ozlem Imamo\={g}lu}
\address{ETH, Department of Mathematics, CH-8092, Z\"urich, Switzerland}
\email{ozlem@math.ethz.ch}
\begin{document}
\begin{abstract}
The famous algebraicity conjecture of Gross and Zagier \cite{GZ86} for the values at CM points of higher Green's functions was recently proven in \cite{BLY25}. Motivated by this the values of the automorphic Green's function at pairs of special points in hyperbolic $3$-space $\mathbb{H}^3$ were studied in \cite{HIPS25} and an analogous algebraicity result for traces of Green's functions was established. 
In this paper, we consider higher automorphic Green's functions $G_{s}(P,Q)$ on hyperbolic $q$-space $\H^q$ for $q\geq 2$ when the spectral parameter $s$ is a positive even integer, and prove an explicit finite formula for their traces over geodesic hypersurfaces in one of the variables. This formula provides a geometric interpretation of the traces of Green's functions for every choice of a point in $\H^q$ for the second variable. As an application, we obtain rationality results when the second variable is evaluated at a special point providing further evidence that such results for traces of Green's functions do not depend on any underlying complex structure. 
\end{abstract}

\maketitle

% \section*{Notation}

% \begin{enumerate}
%     \item $\H^2$, $\H^3$ and $\H^q$.
%     \item $\kappa=\frac{1-q}{2}$ is the relevant weight.
%     \item $L$ of signature $(1,q)$ with isotropic vector $\ell$. 
%     \item $V=L\otimes \R$, $V_1=\{X_1\in V:(X_1,X_1)=1,(X_1,\ell)>0\}$
%     \item $L=\R\ell+\R\ell'+K$, $(x,y)\in \H^q$ where $x\in W=K\otimes \R$ and $y>0$
%     \item The classical model is $\H^q\simeq \{(x_1,\ldots,x_{q-1},y)\in \R^q: y>0\}$
%     \item $X\in V$
%     \item $z\in \Gr(L)$ and  $z=\R X_1$ with $X_1\in V_1$
%     \item $\C[L'/L]$ with basis $\e_\mu$ and $\langle \cdot ,\cdot \rangle$ is linear in first variable, conjugate-linear in the second variable (as in \cite[p. 17]{brhabil})
%     \item $\tau=u+iv$
%     \item $\mu,\gamma \in L'/L$
%     \item Fourier coefficients of $f$ are $a_f^{\pm}(\mu,m)$
%     \item Iterated raising operator $R^j_{\kappa-2j}$; Green's function at $s=2+2j$ is $G_{2+2j}(\cdot,\cdot)$
% \end{enumerate}

\setcounter{tocdepth}{1}

\tableofcontents

\section{Introduction}

Let $\H:=\{\tau\in \C: \Im(\tau)>0\}$ be the complex upper half-plane and let $\Gamma:=\PSL_2(\Z)$ be the modular group. The automorphic  Green's function  $G_s(\tau_1,\tau_2)$ on $\H$ for $\Gamma$ 
%is defined as 
%$$G_s(\tau_1,\tau_2):=\frac{1}{2\pi}\sum_{\gamma %\in \Gamma}Q_{s-1} \left(\cosh \, d(\tau_1,\gamma %\tau_2)\right)$$
%for $\tau_1,\tau_2\in \H$ with $\Gamma \tau_1\neq %\Gamma\tau_2$ and $s\in \C$ with $\Re(s)>1$, where %$d(\cdot,\cdot)$ denotes the hyperbolic distance and 
%$$Q_{s-1}(t)=\int_0^\infty (t+\sqrt{t^2-1}\,\cosh %\,x)^{-s}  \d x \qquad (t>1)$$
%is the Legendre function of the second kind. The %function $G_s(\tau_1,\tau_2)$ 
admits a meromorphic continuation to $s\in \C$ with  %$s=1$ being the only pole in the half-plane $%\Re(s)>\frac{1}{2}$. The pole at $s=1$ is 
a simple pole at $s=1$ and constant residue. The constant term of the Laurent expansion at $s=1$ is essentially $ \log|j(\tau_1)-j(\tau_2)|$, where $j$ is the classical Klein invariant. Interpreting this constant term in the Laurent expansion as $G_1(\tau_1,\tau_2)$ and using the theory of complex multiplication, one obtains that for $\tau_1,\tau_2$ CM points the value $G_1(\tau_1,\tau_2)$ is, up to an explicit multiplicative factor, the logarithm of an algebraic integer. Gross and Zagier \cite{GZ85} further exploited this relation to give a factorization of differences of singular moduli.

At the end of their highly influential paper \cite{GZ86} Gross and Zagier conjectured that a similar algebraicity result holds for the values at CM points of the higher Green's functions $G_k(\tau_1,\tau_2)$, where $k\geq 2$ is an integer. 
This conjecture was proven by Bruinier, Li and Yang \cite{BLY25} after partial results by Gross, Kohnen and Zagier \cite{GKZ87}, Mellit \cite{Mel08}, Viazovska \cite{Via11,Via19}, Bruinier, Ehlen and Yang \cite{BEY21} and Li \cite{Li22,Li23}.

Motivated by these works, the values of the automorphic Green's function at pairs of special points in hyperbolic $3$-space $\mathbb{H}^3$ were studied in \cite{HIPS25} and an analogous algebraicity result for traces of Green's functions was proven. 

In this paper, we turn our attention to the case of pairs of a point and a geodesic hypersurface in the $q$-dimensional hyperbolic space $\H^q$ for any $q\geq 2$. 

%Here $q$ denotes the real dimension, hence $\H$ corresponds to the case $q=2$.
We begin by expressing the traces of the integrals of Green's functions over geodesic hypersurfaces, taken with respect to one of the variables, as regularized Siegel theta lifts of harmonic Maass--Poincar\'e series. Taking rational linear combinations of traces leads to regularized Siegel theta lifts of weakly holomorphic modular forms. We then derive an explicit finite formula for these theta lifts by adapting the methods of \cite{borcherds}, following the approach of Bruinier and Schwagenscheidt \cite{BS21}. This formula provides a geometric interpretation of our rational linear combination of traces of Green's functions at every point of $\H^q$ for the remaining variable, in terms of the hyperbolic distances to the finitely many geodesic hemispheres enclosing that point. As an application, we establish rationality results when the remaining variable is evaluated at a special point. As a technical caveat, the techniques used in this paper apply only to higher Green's functions $G_s(P,Q)$ where $s\geq 2$ is an even integer.

\bigskip

\emph{The case $q=2$.}
Before we describe our results in general, we start with the more familiar case of $\H=\H^2$.  Consider the lattice
\begin{equation}\label{eq:L-binary}
    L:= \left\{ X = \left(\begin{matrix}
b & c \\ -a & -b
\end{matrix}\right) \colon a,b,c \in \Z \right\}
\end{equation}
with the quadratic form $Q(X)=\det(X)$ of signature $(1,2)$. Its dual lattice is
$$L'= \left\{ \left(\begin{matrix}
b/2 & c \\ -a & -b/2
\end{matrix}\right) \colon a,b,c \in \Z \right\}.$$
The modular group $\Gamma=\PSL_2(\Z)$ acts on $L'$ via $gX:=gXg^{-1}$. This action preserves the quadratic form $Q$ and is trivial on $L'/L$. For $m\in \frac{1}{4}\Z$ put $L'_m:=\{X\in L':Q(X)=m\}$.

The assignment  $ \left(\begin{smallmatrix}
b/2 & c \\ -a & -b/2
\end{smallmatrix}\right)\mapsto [a,b,c]:= ax^2+bxy+cy^2$ identifies $L'$ with the set $\mathcal{Q}$ of integral binary quadratic forms and this identification is $\Gamma$-equivariant\footnote{The action of $\Gamma$ on $\mathcal{Q}$ is $(g\cdot [a,b,c])(x,y):=[a,b,c]((x,y)g^{-t})$.}. Note that the discriminant of $[a,b,c]$ is $b^2-4ac=-4\det(X)$, hence $L'_m$ is in bijection with $\mathcal{Q}_{-4m}:=\{[a,b,c]\in \mathcal{Q}:b^2-4ac=-4m\}$.

Every $X=\left(\begin{smallmatrix}
b/2 & c \\ -a & -b/2
\end{smallmatrix}\right)\in L'$ with  $d:=-4\det(X)<0$  defines a CM point
$$\tau_X:=\frac{-b}{2a}+\frac{i\sqrt{|d|}}{2|a|}\in \H.$$
Similarly, every $Y=\left(\begin{smallmatrix}
b/2 & c \\ -a & -b/2
\end{smallmatrix}\right)\in L'$ with  $D:=-4\det(Y)>0$  defines a geodesic
$$S_Y=S_{[a,b,c]}:=\{\tau \in \H: a|\tau|^2+b\Re(\tau)+c=0\}.$$
%joining two points $w_Y,w_Y'\in \R\cup \{\infty \}$. 
Let $\Gamma_Y=\Gamma_{[a,b,c]}$ be the stabilizer of $Y$ in $\Gamma$. On $S_Y$ we have the $\Gamma_Y$-invariant arc element
$$\d \sigma:=\frac{ |\d \tau|}{\Im(\tau)}.$$
The quotient $\Gamma_Y\backslash S_Y$ is a real-analytic curve that has finite length with respect to $\d \sigma$ if and only if $D$ is not a perfect square. 

We let $G_s(\tau_1,\tau_2)$ be the automorphic  Green's function on $\H$ for $\Gamma$ given by 
$$G_s(\tau_1,\tau_2):=\frac{1}{2\pi}\sum_{\gamma \in \Gamma}Q_{s-1} \left(\cosh \, d(\tau_1,\gamma \tau_2)\right) \qquad (\Re(s)>1).$$
%for $\tau_1,\tau_2\in \H$ with $\Gamma \tau_1\neq \Gamma\tau_2$ and $s\in \C$ with $\Re(s)>1$. 
Here $d(\cdot,\cdot)$ denotes the hyperbolic distance and 
$Q_{s-1}(t)=\int_0^\infty (t+\sqrt{t^2-1}\,\cosh \,x)^{-s}  \d x$ is (for $t>1$) the Legendre function of the second kind. 
%The function $G_s(\tau_1,\tau_2)$ 
%admits a meromorphic continuation to $s\in \C$ with % $s=1$ being the only pole in the half-plane $%\Re(s)>\frac{1}{2}$. The pole at $s=1$ is 
% a simple pole at $s=1$ and constant residue.
For every discriminant $D>0$ we define
$$\tr_{D}\,G_{s}(\tau_1,\cdot):=\sum_{\substack{[Y]\in \Gamma\backslash L_{-D/4}'}}\int_{\Gamma_Y\backslash S_Y}G_{s}(\tau_1,\tau_2)\d \sigma(\tau_2).$$
Note that the sum is finite since $\Gamma\backslash L'_{-D/4}\simeq \Gamma\backslash \mathcal{Q}_D$ is finite. In Section \ref{sec:traces_Greens} we prove that each of the integrals above is absolutely convergent for every $\tau_1\in \H$ (see Proposition \ref{prop:tracegreenshypergeometric}).

The following theorem is a consequence of a special case of Theorem \ref{thm:main_general} in Section \ref{sec:traces_Greens}.  To state it, for every integer $k$ we let $S_{k+\frac{1}{2}}^+(\Gamma_0(4))$ denote the space of weight $k+\frac{1}{2}$ holomorphic cusp forms for  $\Gamma_0(4)$ satisfying Kohnen's plus condition\footnote{For $k\in \Z$, a modular form $f$ of weight $k+\frac{1}{2}$ for $\Gamma_0(4)$ is said to satisfy Kohnen's plus condition if its Fourier coefficients $a_f(n)$ vanish whenever $(-1)^kn\equiv 2,3$ mod 4.}. 

For $z\in \C$ and $r\in \N$ let $(z)_r:=z(z+1)\cdots (z+r-1)$ and $(z)_0:=1$.

\begin{theorem}\label{thm:1.1}
Let $j\in \N_0$ and assume $S_{\frac{5}{2}+2j}^+(\Gamma_0(4))=\{0\}$. Then, for every non-square discriminant $D>0$ there exists $C\in \sqrt{D}\,\Q$, depending on both $j$ and $D$, such that for every $\tau=x+iy\in \H$ we have
\begin{equation*}
       \mathrm{tr}_{D}\,G_{2+2j}(\tau,\cdot)=\frac{C}{y^{1+2j}}- 2\sum_{\substack{[a,b,c]\in \mathcal{Q}_D\\  a(y^2+x^2)+bx+c>0 >a}}
\sum_{r=0}^j \binom{j}{r}
\frac{(j+\frac32)_r}{(\frac32)_r}\sinh^{1+2r} d(\tau,S_{[a,b,c]}).
       %\sum_{r=0}^j (-1)^r
   %{j \choose r}\left(j+\frac{3}{2}\right)_{r}
  %\Gamma\left(-\frac{1}{2}-r\right)\sinh^{1+2r} d(x+iy,S_Y).
    \end{equation*}
 %    \begin{equation}\begin{split}
 %    & \frac{1}{(4\pi)^{j+1}}\Psi^j(f,(x,y))\\
 %   &=  j!   a_f(0)\frac{(j+\frac{3}{2})_{j}}{(2+2j)!y^{1+2j}}  B_{2+2j} \\
 % & \quad  +\frac{D^{j+\frac{1}{2}}}{4^j\sqrt{\pi}} \sum_{\substack{Y=a\ell'+c\ell+b\in L'\\ Q(Y)=-D/4\\ a(y^2+x^2)+2bx+c>0 >a}} \sum_{r=0}^j (-1)^r
 %   {j \choose r}\left(j+\frac{3}{2}\right)_{r}
 %  \Gamma\left(-\frac{1}{2}-r\right)\sinh^{1+2r} d(x+iy,S_Y).
 %  \end{split}
% \end{equation}
Moreover, the double sum on the right hand side is a finite sum.
%over $[a,b,c]\in \mathcal{Q}_D$ is a finite sum and runs over geodesics semicircles $S\subset \H$ so that $x+iy$ in the bounded component of $\H\setminus S$.
\end{theorem}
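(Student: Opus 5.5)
The plan is to realize the trace as a regularized Siegel theta lift and then evaluate that lift by unfolding, in the spirit of Borcherds and of Bruinier--Schwagenscheidt \cite{BS21}.

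\textbf{Step 1: trace as a lift of a Poincar\'e series.} We work with the lattice $L$ of \eqref{eq:L-binary}, which has signature $(1,2)$, and with weight $\kappa=\frac{1-q}{2}=-\frac12$. By the general result stated later (Proposition \ref{prop:tracegreenshypergeometric} and the surrounding discussion), $\tr_D G_{2+2j}(\tau,\cdot)$ is a constant multiple of a regularized theta lift. Concretely, we apply the iterated raising operator $R^j_{\kappa-2j}$ to a harmonic Maass--Poincar\'e series $F_{-D/4}$ of weight $\kappa-2j=-\frac12-2j$, whose principal part is $\e_\mu q^{-D/4}$, and then pair the result against the Siegel theta function $\Theta_L(\cdot,\tau)$. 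This identification rests on two facts. First, the Green's function is the lift of the Poincar\'e series (Niebur/Fay type). Second, the integral over $\Gamma_Y\backslash S_Y$ converts the sum over $\Gamma$ into the sum over $L'_{-D/4}$. In the present statement I would take that proposition as given.

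\textbf{Step 2: replace the Poincar\'e series by a weakly holomorphic form.} The hypothesis $S^+_{\frac52+2j}(\Gamma_0(4))=\{0\}$ says that the obstruction space is trivial. Via the Eichler--Zagier/Kohnen isomorphism, $S^+_{\frac52+2j}(\Gamma_0(4))$ is the space of cusp forms of dual weight $2-\kappa+2j$ for the Weil representation $\rho_L$. Serre duality (the Bruinier--Funke pairing) therefore shows that the principal part $q^{-D/4}\e_\mu$ (plus constant) is realized by a weakly holomorphic form $f$ of weight $-\frac12-2j$. By $\xi$-considerations $f$ equals the Poincar\'e series, up to a multiple of an Eisenstein-type contribution. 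Since the dual space of Eisenstein series is one-dimensional, the Fourier coefficients can be taken rational, with $a_f(0)\in\Q$. Hence $\tr_D G_{2+2j}$ is a rational multiple of $\Phi^j(f,\tau)$, up to a term coming from the constant coefficient. That term produces the $y^{-1-2j}$ contribution, with $C$ involving $a_f(0)$ and Bernoulli numbers, times a factor $\sqrt D$ from normalizations of the trace.

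\textbf{Step 3: evaluate $\Phi^j(f,\tau)$ by unfolding.} I use the isotropic vector $\ell$ and split $L=\Z\ell+\Z\ell'+K$ with $K$ of signature $(0,1)$. Borcherds' Theorem 7.1 then writes the lift as a lift to $K$ plus a sum over $\lambda\in K'$ and $n\ge1$. By \cite{BS21}, with the raising operators moved onto the theta kernel, each term involves Bessel-type integrals. Because $f$ is weakly holomorphic, only finitely many negative coefficients contribute, and the resulting $n$-sums can be summed in closed form. Since $K$ is negative definite of rank one, the lift to $K$ is essentially a constant times $y^{-1-2j}$.

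The remaining terms collapse into a sum over $Y\in L'_{-D/4}$. These are piecewise-polynomial functions in $\sinh d(\tau,S_Y)$, and they are nonzero only when $\tau$ lies inside the semicircle with the correct orientation, namely $a(x^2+y^2)+bx+c>0>a$. The explicit polynomial $\sum_r\binom jr\frac{(j+3/2)_r}{(3/2)_r}\sinh^{1+2r}$ arises from expanding $R^j$ applied to the Gaussian. Finiteness of the sum holds because only finitely many geodesics of discriminant $D$ enclose a given point: the condition forces $|a|y^2<\sqrt D/2$, which bounds $a$, and then $b,c$ are bounded as well.

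\textbf{Main obstacle.} The hardest part is Step 3. There one must carry out the explicit computation of the unfolded integrals with $R^j$ acting, identify the resulting hypergeometric or Bessel sums with the finite $\sinh$-polynomial, and track the wall-crossing, which accounts for the one-sided condition $a<0$. My first approach would be to compute the lift first at $j=0$, where it is a piecewise-linear Borcherds-type lift in $\sinh d$. I would then use the identity that $R$ on the lattice side corresponds to the hyperbolic Laplacian, $\Delta\Phi^j=\text{const}\cdot\Phi^{j-1}$, to induct on $j$, and match coefficients with the given polynomial. The constant of integration in this induction is fixed by the $y^{-1-2j}$ term.
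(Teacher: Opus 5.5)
\textbf{There is a genuine gap in Step 3.} Your Steps 1 and 2 broadly match the paper's route: the trace equals a constant times the lift of $R^j_{\kappa-2j}$ of the harmonic Poincar\'e series, and the vanishing cusp space makes that series weakly holomorphic.

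The unfolding against $\ell$ (Borcherds' Theorem 7.1, or Bruinier's Theorems 2.14--2.15 as in the paper) only yields an expansion near the cusp, for $y>\sqrt{2m_0}$. In that range no semicircle $S_Y$ with $Q(Y)=-D/4$ has $\tau$ in its bounded component (Lemma \ref{lem:finiteness_geodesics}), and for non-square $D$ the expansion is just a multiple of $y^{-1-2j}$. So the $\sinh$-terms cannot ``collapse out'' of the unfolded integrals. They must be produced by continuing across the walls $S_Y$, and your mechanism for this does not work:
\begin{itemize}
\item There is no identity $\Delta\Phi^j=\mathrm{const}\cdot\Phi^{j-1}$. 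Off the walls, $\Psi^j(f,\cdot)$, like $\tr_D G_{2+2j}(\cdot,\cdot)$ itself, is an eigenfunction of $\Delta$ with eigenvalue $(2+2j)(-1-2j)$. The Laplacian does not lower $j$.
\item The lifts for $j$ and $j-1$ have inputs of different weights, whose existence needs different cusp-space hypotheses, so nothing links them.
\item Even granting some second-order equation in each chamber, a single constant of integration cannot determine its solution.
\end{itemize}

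\textbf{What is missing is the singularity-matching argument} used in the proof of Theorem \ref{thm:finite_sum_representation}.
\begin{enumerate}
\item Borcherds' Theorem 6.2 applied to $R^j_{\kappa-2j}f$ gives the local singularity of $\Psi^j(f,\cdot)$ along each $Y^\perp$. It is an explicit combination of $|(X_1(P),Y)|^{1+2r}$, i.e.\ of $\sinh^{1+2r}d(P,S_Y)$ (Proposition \ref{prop:singularities_of_lift}).
\item The candidate formula keeps only the representative of $\pm Y$ with $a_Y<0$, and uses $\max\{0,(X_1(P),Y)\}^{1+2r}$ with doubled weight. It has the same singularities, because $|t|^{1+2r}-2\max\{0,t\}^{1+2r}=-t^{1+2r}$ is analytic. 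This is the real origin of the one-sided condition $a(x^2+y^2)+bx+c>0>a$.
\item The coefficients $\binom{j}{r}\frac{(j+3/2)_r}{(3/2)_r}$ come directly from $(-1)^r(j+\frac32)_r\Gamma(-\frac12-r)=-2\sqrt{\pi}\,\frac{(j+3/2)_r}{(3/2)_r}$, not from an induction.
\item The difference is then real-analytic on all of $\H$ and vanishes for large $y$, hence vanishes identically.
\end{enumerate}

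\textbf{Smaller points.}
\begin{itemize}
\item The lift to $K$ gives a term linear in $y$, proportional to $\#\{Y\in K+\mu: Q(Y)=-D/4\}$. It does not give the $y^{-1-2j}$ term, which is the $\lambda=0$ ($\zeta(2+2j)$) term of the $n$-sum.
\item You never use that $D$ is non-square. That hypothesis is exactly what kills the linear term and excludes vertical geodesics ($a=0$) with their $\mathbb{B}_{2+2r}((Y,x))$ contributions.
\item In Step 2 there is no Eisenstein ambiguity. The $\xi$-image of the Poincar\'e series lies in $S_{\frac52+2j,L^-}=\{0\}$, so the series is itself weakly holomorphic. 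In negative weight it is determined by its principal part, and its coefficients are rational by McGraw.
\item Your finiteness bound should read $|a|y\le\sqrt{D}/2$, not $|a|y^2<\sqrt{D}/2$.
\end{itemize}
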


\begin{remark}
    \begin{enumerate}
        \item For $[a,b,c]\in \mathcal{Q}_D$ with $D>0$ and $a\neq 0$ the set $\H\setminus S_{[a,b,c]}$ has one bounded connected component and one unbounded connected component with respect to the Euclidean distance in $\H$. For $\tau=x+iy\in \H$ the condition $a(x^2+y^2)+bx+c>0>a$ means that $\tau$ lies in the bounded component of $\H\setminus S_{[a,b,c]}$. The finiteness of the double sum in Theorem \ref{thm:1.1} follows from the fact that for every fixed $\tau\in \H$ there are only finitely many $[a,b,c]\in \mathcal{Q}_D$ satisfying such a condition (see Lemma \ref{lem:finiteness_geodesics} in Section \ref{sec:finite_sum_resps}).
        \item Given $j\in \N_0$ and $D>0$ a non-square discriminant, our formula for $\mathrm{tr}_{D}\,G_{2+2j}(\tau,\cdot)$ in Theorem \ref{thm:1.1} still holds under the more general assumption that the $D$-th Fourier coefficient of every form in $ S_{\frac{5}{2}+2j}^+(\Gamma_0(4))$ vanishes. This condition implies that there exists a weakly holomorphic modular form $f\in M^!_{-\frac{1}{2}-2j}(\Gamma_0(4))$, satisfying Kohnen's plus condition, with Fourier expansion $f(\tau)=q^{-D}+a_f(0)+O(q)$ and rational Fourier coefficients. The constant $C$ appearing in  Theorem \ref{thm:1.1} is 
        $$C=   \frac{a_f(0)}{D^{\frac{1}{2}+j}}\frac{2^{2j}j!(j+\frac{3}{2})_{j}}{(2+2j)!}  B_{2+2j},$$
        where $B_n$ denotes the $n$-th Bernoulli number.
        %in order to present a simplified version of our main result. In general, we take linear combinations of traces with coefficients coming from rational relations for $S_{\frac{5}{2}+2j}^+(\Gamma_0(4))$ (or equivalently, coming from principal parts of weakly holomorphic modular forms of weight $-\frac{1}{2}-2j$ satisfying Kohnen's plus condition). See 
    \end{enumerate}
\end{remark}

For every $\tau=x+iy\in \H$ and $[a,b,c]\in \mathcal{Q}_D$ with $D>0$ we have
$$\sinh d(\tau,S_{[a,b,c]})=\frac{|a(x^2+y^2)+bx+c|}{y\sqrt{D}}.$$
%(see Corollary \ref{cor-distance} in Section \ref{sec:points_and_geodesic_hypersurfaces}). 
Hence, as a direct consequence of Theorem \ref{thm:1.1} we obtain the following.

\begin{theorem}\label{2.1}
Let $j\in \N_0$ and assume $S_{\frac{5}{2}+2j}^+(\Gamma_0(4))=\{0\}$. Then, for every non-square discriminant $D>0$ and every CM point $\tau\in \H$ of discriminant $d<0$ we have
    \begin{align*}
        \tr_D\,G_{2+2j}(\tau,\cdot)\in  \sqrt{D|d|}\, \Q.
    \end{align*}
\end{theorem}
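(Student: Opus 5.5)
We derive this directly from Theorem \ref{thm:1.1} by inserting the CM point and checking that every term on the right-hand side lies in $\sqrt{D|d|}\,\Q$. Let $\tau=\tau_X=x+iy$ with $X\leftrightarrow[a_0,b_0,c_0]\in\mathcal{Q}_d$. Then
\[
x=-\frac{b_0}{2a_0}\in\Q, \qquad y=\frac{\sqrt{|d|}}{2|a_0|}\in\sqrt{|d|}\,\Q^\times,
\]
so $y^2=|d|/(4a_0^2)\in\Q$.

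We first treat the constant term. Theorem \ref{thm:1.1} gives $C\in\sqrt{D}\,\Q$. Writing $y^{-1-2j}=y^{-1}\cdot(y^{2})^{-j}$ and using $y^{-1}=2|a_0|/\sqrt{|d|}=2|a_0|\sqrt{|d|}/|d|\in\sqrt{|d|}\,\Q$, we get $y^{-1-2j}\in\sqrt{|d|}\,\Q$. Hence $C/y^{1+2j}\in\sqrt{D|d|}\,\Q$.

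Next we treat the finite double sum. For each $[a,b,c]\in\mathcal{Q}_D$ in the sum, the displayed distance formula gives
\[
\sinh d(\tau,S_{[a,b,c]})=\frac{|a(x^2+y^2)+bx+c|}{y\sqrt{D}}.
\]
The numerator $N:=|a(x^2+y^2)+bx+c|$ is rational, since $x,y^2\in\Q$. Therefore $\sinh d=N/(y\sqrt D)$, and
\[
\sinh^{1+2r}d=\frac{N^{1+2r}}{y\sqrt D}\cdot\bigl(y^2D\bigr)^{-r}.
\]
The factor $(y^2D)^{-r}$ is rational. The factor $1/(y\sqrt D)=y^{-1}\sqrt D/D$ lies in $\sqrt{|d|}\,\Q\cdot\sqrt D\,\Q=\sqrt{D|d|}\,\Q$. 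The coefficients $\binom{j}{r}(j+\frac32)_r/(\frac32)_r$ are rational. So each summand lies in $\sqrt{D|d|}\,\Q$.

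Since Theorem \ref{thm:1.1} states that the double sum is finite, the sum of these terms remains in the $\Q$-vector space $\sqrt{D|d|}\,\Q$. This gives $\tr_D\,G_{2+2j}(\tau,\cdot)\in\sqrt{D|d|}\,\Q$.

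There is no real obstacle here. The only points needing care are that $y^2$ and $x$ are rational, and that the odd powers $1+2r$ contribute exactly one factor of $1/(y\sqrt D)$ each. Both of these are immediate from the explicit form of $\tau_X$ and the distance formula.
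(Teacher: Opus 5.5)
Your proposal is correct and follows the paper's own argument. The paper deduces this theorem directly from Theorem~\ref{thm:1.1} together with the formula $\sinh d(\tau,S_{[a,b,c]})=\frac{|a(x^2+y^2)+bx+c|}{y\sqrt{D}}$, and your bookkeeping ($x,y^2\in\Q$, $y^{-1}\in\sqrt{|d|}\,\Q$, one factor $1/(y\sqrt{D})$ per odd power $1+2r$, and the term $C/y^{1+2j}$) supplies exactly the details the paper leaves implicit.
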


\begin{example}
Choose $j=0$ and $D=5$ in which case $\tr_D$ has only one summand corresponding to the quadratic form $Y_0=[-1,1,1]$. There is a unique form $f\in M_{-\frac{1}{2}}^{!}(\Gamma_0(4))$ with Fourier expansion $f(\tau)=q^{-5}+48+O(q)$. Then, for every $\tau=x+iy\in \H$ we get
\begin{align*}
       \tr_5\, G_2(\tau,\cdot) =  \int_{\Gamma_{Y_0}\backslash S_{Y_0}}G_{2}(\tau,\tau_2)\d \sigma(\tau_2)
       =\frac{4\sqrt{5}}{5y}- 2\sum_{\substack{[a,b,c]\in \mathcal{Q}_5\\  a(y^2+x^2)+bx+c>0 >a}}\sinh d(\tau,S_{[a,b,c]}).
    \end{align*}
    We can compute the finite sum on the right hand side explicitly for any $\tau \in \H$. Since  $\tr_5\, G_2(\tau,\cdot) $ is $\Gamma$-invariant, we can assume that $\tau$ is in $\mathcal{F}$, the standard fundamental domain for $\PSL_2(\Z)$. Note that the only two geodesics $S_{[a,b,c]}$ crossing $\mathcal{F}$ with $[a,b,c]\in \mathcal{Q}_5$ are $S_{[-1,1,1]}$ and $S_{[-1,-1,1]}$; see Figure \ref{fig:D=5}. 

\begin{figure}[h]
\centering
\begin{tikzpicture}[scale=2.5]
  % Clip to upper half plane
  \clip (-2, 0) rectangle (2, 2.2);
  % Shade the fundamental domain
  % The domain: -1/2 <= x <= 1/2, and outside the unit circle
    \fill[blue!10]
    (-0.5, 2.2) -- (-0.5, 0.8660147)
    arc[start angle=120, end angle=60, radius=1]
    -- (0.5, 2.2) -- cycle;
  % Actually, let's construct the exact fundamental domain
  % F = { z : |z| >= 1, -1/2 <= Re(z) <= 1/2 }
  %\fill[blue!10, even odd rule]
    % Outer rectangle covering the region
  %  (-0.5, 0) rectangle (0.5, 2.2)
    % Inner unit disk to subtract
  %  (0,0) circle (1);

  % The unit circle arc (|z| = 1) bounding the domain below
  \draw[thick, red, domain=60:120, samples=100]
    plot ({cos(\x)}, {sin(\x)});

  % Vertical line x = -1/2
  \draw[thick, red] (-0.5, 0.866) -- (-0.5, 2.2);

  % Vertical line x = 1/2
  \draw[thick, red] (0.5, 0.866) -- (0.5, 2.2);

  % Arc 1: x^2 + y^2 - x - 1 = 0
  % => (x - 1/2)^2 + y^2 = 5/4
  % Center (1/2, 0), radius sqrt(5)/2
  % This arc passes through (1/2, 1) and (-1/2, ...)
  % Let's find intersections with the fundamental domain
  % At x = -1/2: 1/4 + y^2 + 1/2 - 1 = 0 => y^2 = 1/4 => y = 1/2
  % At x = 1/2: 1/4 + y^2 - 1/2 - 1 = 0 => y^2 = 5/4 => y = sqrt(5)/2
  \draw[thick, green!60!black, domain=0:180, samples=100]
    plot ({0.5 + 1.118*cos(\x)}, {1.118*sin(\x)});

  % Arc 2: x^2 + y^2 + x - 1 = 0
  % => (x + 1/2)^2 + y^2 = 5/4
  % Center (-1/2, 0), radius sqrt(5)/2
  \draw[thick, orange, domain=0:180, samples=100]
    plot ({-0.5 + 1.118*cos(\x)}, {1.118*sin(\x)});

  % Axes
  \draw[->] (-2.2, 0) -- (2.2, 0) node[right] {};
  \draw[->] (0, 0) -- (0, 2.2) node[above] {};

  % Labels
  %\node at (0.75, 0.3) {$|z|=1$};
  \node at (1.3, 1.1) [green!60!black] {$S_{[-1,1,1]}$};
  \node at (-1.3, 1.1) [orange] {$S_{[-1,-1,1]}$};
  %\node at (-0.5, -0.1) [below] {$-\frac{1}{2}$};
  %\node at (0.5, -0.1) [below] {$\frac{1}{2}$};
  \node at (0, -0.1) [below] {$0$};
  \node at (0.2, 2) {$\mathcal{F}$};
\end{tikzpicture}
\caption{Standard fundamental domain $\mathcal{F}$ for $\PSL_2(\Z)$ and the only two geodesics $S_{[a,b,c]}$  crossing $\mathcal{F}$ with $[a,b,c]\in \mathcal{Q}_5$.}
\label{fig:D=5}
\end{figure}
% Letting
% \begin{eqnarray*}
%     \mathcal{R}_{[-1,1,1]}&:=\{\tau=x+iy\in \mathcal{F}:-(x^2+y^2)+x+y>0\},\\
%     \mathcal{R}_{[-1,-1,1]}&:=\{\tau=x+iy\in \mathcal{F}:-(x^2+y^2)-x+y>0\},\\
% \end{eqnarray*}
The geodesics $S_{[-1,1,1]}$ and $S_{[-1,-1,1]}$ induce a decomposition of $\mathcal{F}$ into three regions according to the conditions $-(x^2+y^2)+x+1>0$ (region $\mathcal{R}_1$), $-(x^2+y^2)-x+1>0$ (region $\mathcal{R}_2$), and the complement of these two in $\mathcal{F}$ (region $\mathcal{R}_3$); see Figure \ref{fig:2}.

\begin{figure}[h]
\centering
\begin{tikzpicture}[scale=3]
  % Clip to upper half plane
  \clip (-1, 0.7) rectangle (1, 2.2);
  % Define the three regions of the fundamental domain
  % R1: left of the orange geodesic
    \fill[blue!10]
    (-0.5, 2.2) -- (-0.5, 0.8660147)
    arc[start angle=120, end angle=60, radius=1]
    -- (0.5, 2.2) -- cycle;

  % R2: between the two geodesics (inside the central region)
  % \fill[green!20]
  %   (0, 1)
  %   arc[start angle=90, end angle=60, radius=1]
  %   -- (0.5, 0.8660147)
  %   arc[start angle=60, end angle=0, radius=1.118]
  %   -- (0.5 + 1.118, 0)
  %   -- (-0.5 - 1.118, 0)
  %   arc[start angle=180, end angle=90, radius=1.118]
  %   -- (0, 1) -- cycle;

  % R3: right of the green geodesic
  % \fill[red!20]
  %   (0.5, 2.2) -- (0.5, 0.8660147)
  %   arc[start angle=60, end angle=90, radius=1]
  %   -- (0, 1)
  %   arc[start angle=90, end angle=0, radius=1.118]
  %   -- (0.5, 2.2) -- cycle;

  % Redraw the unit circle arc bounding the domain below
  \draw[thick, red, domain=60:120, samples=100]
    plot ({cos(\x)}, {sin(\x)});

  % Vertical boundaries x = -1/2 and x = 1/2
  \draw[thick, red] (-0.5, 0.866) -- (-0.5, 2.2);
  \draw[thick, red] (0.5, 0.866) -- (0.5, 2.2);

  % Geodesic 1: x^2 + y^2 - x - 1 = 0
  \draw[thick, green!60!black, domain=0:180, samples=100]
    plot ({0.5 + 1.118*cos(\x)}, {1.118*sin(\x)});

  % Geodesic 2: x^2 + y^2 + x - 1 = 0
  \draw[thick, orange, domain=0:180, samples=100]
    plot ({-0.5 + 1.118*cos(\x)}, {1.118*sin(\x)});

  % Dashed partition boundaries inside F
  \draw[dashed, thick, green!60!black, domain=90:180, samples=100]
    plot ({0.5 + 1.118*cos(\x)}, {1.118*sin(\x)});
  \draw[dashed, thick, orange, domain=0:90, samples=100]
    plot ({-0.5 + 1.118*cos(\x)}, {1.118*sin(\x)});

  % Axes
  \draw[->] (-2.2, 0) -- (2.2, 0);
  \draw[->] (0, 0) -- (0, 2.2);

  % Labels for the three regions
  \node at (-0.385, 1.03) {$\mathcal{R}_2$};
  \node at (0, 1.5) {$\mathcal{R}_3$};
  \node at (0.39, 1.03) {$\mathcal{R}_1$};

  % Labels for the geodesics
  %\node at (1.3, 1.1) [green!60!black] {$S_{[-1,1,1]}$};
  %\node at (-1.3, 1.1) [orange] {$S_{[-1,-1,1]}$};

  % Other labels
  %\node at (0, -0.1) [below] {$0$};
  %\node at (0.2, 2) {$\mathcal{F}$};
\end{tikzpicture}
\caption{The fundamental domain \(\mathcal{F}\) partitioned into three regions \(\mathcal{R}_1, \mathcal{R}_2, \mathcal{R}_3\) by the geodesics $S_{[-1,-1,1]}$ and $S_{[-1,1,1]}$.}
\label{fig:2}
\end{figure}

For $\tau=x+iy\in \mathcal{F}$ we have
$$\sum_{\substack{[a,b,c]\in \mathcal{Q}_5\\  a(y^2+x^2)+bx+c>0 >a}}\sinh d(\tau,S_{[a,b,c]})
=\begin{cases}
    \frac{-(x^2+y^2)+x+1}{y\sqrt{5}} & \text{if }\tau \in \mathcal{R}_1,\\
    \frac{-(x^2+y^2)-x+1}{y\sqrt{5}} & \text{if }\tau \in \mathcal{R}_2,\\
     0 &\text{if }\tau \in \mathcal{R}_3.
\end{cases}$$
Choosing $\tau$ as the CM points $2i, i\in \mathcal{R}_3$ and $\frac{1+i\sqrt{3}}{2}\in\mathcal{R}_1$, corresponding to $[1,0,4],[1,0,1]$ and $[1,-1,1]$ of discriminant $d=-16,-4$ and $-3$, respectively, we get
    \begin{align*}
       \tr_5 G_2(2i,\cdot)=\frac{2\sqrt{5}}{5}, \quad \tr_5\, G_2(i,\cdot) =\frac{4\sqrt{5}}{5} , \quad \tr_5 \,G_2\left(\frac{1+i\sqrt{3}}{2},\cdot\right)=\frac{2\sqrt{15}}{5}.
    \end{align*}
    The decimal expansions of these numbers are $0.8944\ldots$, $1.7888\ldots,$ and $1.5491\ldots$, respectively. Approximating the Green's function $G_2(\tau_1,\tau_2)$ using only the 2042 elements of $\PSL_2(\Z)$ with entries bounded by 20 in absolute value gives 
    \begin{align*}
       \tr_5 G_2(2i,\cdot)\approx 0.8912, \quad \tr_5\, G_2(i,\cdot) \approx 1.7873 , \quad \tr_5 \,G_2\left(\frac{1+i\sqrt{3}}{2},\cdot\right)\approx 1.5466.
    \end{align*}
\end{example}

\bigskip

\noindent \emph{The general case.}
Next, we consider the general case of  $\H^q$ and the pairs of points and geodesic hypersurfaces. 

Let $L$ denote an even lattice of signature $(1,q)$ with quadratic form $Q$ and associated bilinear form $(\cdot,\cdot)$. We further assume that $L$ splits a hyperbolic plane over $\Z$, meaning that there exist isotropic vectors $\ell,\ell'\in L$ with $(\ell,\ell')=1$ and a negative definite sublattice $K$ of rank $q-1$ of $L$ such that  $L=(\Z \ell \oplus \Z \ell') \oplus^{\perp} K$ where $\oplus^\perp$ denotes orthogonal sum. Note that $L'=(\Z \ell \oplus \Z \ell') \oplus^{\perp} K'$ and $L'/L\simeq K'/K$.

Let $W:=K\otimes \R$ and $V:=L\otimes \R=\R \ell\oplus\R\ell'\oplus W$. A general element $X\in V$ will be written as $X=a\ell'+c\ell +b$ with $a,c\in \R$ and $b\in W$. The space
$$\H_W:= W\times \R_{>0}=\{P=(x,y):x\in W, y>0\}$$
is a $q$-dimensional hyperbolic space. Choosing an orthogonal basis $\{w_1,\ldots,w_{q-1}\}$ of $W$ with $Q(w_i)=-1$ for all $i$, gives the \emph{upper half-space model}
\[\H_W\simeq \H^q:=\{(x_1,\ldots,x_{q-1},y)\in \R^q:y>0\}.\]
We can identify $\H_W$ with $\Gr(L)=\Gr(V)$, the Grassmannian of positive definite lines in $V$, as follows: for every $P=(x,y)\in \H_W$ let
$$X_1(P):=\frac{1}{\sqrt{2}y}x+\frac{1}{\sqrt{2}y}\ell'+\left(\frac{y^2-Q(x)}{\sqrt{2}y}\right)\ell\in V.$$
Then $Q(X_1(P))=1$ and the map $P\mapsto \R X_1(P)$ defines our identification $\H_W\simeq \Gr(L)$. 

Every $X=a\ell'+c\ell+b\in V$ with  $Q(X)>0$ defines a point $P_X\in \H_W$ given by
\begin{equation*}
P_X:=\left(\frac{b}{a},\frac{\sqrt{Q(X)}}{|a|}\right)
\end{equation*}
and every $Y=a\ell'+c\ell+b\in V$ with  $Q(Y)<0$ defines a geodesic hypersurface $S_Y\subset  \H_W$ given by
\begin{equation*}
    S_Y:=\{(x,y)\in \H_W: a(y^2-Q(x))+(b,x)+c=0\}.
\end{equation*}

The special orthogonal group $\SO(V)$ has two connected components. We denote by $\SO(V)^+$ the connected component of the identity and define $\SO(L)^+:= \SO(L)\cap \SO(V)^+$. Let $\Gamma$ be a finite-index subgroup of $\SO(L)^+$ acting trivially on $L'/L$. Then $\Gamma$ acts discontinuously on $\Gr(V)\simeq \H_W$ and this action preserves orientation. 

For $s\in \C$ with $\Re(s)>1$ and $P,Q\in \H_W$ we let 
\begin{equation}\label{eq:Green_H_W}
G_s(P,Q)=\sum_{\gamma \in \Gamma}g_s(\cosh \, d(P,\gamma Q))
\end{equation}
be the automorphic Green's function associated to $\Gamma$. Here
$$g_s(t):=(2\pi)^{-\frac{q}{2}}e^{-\pi i \frac{(q-2)}{2}} (t^2-1)^{-\frac{q-2}{4}}Q_{s-2+\frac{q}{2}}^{\frac{q-2}{2}}(t)$$
with $Q_{\nu}^\mu$ the Legendre function of the second kind. The series \eqref{eq:Green_H_W} converges absolutely and uniformly for $(P,Q)$  on compact subsets of $(\H_W\times \H_W)\setminus \{(P,Q): P\equiv Q \text{ mod }\Gamma\}$.

For every $\mu\in L'/L$ and every $m \in \Z-Q(\mu)$ with $m>0$, the group $\Gamma$ acts on the set $L_{\mu,-m}:=\{Y\in L+\mu:Q(Y)=-m\}$ with finitely many orbits. For each $Y\in L_{\mu,-m}$ the action of $\Gamma$ on $\H_W$ restricts to an action of the stabilizer subgroup $\Gamma_Y$ on $S_Y$; the quotient $\Gamma_Y\backslash S_Y$ has finite hyperbolic volume if $q\geq 3$. For $q=2$ this holds if and only if the orthogonal complement of $Y$ in $L\otimes \mathbb{Q}$ is anisotropic. On $S_Y$ we have the Euclidean measure $\d\omega$ and the hyperbolic hypersurface measure
$\d\sigma:=\frac{\d\omega}{y^{q-1}}.$
For every $\mu\in L'/L$, every $m \in \Z-Q(\mu)$ with $m>0$ and every $P\in \H_W$ we define
\begin{equation*}
\tr_{\mu,m}\,G_{s}(P,\cdot)
:=
\sum_{\substack{[Y]\in \Gamma \backslash (L+\mu)\\Q(Y)=-m}}
\int _{\Gamma_Y \backslash S_Y}
G_s(P,Q)\d\sigma(Q).
\end{equation*}
Note that the sum over $[Y]$ is a finite sum. Moreover, we show in Section \ref{sec:traces_Greens}  that each integral in the sum converges absolutely when $\Re(s)>1$ (see Proposition \ref{prop:tracegreenshypergeometric}). Also, note that $\tr_{\mu,m}\,G_{s}(P,\cdot)=\tr_{-\mu,m}\,G_{s}(P,\cdot)$.

\begin{remark}\label{rmk:traces_D_vs_mu,m}
    In the case $q=2$ and $L$ as in \eqref{eq:L-binary} we have $L'/L\simeq \Z/2\Z$. Then $\tr_{D}=\tr_{0,-D/4}$ if $D$ is even and $\tr_{D}=\tr_{1,-D/4}$ if $D$ is odd.
\end{remark}

The following theorem is also consequence of Theorem \ref{thm:main_general} in Section \ref{sec:traces_Greens}.  Here, for every $\kappa\in \frac{1}{2}\Z$ we let $S_{\kappa,L^-}$ denote the space of vector-valued cusp forms  of weight $\kappa\in \frac{1}{2}\Z$ for the Weil representation $\rho_{L^-}$ (see Section \ref{sec:preliminaries} for details).

Recall that $K$ is a negative definite sublattice of $L$ of rank $q-1$, with dual lattice $K'$. Since $L'/L\simeq K'/K$, for every $\mu=(a\ell'+c\ell+b)+L\in L'/L$ we simply write $K+\mu:=K+b$ for the corresponding element in $K'/K$. We also define $L\pm \mu:= (L+\mu)\cup (L-\mu)$ and $K\pm \mu:=(K+\mu)\cup (K-\mu)$.

\begin{theorem}\label{thm:2.1}
Let $j\in \N_0$ and assume $S_{\frac{3+q}{2}+2j,L^-}=\{0\}$. Then, for every $\mu\in L'/L$ and every $m \in \Z-Q(\mu)$ with $m>0$ there exists $C\in \sqrt{m}\,\Q$, depending on $j,\mu$ and $m$, such that for every $P=(x,y)\in \H_W$ we have
\begin{equation*}\begin{split}
           \mathrm{tr}_{\mu,m}\,G_{2+2j}(P,\cdot)&= \frac{C}{y^{1+2j}}+y\frac{\sqrt{m}\,\# \{Y\in K+\mu:Q(Y)=-m\}}{\left(q-1+2j\right)(j+1)}\\
           & \qquad +\frac{j!}{4}\sum_{r=0}^j \frac{\left(j+\frac{1+q}{2}\right)_r}{(j-r)!(2+2r)! m^{\frac{1}{2}+r}y^{1+2r}}\,  \varepsilon_\mu \sum_{\substack{Y\in K\pm \mu \\ Q(Y)=-m}}\mathbb{B}_{2+2r}((Y,x))\\
           & \qquad \qquad -\sum_{r=0}^j {j\choose r}\frac{\left(j+\frac{1+q}{2}\right)_r}{(3/2)_r} \,\varepsilon_\mu \sum_{\substack{Y=a\ell'+c\ell+b\in L\pm \mu \\ Q(Y)=-m\\a(y^2-Q(x))+(b,x)+c>0>a}}\sinh^{1+2r}d(P,S_Y).
\end{split}
    \end{equation*}
Here 
$$\varepsilon_\mu:=\begin{cases} 1 & \text{if }\mu\neq -\mu,\\ 2& \text{if }\mu =-\mu,\end{cases}$$ 
and $\mathbb{B}_{2+2r}(t)$ denotes the $1$-periodic function on $\R$ that agrees with the Bernoulli polynomial $B_{2+2r}(t)$ on $[0,1)$.  All the sums on the right hand side of the above formula are  finite sums.
\end{theorem}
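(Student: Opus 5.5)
Theorem \ref{thm:2.1} will be deduced from the general identity announced for Section \ref{sec:traces_Greens} (Theorem \ref{thm:main_general}) via a regularized theta lift. The plan has three steps: realize the traces as a lift of Maass--Poincar\'e series, replace these by a weakly holomorphic form with a single pole, and then evaluate the lift of that form explicitly.

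\emph{Step 1: traces as theta lifts of Poincar\'e series.} Let $\kappa=\frac{1-q}{2}$. Proposition \ref{prop:tracegreenshypergeometric} expresses $\int_{\Gamma_Y\backslash S_Y}G_s(P,Q)\,\d\sigma(Q)$ through a hypergeometric function of $\sinh d(P,S_Y)$. Unfolding then identifies $\tr_{\mu,m}G_s(P,\cdot)$, up to an explicit gamma factor, with the regularized Siegel theta lift of the harmonic Maass--Poincar\'e series $F_{\mu,-m}(\tau,s)$ of weight $\kappa$ for $\rho_{L^-}$. At $s=2+2j$, the iterated raising operator $R^j_{\kappa-2j}$ converts the lift of a weight $\kappa-2j=-\frac{1}{2}-\frac{q}{2}-2j+1$ object into the Green's function at spectral parameter $2+2j$; this is where the evenness of $s-2$ enters.

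\emph{Step 2: replacing the Poincar\'e series.} Since $S_{\frac{3+q}{2}+2j,L^-}=\{0\}$, Serre duality (Borcherds' obstruction principle) gives a weakly holomorphic form $f\in M^!_{\kappa-2j,L^-}$ with principal part $\frac12(q^{-m}\e_\mu+q^{-m}\e_{-\mu})$. By Galois-equivariance and McGraw's rationality result, $f$ has rational coefficients. Moreover $f$ equals the corresponding combination of Poincar\'e series, since their difference is harmonic with vanishing principal part and hence lies in the zero space of cusp forms of the dual weight $2-(\kappa-2j)$. The factor $\varepsilon_\mu$ records whether $\e_\mu=\e_{-\mu}$. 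The trace is therefore the lift $\Psi^j(f,P)$.

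\emph{Step 3: explicit evaluation.} Following \cite{borcherds} and \cite{BS21}, apply the Borcherds splitting $L=(\Z\ell\oplus\Z\ell')\oplus K$. Poisson summation over the $\ell$-direction rewrites the theta function as a sum over $(c,d)$ against a theta series for $K$, and the regularized lift then decomposes into three kinds of terms.
\begin{enumerate}
\item The $c=d=0$ term lifts $f$ to the lattice $K$ in signature $(0,q-1)$, which is definite. It gives the term linear in $y$, namely the count $\#\{Y\in K+\mu:Q(Y)=-m\}$ times a beta-integral constant. It also gives the $a_f(0)$ term $C/y^{1+2j}$, where $C\in\sqrt m\,\Q$ comes from $\zeta(-1-2j)$, i.e.\ a Bernoulli number.
\item The terms with $n\neq 0$ that involve only $K$ produce the Fourier series $\sum_n n^{-2-2r}e(n(Y,x))$. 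These sum to the periodic Bernoulli functions $\mathbb B_{2+2r}((Y,x))$, with the raising operator generating the finite sum over $r$.
\item The remaining part, coming from $\ell'$-components, is a sum of Bessel-type integrals. Because the weight is negative and half-integral in the relevant sense, these integrals evaluate in closed form as finite sums of powers of $\sinh d(P,S_Y)$. They are nonzero only when $P$ lies inside the hemisphere $S_Y$, i.e.\ when $a(y^2-Q(x))+(b,x)+c>0>a$. Finiteness of this sum is Lemma \ref{lem:finiteness_geodesics}.
\end{enumerate}

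The main obstacle is Step 3(c): justifying the interchange of regularization and summation, and evaluating the resulting integrals exactly to obtain the coefficients $\binom{j}{r}\frac{(j+\frac{1+q}{2})_r}{(3/2)_r}$. For the latter, I would first compute the case $j=0$ directly. I would then show that $R^j$ acts on $\sinh^{1+2r}$ terms through an explicit recursion, and verify that the coefficients match by induction on $j$.
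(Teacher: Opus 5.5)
Your Steps 1 and 2 agree with the paper: Theorem \ref{thm:traces_Greens_equals_theta_lift}, Corollary \ref{cor:linear_combinatio_traces_is_theta_lift}, and Proposition \ref{prop:rational_relations_define_weakly_holomorphic_forms} with McGraw. One convention should be fixed: $f\in M^!_{\kappa-2j,L}$, not $L^-$, since the obstruction space is $S_{2-\kappa+2j,L^-}$. Given the explicit formula, the $\varepsilon_\mu$-bookkeeping is then just Theorem \ref{thm:main_general} for this $f$.

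\textbf{The gap is Step 3(c).} The Borcherds/Bruinier--Schwagenscheidt unfolding at the cusp $\ell$ has no third kind of term. After Poisson summation, the theta function is a sum over $(c,d)\in\Z^2$ of theta functions of $K$. The term $(c,d)=(0,0)$ is your (a). Every term with $(c,d)\neq(0,0)$ unfolds via $\Gamma_\infty\backslash\SL_2(\Z)$ into the Fourier series $\sum_{n\neq 0}n^{-2-2r}e(n(Y,x))$ of (b), including $Y=0$.

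More importantly, the interchange of the regularized integral and the sum, which you flag as the main obstacle, is valid only near the cusp: for $y>\sqrt{2m_0}$, as in Proposition \ref{prop:Fourier_expansion_lift}. By Lemma \ref{lem:finiteness_geodesics}, no point with $y>\sqrt{m_0}$ lies inside a hemisphere $S_Y$ with $Y\in L'$ and $0<|Q(Y)|\leq m_0$. So wherever the unfolding is legitimate, the hemisphere sum is identically zero. Below that height the unfolded expression (a)+(b) still makes sense, but it is not the lift. It is real-analytic across every hemisphere $S_Y$ with $a_Y\neq 0$, whereas $\Psi^j(f,\cdot)$ has singularities of type $y^{-1-2r}|a(y^2-Q(x))+(b,x)+c|^{1+2r}$ there. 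The obstacle therefore cannot be overcome, and no ``Bessel-type integral'' in the unfolding produces the terms $\sinh^{1+2r}d(P,S_Y)$.

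\textbf{The missing idea is real-analytic continuation by matching singularities.} This is what the paper does.
\begin{enumerate}
\item Borcherds' Theorem 6.2, applied to $R^j_{\kappa-2j}f$ with the coefficients from \eqref{eq:raising_of_exponential}, gives the local singularity type of the lift along each $S_Y$ (Proposition \ref{prop:singularities_of_lift}).
\item Take the large-$y$ expansion and add the finite sum $\beta$ of terms $y^{-1-2r}\max\{0,a(y^2-Q(x))+(b,x)+c\}^{1+2r}$ over $Y$ with $a<0$.
\item Since $|t|^{1+2r}-2\max\{0,t\}^{1+2r}=-t^{1+2r}$, and $\mathbb{B}_{2+2r}(t)+(r+1)|t|^{1+2r}$ is analytic at $t=0$, the difference with $(4\pi)^{-j-1}\Psi^j(f,\cdot)$ is real-analytic on all of $\H_W$.
\item This difference vanishes for $y>\sqrt{2m_0}$, where $\beta=0$, so it vanishes identically by the identity theorem.
\end{enumerate}

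This argument also gives the coefficients $\binom{j}{r}\frac{(j+\frac{1+q}{2})_r}{(3/2)_r}$ directly, via $(-1)^r(j+1-\kappa)_r\Gamma(-\frac12-r)=-2\sqrt{\pi}\,\frac{(j+\frac{1+q}{2})_r}{(3/2)_r}$. No induction on $j$ is needed. Your proposed recursion of ``$R^j$ acting on $\sinh^{1+2r}$ terms'' is not well posed anyway, since $R^j$ acts on the input in the variable $\tau$, not on functions of $P$.

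A smaller misattribution: the term $C/y^{1+2j}$ does not come from $(c,d)=(0,0)$, which gives only the term linear in $y$. It is the $Y=0$ summand of the Bernoulli sum in (b), arising from $\sum_{n\neq 0}n^{-2-2j}$.
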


\begin{remark}
     \begin{enumerate}
         \item For every $Y=a\ell'+c\ell+b\in L'$ with $m>0$ and $a\neq 0$ the set $\H_W\setminus S_Y$ has one bounded connected component and one unbounded connected component with respect to the Euclidean distance in $\H_W$. For $P=(x,y)\in \H_W$ the condition $a(y^2-Q(x))+(b,x)+c>0>a$ means that $P$ lies in the bounded component of $\H_W\setminus S_{Y}$. For fixed $P\in \H_W$ there are only finitely many $Y\in L'$ with $Q(Y)=-m$ satisfying such  condition by Lemma \ref{lem:finiteness_geodesics} in Section \ref{sec:finite_sum_resps}.
         \item For each $Y\in K\pm \mu$ with $Q(Y)=-m$ the term $\mathbb{B}_{2+2r}((Y,x))$ is, locally around $P$, a polynomial in $y\sinh d(P,S_{\widetilde{Y}})$ where $S_{\widetilde{Y}}$ is any of the vertical hypersurfaces closest to $P$ and parallel to $S_Y$; see Section \ref{sec:geometric-interpretation} for details. 
        \item Theorem \ref{thm:2.1} still holds under the more general assumption that the Fourier coefficient of index $(\mu,m)$ of every vector-valued cusp form in $S_{\frac{3+q}{2}+2j,L^-}$ vanishes. Such a condition implies that there exists a vector-valued weakly holomorphic modular form $f\in M^!_{\frac{1-q}{2}-2j,L}$ with Fourier expansion $f(\tau)=q^{-m}(\e_\mu+\e_{-\mu})+O(1)$ and rational Fourier coefficients. The constant $C$ appearing in  Theorem \ref{thm:2.1} is 
        $$C=   \frac{a_f(0,0)}{m^{\frac{1}{2}+j}}\frac{j!(j+\frac{1+q}{2})_{j}}{4(2+2j)!}  B_{2+2j},$$
        where $a_f(0,0)$ denotes the Fourier coefficient of $f$ of index $(0,0)\in L'/L\times \Q$.
        \item Theorem \ref{thm:1.1} follows from Theorem \ref{thm:2.1} by choosing $q=2$, $L$ as in \eqref{eq:L-binary}, $m=D/4$ and $\mu=0$ or $1$ depending on whether $D$ is even or odd (see Remark \ref{rmk:traces_D_vs_mu,m}). Then $(K',Q)\simeq (\frac{1}{2}\Z,-x^2)$ and since $D$ is not a square there are no vectors $Y\in K\pm \mu$ satisfying $Q(Y)=-m$. Here we also use that $S_{\frac{5}{2}+2j,L^-}\simeq S^+_{\frac{5}{2}+2j}(\Gamma_0(4))$ 
        via $f_0(\tau)\e_0+f_1(\tau)\e_1\mapsto f_0(4\tau)+f_1(4\tau)$.% defines an isomorphism between $M_{\frac{1-q}{2}-2j,L}^!$ and the subspace of forms in $M^{!}_{\frac{1-q}{2}-2j}(\Gamma_0(4))$ satisfying Kohnen's plus condition.
     \end{enumerate}
 \end{remark}

For every $P=(x,y)\in \H_W$ and $Y=a\ell'+c\ell+b\in V$ with $Q(Y)=-m<0$ we have
$$\sinh d(P,S_Y)=\frac{|a(y^2-Q(x))+(b,x)+c|}{2y\sqrt{m}}$$
by Corollary \ref{cor-distance} in Section \ref{sec:points_and_geodesic_hypersurfaces}. If $P=P_X$ with $X\in L\otimes \Q$, then $P$ is called a \emph{special point}. As a direct consequence of Theorem \ref{thm:2.1} we obtain the following  result.

\begin{theorem}\label{thm:2.2}
Let $j\in \N_0$ and assume $S_{\frac{3+q}{2}+2j,L^-}=\{0\}$. Then, for every $\mu\in L'/L$, every $m \in \Z-Q(\mu)$ with $m>0$ and every $X\in L\otimes \Q$ with $Q(X)>0$, we have
    \begin{align*}
        \tr_{\mu,m}\,G_{2+2j}(P_X,\cdot)\in  \sqrt{mQ(X)}\, \Q.
    \end{align*}
\end{theorem}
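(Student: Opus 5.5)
We plan to deduce the statement directly from Theorem \ref{thm:2.1}: evaluate each term of that formula at $P=P_X=(x,y)$ and show that every term lies in $\sqrt{mQ(X)}\,\Q$. Write $X=a_0\ell'+c_0\ell+b_0$ with $a_0,c_0\in\Q$ and $b_0\in K\otimes\Q$. Since $Q(X)>0$ we have $a_0\neq 0$, so $x=b_0/a_0\in K\otimes\Q$ and $y=\sqrt{Q(X)}/|a_0|$. Hence $y\in\sqrt{Q(X)}\,\Q^\times$ and $y^2\in\Q$. Also $Q(x)\in\Q$, and $(Y,x)\in\Q$ for all $Y\in K'$.

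We treat the four terms of Theorem \ref{thm:2.1} in turn.
\begin{enumerate}
\item The first term is $C/y^{1+2j}$ with $C\in\sqrt m\,\Q$. Since $y^{1+2j}=y\cdot y^{2j}$ and $y^{2j}\in\Q^\times$, we have $1/y^{1+2j}\in \sqrt{Q(X)}\,\Q$, so this term lies in $\sqrt{mQ(X)}\,\Q$.
\item The second term is $y\sqrt m$ times a rational number, which is clearly in $\sqrt{mQ(X)}\,\Q$.
\item In the third term, $\mathbb{B}_{2+2r}((Y,x))$ is rational, because $B_{2+2r}$ has rational coefficients and the fractional part of a rational number is rational. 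The prefactor $1/(m^{1/2+r}y^{1+2r})$ equals $\sqrt m\,\sqrt{Q(X)}$ times a rational number, since $m\in\Q$ and $y^2\in\Q$. The Pochhammer symbols and factorials are rational, so this term also lies in $\sqrt{mQ(X)}\,\Q$.
\item For the last term we use the distance formula (Corollary \ref{cor-distance}):
\[
\sinh d(P,S_Y)=\frac{|a(y^2-Q(x))+(b,x)+c|}{2y\sqrt m}.
\]
The numerator is rational for $Y\in L'$, so $\sinh d(P,S_Y)\in \frac{1}{y\sqrt m}\Q$. Therefore
\[
\sinh^{1+2r}d(P,S_Y)\in \frac{1}{(y\sqrt m)^{1+2r}}\,\Q=\sqrt{mQ(X)}\,\Q,
\]
because $(y\sqrt m)^2=y^2m\in\Q^\times$. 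The coefficients $\binom{j}{r}(j+\frac{1+q}{2})_r/(3/2)_r$ are rational, and the sum is finite by Lemma \ref{lem:finiteness_geodesics}.
\end{enumerate}

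Summing the four terms gives $\tr_{\mu,m}G_{2+2j}(P_X,\cdot)\in\sqrt{mQ(X)}\,\Q$. There is no genuine obstacle once Theorem \ref{thm:2.1} is available. The only points needing care are the parity bookkeeping of odd powers of $y\sqrt m$ in each term, and the observation that $x$ lies in $K\otimes\Q$ for a special point, which is what makes the arguments of the periodic Bernoulli functions rational.
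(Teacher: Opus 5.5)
Your proposal is correct and follows the paper's route. The paper also presents this as a direct consequence of Theorem \ref{thm:2.1} combined with the distance formula of Corollary \ref{cor-distance}, checking term by term that each contribution lies in $\sqrt{mQ(X)}\,\Q$ at a special point; in the body, Theorem \ref{thm:algebraicity_traces} is proved the same way from the finite sum formula of Theorem \ref{thm:finite_sum_representation}.
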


The method of proof of our results makes use of the machinery of Siegel theta lifts in signature $(1,q)$. More explicitly, the proofs consist of the following major steps.

\begin{enumerate}
    \item We integrate the automorphic Green's function $G_s(P,Q)$ over geodesic hypersurfaces, in the variable $Q$,  and obtain a formula for $\tr_{\mu,m} G_{2+2j}(P, \cdot)$ in terms of an infinite sum of hypergeometric functions.
    \item We show that the trace $\tr_{\mu,m} G_{2+2j}(P, \cdot)$ is the regularized theta lift of $R_\kappa^j F_{\mu,m}$ where $F_{\mu,m}$ is a harmonic Maass--Poincar\'e series of weight $\frac{1-q}{2}-2j$ and $R_\kappa^j=R_{\kappa+2j-2}\circ \cdots \circ R_\kappa$ denotes the iterated version of the raising operator (see \eqref{eq:lowering_and_raising_operators} in Section \ref{sec:HarmonicMaassForms}). %To this end we unfold  $F_{\mu,m,\kappa,s^*}$ and observe that the obtained infinite sum of hypergeometric functions can also be written in terms of the integral of the Green's function over hypersurfaces.
    \item We prove a general finite formula for the evaluation of the regularized theta lift of $R^jf$, valid at any point $z\in \Gr(L)$, for every weight $\frac{1-q}{2}-2j$ weakly holomorphic vector-valued modular form $f$.
    %\item Using the method of \cite{borcherds} and \cite{BS21}, we calculate the theta lift by unfolding the theta kernel. In the case that the input $F_m$ is a weakly holomorphic form, this leads to a finite formula expression for the evaluation of the lift at any point.
    \item When $S_{\frac{3+q}{2}+2j,L^-}=\{0\}$ the form $f=F_{\mu,m}$ is weakly holomorphic and has rational Fourier coefficients. Combining the previous steps, we deduce our finite formula for $\tr_{\mu,m} G_{2+2j}(P, \cdot)$, valid for any $P\in \H_W$, when $S_{\frac{3+q}{2}+2j,L^-}=\{0\}$. From this, we deduce  both an interpretation of these traces in terms of hyperbolic distances to geodesic hypersurfaces and our algebraicity result for the evaluation at special points.
\end{enumerate}

Our results on the algebraicity of the values at special points of the traces of Green's functions over geodesic hypersurfaces can also be obtained using the splitting of the theta kernel, as was done for point-point pairs in previous works; see, e.g., \cite{BEY21} and \cite{HIPS25}. In Section \ref{sec:theta_splitting} we briefly present such a computation using results of Males \cite{males} and Li and Schwagenscheidt \cite{LS22}.

In the case $q=2$, the regularized theta lift occurring in step (2) also shows up in the study of cycle integrals of certain meromorphic modular forms attached to positive definite binary quadratic forms, notably in \cite{ABMS21}. In Section \ref{sec:cycle_integral_meromorphic_mod_forms} we explain this connection and show how to use the results in this paper to give an alternative proof of some of the algebraicity results for such meromorphic modular forms.

This paper is organized as follows. Section \ref{sec:preliminaries} covers the necessary preliminaries on Weil representations, harmonic Maass forms, Maass--Poincar\'e series and rationality results for spaces of cusp forms. Section \ref{sec:quad_spaces_(1,q)_hyp_geometry} introduces the geometric setting of quadratic spaces of signature $(1,q)$ and their relationship to hyperbolic geometry. It includes the hyperbolic distance formulas that are used in our computations. Section \ref{sec:Siegel} presents the Siegel theta lift in signature $(1,q)$ and our finite sum representation for the lift of weakly holomorphic modular forms, providing also a geometric interpretation. In Section \ref{sec:Hypersurface integrals of Green's functions} we give basic properties of automorphic Green's functions and compute their integrals over geodesic hypersurfaces. This leads to an evaluation of the traces as an infinite sum of hypergeometric series, and its connection to a theta lift. There we prove our main finiteness and rationality results. Finally, Section \ref{sec:remarks} presents an alternative evaluation of the theta lift at special points and discusses connections to cycle integrals of meromorphic modular forms attached to definite binary quadratic forms.

\subsection*{Acknowledgments} This project was initiated during a research visit by C. Alfes, S. Herrero and J. Funke to work with \"O. Imamo\=glu at ETH Zurich. They thank \"O.~Imamo\=glu for the invitation and the Department of Mathematics of ETH Zurich for providing a stimulating research environment. J. Funke also thanks the Forschungsinstitut f\"ur Mathematik at ETH Zurich for their generous support of many research visits over the years. 

\subsection*{Statement on the use of AI}
All mathematical results in this paper were obtained by the human authors. 
AI was used to identify minor mathematical inconsistencies and typographical errors.

\section{Preliminaries}\label{sec:preliminaries}

In this section we let $\H:=\{z\in \C: \Im(z)>0\}$ denote the usual hyperbolic plane.

\subsection{The Weil representation}\label{Section: Weil representation}
The {\it metaplectic extension $\widetilde{\Gamma} \coloneqq \mathrm{Mp}_2(\Z)$ of $\SL_2(\Z)$} is defined as
\begin{equation*}
\widetilde{\Gamma} \coloneqq \left\{ (\gamma, \phi) \colon \gamma = \left(\begin{matrix}
a & b \\ c & d
\end{matrix}\right)\in \SL_2(\Z), \phi\colon \H \rightarrow \C \text{ holomorphic}, \phi^2(\tau) = c\tau+d  \right\}.
\end{equation*}
It is generated by $\widetilde{T} := \left(\left( \begin{smallmatrix} 1 & 1 \\ 0 & 1 \end{smallmatrix} \right),1\right)$ and $\widetilde{S} := \left(\left( \begin{smallmatrix} 0 & -1 \\ 1 & 0 \end{smallmatrix}\right) ,\sqrt{\tau}\right)$. By $\widetilde{\Gamma}_\infty$ we denote the subgroup generated by
$\widetilde{T}$.

Let $L$ be an even lattice of signature $(p,q)$ with quadratic form $Q$ and associated bilinear form $(\cdot,\cdot)$. Let $L'$ denote its dual lattice, $\C[L'/L]$ be the group ring of $L'/L$ with standard basis elements $\e_{\mu}$ for $\mu \in L'/L$, and let $\langle\cdot,\cdot\rangle$ be the sesquilinear form on $\C[L'/L]$ given by
\[\left\langle\sum_{\mu \in L'/L}a_\mu \e_\mu,\sum_{\mu \in L'/L}b_\mu \e_\mu\right\rangle=\sum_{\mu \in L'/L}a_\mu \overline{b_\mu}.\]

%natural bilinear form on $\C[L'/L]$ given by $\langle\e_{\mu},\e_{\nu}\rangle = \delta_{\mu,\nu}$, which is conjugate-linear in the second variable.

The {\it Weil representation} $\rho_L$ of $\widetilde{\Gamma}$ associated to $L$ acts on $\C[L'/L]$ by
\begin{align*}
\rho_L\left(\widetilde{T}\right)(\e_\mu) \coloneqq e(Q(\mu)) \e_\mu, \qquad
\rho_L\left(\widetilde{S}\right)(\e_\mu) \coloneqq \frac{e\left(\frac18(q-p)\right)}{\sqrt{|L'/L|}} \sum_{\nu \in L'/L} e(-(\nu,\mu)) \e_{\nu}.
\end{align*}
In particular, the element $\widetilde{Z}:=\widetilde{S}^2=\left(\left( \begin{smallmatrix} -1 & 0 \\ 0 & -1 \end{smallmatrix} \right),i\right)$ acts as $\rho_L\left(\widetilde{Z}\right)(\e_\mu)=e\left(\frac14(q-p)\right)\e_{-\mu}.$

The Weil representation $\rho_{L^-}$ associated to the lattice $L^- = (L,-Q)$ gives the \textit{dual Weil representation} associated to $L$.

\subsection{Harmonic Maass forms}\label{sec:HarmonicMaassForms}
Let $\kappa \in \frac{1}{2} \Z$. We define the {\it slash-operator} by
\[
f\mid_{\kappa,\rho_{L}}(\gamma,\phi) (\tau):= \phi(\tau)^{-2\kappa}\rho_{L}^{-1}(\gamma,\phi)f(\gamma\tau),
\]
where $f\colon \H \rightarrow \C[L'/L]$ and $(\gamma,\phi) \in \widetilde{\Gamma}$. In \cite{brfu04} Bruinier and Funke introduced the notion of weak Maass forms. A smooth function $f \colon \H \rightarrow \C[L'/L]$ is called a \emph{harmonic weak Maass form} of weight $\kappa$ with respect to $\rho_L$ if it is annihilated by the \emph{weight $\kappa$ Laplace operator}
\[
\Delta_{\kappa} \coloneqq -v^2\left(\frac{\partial^2}{\partial u^2} + \frac{\partial^2}{\partial v^2} \right) + i\kappa v\left(\frac{\partial}{\partial u} + i\frac{\partial}{\partial v} \right) \qquad (\text{where }\tau=u+iv),
\]
if it is invariant under the slash-operator $\mid_{\kappa,\rho_L}$, and if it has at most exponential growth at the cusp, that is, $f(\tau)=O(e^{Cv})$ as $v\to \infty$ for some constant $C>0$. We denote this space by $H_{\kappa,L}$. We let $M_{\kappa,L}^!$, $M_{\kappa,L}$ and $S_{\kappa,L}$ denote the subspaces of $H_{\kappa,L}$ of weakly holomorphic modular forms, holomorphic modular forms, and cusp forms, respectively.

\begin{remark}\label{rmk:weight}
  Throughout the paper, we assume that $2\kappa\equiv p-q$ (mod 4)\footnote{We will consider lattices $L$ of signature $(1,q)$ and forms of weight $\frac{1-q}{2}-2j$ with $j\in \N_0$, or $L^-$ of signature $(q,1)$ and forms of weight $2-\left(\frac{1-q}{2}-2j\right)$.}. This implies that every $f\in H_{\kappa,L}$ of the form $f=\sum_{\mu \in L'/L}f_\mu \e_\mu$ satisfies
$$f=f|_{\kappa,\rho_L}\widetilde{Z}=\sum_{\mu \in L'/L}f_{-\mu}\e_\mu,$$
hence $f_{\mu}=f_{-\mu}$ for all $\mu \in L'/L$.  
\end{remark}

The antilinear differential operator
\[
\xi_\kappa(f) \coloneqq 2iv^\kappa \overline{\frac{\partial f}{\partial \overline{\tau}}}
\]
maps
\[
\xi_\kappa \colon H_{\kappa,L}\longrightarrow M_{2-\kappa,L^-}^!,
\]
where $L^-$ denotes the lattice $L$ with the quadratic form $-Q$. 

Following Bruinier and Funke, we also consider the subspaces $H_{\kappa,L}^{\text{hol}}$ and $H_{\kappa,L}^{\text{cusp}}$ of harmonic weak Maass forms that map to holomorphic modular forms and cusp forms, respectively, under the $\xi$-operator.

Every $f\in H_{\kappa,L}^{\text{hol}}$ of weight $\kappa\in \frac{1}{2}\Z,\, \kappa\neq1$, splits as a sum $f=f^+ + f^-$ of a \emph{holomorphic} and a \emph{non-holomorphic part} $f^-$, having Fourier expansions of the form
\begin{equation}\label{eq:fourierexpansion}
\begin{split}
f^+(\tau) &= \sum_{\mu \in L'/L} \sum_{\substack{n\in \Z+Q(\mu)\\n \gg -\infty}} a_f^+(\mu,n) e(n\tau) \e_\mu, \\
f^-(\tau) &= \sum_{\mu \in L'/L} \left(a_f^-(\mu,0)v^{1-\kappa}+  \sum_{\substack{n\in \Z+Q(\mu)\\n <0}} a_f^-(\mu,n)\Gamma(1-\kappa, 4\pi |n| v) e(n\tau)\right) \e_\mu,
\end{split}
\end{equation}
where $\Gamma(s,x):= \int_x^\infty t^{s-1}e^{-t} dt$ denotes the incomplete Gamma function. The coefficients $a_f^\pm(\mu,n)$ are invariant under $\mu\mapsto -\mu$ by Remark \ref{rmk:weight}. When $f\in M^!_{\kappa,L}$ we have $f=f^+$ and we write $a_f^+(\mu,n)=:a_f(\mu,n)$.

\begin{definition}\label{def:rationa_principal_part}
    Let $f\in M^!_{\kappa,L}$ with Fourier coefficients $a_f(\mu,n)$. The finite sum
    $$\sum_{\mu \in L'/L} \sum_{\substack{n\in \Z+Q(\mu)\\n <0}} a_f(\mu,n) e(n\tau) \e_\mu$$
    is called the \emph{principal part} of $f$. We say that \emph{$f$ has rational principal part} if $a_f(\mu,n)\in \Q$ for all $\mu \in L'/L$ and all $n\in \Z+Q(\mu)$ with $n<0$.
\end{definition}

We also define the \emph{lowering} and \emph{raising operators}
\begin{equation}\label{eq:lowering_and_raising_operators}
    L_\kappa \coloneqq -2iv^2\frac{\partial}{\partial \overline{\tau}} \text{ and }R_\kappa = 2i\frac{\partial}{\partial \tau}+ \frac{\kappa}{v},
\end{equation}
which lower, respectively raise the weight of a smooth function transforming like a modular form of weight $\kappa$ for $\rho_L$ by two. For $n\in \N_0$ we also let $L_\kappa^n=L_{\kappa-2n+2}\circ \cdots \circ L_\kappa$ and $R_\kappa^n=R_{\kappa+2n-2}\circ \cdots \circ R_\kappa$ denote the corresponding iterated operators.

\subsection{Maass--Poincar\'{e} series}\label{Section: Maass--Poincare series}
Let $\kappa \in \frac{1}{2} \Z$ with $\kappa < 0$ satisfying $2\kappa\equiv p-q$ (mod 4), and denote by $M_{\mu, \nu}$ the usual $M$-Whittaker function (see \cite[Equation~13.1.32]{abramowitz1988handbook}). We define, for $s \in \C$ and $y \in \R \backslash \{0\}$,
\begin{align*}%\label{curlyM}
\mathcal{M}_{\kappa,s} (y) \coloneqq |y|^{-\frac{\kappa}{2}} M_{-\mathrm{sgn}(y) \frac{\kappa}{2}, s-\frac{1}{2}} (|y|).
\end{align*}
Following Bruinier \cite[Section 1.3]{brhabil}, for $\mu \in L'/L$ and $m \in \Z - Q(\mu)$ with $m > 0$ we define the vector-valued Maass--Poincar\'{e} series
\begin{align}\label{eq:PCseries}
F_{\mu, m, \kappa,s}(\tau) \coloneqq \frac{1}{2 \Gamma(2s)} \sum_{(\gamma,\phi) \in \widetilde{\Gamma}_\infty \backslash \widetilde{\Gamma}} \left(\mathcal{M}_{\kappa,s} (4 \pi m v) e(-mu) \e_\mu\right) \mid_{\kappa, \rho_L} (\gamma,\phi)(\tau).
\end{align}
The series converges absolutely for $\Re(s) > 1$, transforms like a modular form of weight $\kappa$ for $\rho_L$, and is an eigenform of the Laplace operator $\Delta_\kappa$ with eigenvalue $s(1-s)+(\kappa^2-2\kappa)/4$. Hence, the special value
\[F_{\mu,m,\kappa}(\tau):=F_{\mu, m, \kappa,1-\frac{\kappa}{2}}(\tau)\]
defines a harmonic weak Maass form in $H^{\text{cusp}}_{\kappa,L}$ whose Fourier expansion starts with 
\[F_{\mu,m,\kappa}(\tau)=e(-m\tau)(\e_{\mu}+\e_{-\mu})+O(1).\]
%for some constant $\mathfrak{c} \in \C[L'/L]$ at the harmonic point $s = 1-\frac{\kappa}{2}$. 
In particular, for $\kappa<0$, every harmonic Maass form $f \in H^{\text{cusp}}_{\kappa,L}$ with Fourier expansion~\eqref{eq:fourierexpansion} can be written as a linear combination
\begin{align}\label{Equation: basis}
f(\tau) = \frac{1}{2}\sum_{\mu \in L'/L}\sum_{\substack{m\in \Z-Q(\mu)\\m > 0}}a_f^+(\mu,-m)F_{\mu,m,\kappa}(\tau).
\end{align}

%\todo[inline]{I think we need to restrict to $p=1$ or impose $2\kappa\equiv p-1\pmod{4}$, otherwise the principal part might look different.} Seba: I added Remark \ref{rmk:weight}

The following lemma follows by induction from \cite[Proposition~3.4]{BEY21}.
\begin{lemma}\label{Lemma: R_k^n acting on F_m,h}
	For $j \in \N_0$ we have that
	\begin{equation*}
	R_\kappa^j \left(F_{\mu, m, \kappa,s}\right)(\tau) = (4\pi m)^j \frac{\Gamma\left(s+j+\frac{\kappa}{2}\right)}{\Gamma\left(s+\frac{\kappa}{2}\right)} F_{\mu, m,\kappa+2j,s}(\tau).
	\end{equation*}
\end{lemma}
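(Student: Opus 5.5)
The plan is induction on $j$, starting from the one-step identity of \cite[Proposition~3.4]{BEY21}:
\[
R_\kappa\bigl(F_{\mu,m,\kappa,s}\bigr)=4\pi m\Bigl(s+\frac{\kappa}{2}\Bigr)F_{\mu,m,\kappa+2,s},
\]
valid for every weight $\kappa$ and every $s$ with $\Re(s)>1$. We can use it at any weight because \cite{BEY21} proves it termwise on the Poincar\'e series. The underlying reasons are these. First, $R_\kappa$ commutes with the slash action, in the sense that $R_\kappa(f|_{\kappa,\rho_L}\gamma)=(R_\kappa f)|_{\kappa+2,\rho_L}\gamma$. Second, the seed satisfies
\[
R_\kappa\bigl(\mathcal{M}_{\kappa,s}(4\pi m v)e(-mu)\bigr)=4\pi m\Bigl(s+\frac{\kappa}{2}\Bigr)\mathcal{M}_{\kappa+2,s}(4\pi m v)e(-mu),
\]
which is a standard contiguous relation for the Whittaker function $M_{\frac{\kappa}{2},s-\frac12}$. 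Third, absolute and locally uniform convergence for $\Re(s)>1$ lets us differentiate the series term by term. The normalising factor $\frac{1}{2\Gamma(2s)}$ does not depend on $\kappa$, so it passes through unchanged.

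The case $j=0$ is trivial, since both sides equal $F_{\mu,m,\kappa,s}$ and the Gamma quotient is $1$. For the inductive step, assume the formula holds for $j$. By definition $R_\kappa^{j+1}=R_{\kappa+2j}\circ R_\kappa^j$. Applying $R_{\kappa+2j}$ to the inductive hypothesis and using linearity gives
\[
R_\kappa^{j+1}F_{\mu,m,\kappa,s}=(4\pi m)^j\frac{\Gamma(s+j+\frac{\kappa}{2})}{\Gamma(s+\frac{\kappa}{2})}\,R_{\kappa+2j}F_{\mu,m,\kappa+2j,s}.
\]
Now apply the one-step identity at weight $\kappa+2j$. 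It produces the factor $4\pi m\,(s+\frac{\kappa+2j}{2})=4\pi m\,(s+j+\frac{\kappa}{2})$ and the series $F_{\mu,m,\kappa+2j+2,s}$. The functional equation $\Gamma(z+1)=z\Gamma(z)$ with $z=s+j+\frac{\kappa}{2}$ then combines
\[
\Gamma\Bigl(s+j+\frac{\kappa}{2}\Bigr)\Bigl(s+j+\frac{\kappa}{2}\Bigr)=\Gamma\Bigl(s+j+1+\frac{\kappa}{2}\Bigr),
\]
which is exactly the claimed formula for $j+1$.

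The main obstacle is checking that the one-step identity really holds at the intermediate weights $\kappa+2j$, which may become positive. There the standing assumption $\kappa<0$ is no longer in force, although the parity condition $2(\kappa+2j)\equiv p-q \pmod 4$ is preserved. The definition \eqref{eq:PCseries} and its convergence for $\Re(s)>1$ do not depend on the sign of the weight, so we only need the computation of \cite{BEY21}. That computation is a statement about the seed function and the group action, and it holds for any real weight. It is also worth confirming that the normalisation of $\mathcal{M}_{\kappa,s}$ agrees with that of \cite{BEY21}, so that the constant is exactly $4\pi m(s+\frac{\kappa}{2})$ with no extra sign.
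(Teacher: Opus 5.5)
Your proposal is correct and is exactly the paper's argument: the lemma is obtained by induction on $j$ from the one-step identity $R_\kappa F_{\mu,m,\kappa,s}=4\pi m\,(s+\frac{\kappa}{2})F_{\mu,m,\kappa+2,s}$ of \cite[Proposition~3.4]{BEY21}, with $\Gamma(z+1)=z\Gamma(z)$ absorbing the new factor at each step. One cosmetic slip: with the paper's normalisation the seed $\mathcal{M}_{\kappa,s}(4\pi m v)$ involves $M_{-\frac{\kappa}{2},s-\frac12}$, not $M_{\frac{\kappa}{2},s-\frac12}$, although the constant $4\pi m\,(s+\frac{\kappa}{2})$ you quote is right.
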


We also recall the formula
$$R^j_\kappa=\sum_{r=0}^j(-1)^r{j \choose r}(\kappa +r)_{j-r}\Im(\tau)^{r-j}(4\pi D)^r,$$
where $D:= \frac{1}{2\pi i}\frac{\partial}{\partial \tau}$ and $(x)_s:=x(x+1)\cdots (x+s-1)$; see, for instance \cite[Lemma 5.4]{BFOR17}. Using $D(e(m\tau))=me(m\tau)$ and $(x)_s=(-1)^s(1-x-s)_s$, this implies
\begin{equation}\label{eq:raising_of_exponential}
\begin{split}
        R^j_{\kappa-2j}(e(m\tau)) %&=\left(\sum_{a=0}^r(-1)^a{r \choose a}(\kappa-2r +a)_{r-a}\Im(\tau)^{a-r}(4\pi m)^a \right) e(m\tau)\\
        &=(-1)^j\left(\sum_{r=0}^j{j \choose r}(j+1-\kappa)_{r}\Im(\tau)^{-r}(4\pi m)^{j-r} \right) e(m\tau).
\end{split}
\end{equation}
Note that $R^j_{\kappa-2j}(e(m\tau))$ is $e(m\tau)$ times a polynomial in $\Im(\tau)^{-1}$.

%{\claudia What is meant here? Note that $R^j_{\kappa-2j}(e(m\tau))$ is a polynomial in $\Im(\tau)^{-1}\cdot e(m\tau)$.?? Seba: Is it better now?}

\subsection{Operators on vector-valued modular forms and Rankin--Cohen brackets}\label{section:operatorsandrankin} 
For an even lattice $L$ we let $A_{\kappa,L}$ be the space of $\C[L'/L]$-valued smooth modular forms  of weight $\kappa$ with respect to the representation $\rho_L$. 

Let $K \subset L$ be a sublattice of finite index. Since $K \subset L \subset L' \subset K'$ it follows that $L/K \subset L'/K \subset K'/K$, hence there is a natural map $L'/K \rightarrow L'/L,\,  \mu \mapsto \bar{\mu}$. For $\mu \in K'/K$, $f \in A_{\kappa, L}$, and $g \in A_{\kappa,K}$ define
\begin{equation*}
(f_K)_\mu \coloneqq \begin{dcases}
f_{\bar{\mu}}& \text{ if } \mu \in L'/K, \\
0& \text{ if } \mu \not\in L'/K,
\end{dcases}\qquad 
\left(g^L\right)_{\bar{\mu}} \coloneqq \sum_{\alpha \in L/K} g_{\alpha + \mu},
\end{equation*}
where $\mu$ is a fixed preimage of $\bar{\mu}$ in $L'/K$. The proof of the following lemma is contained in \cite[Section~3]{BY09}.

\begin{lemma}\label{Lemma: restriction and trace maps}
	There are two natural maps
	\begin{equation*}
	\textup{res}_{L/K} \colon A_{\kappa, L} \rightarrow A_{\kappa, K}, \hspace{10pt} f \mapsto f_K,
\qquad\qquad
	\tr_{L/K} \colon A_{\kappa, K} \rightarrow A_{\kappa, L}, \hspace{10pt} g \mapsto g^L,
	\end{equation*}
	such that for any $f \in A_{\kappa,L}$ and $g \in A_{\kappa,K}$, we have $
	\langle f, g^L \rangle = \langle f_K, g \rangle$.
	\end{lemma}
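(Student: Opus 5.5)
We prove Lemma \ref{Lemma: restriction and trace maps} in two steps: first show that $f_K$ and $g^L$ again transform with respect to the Weil representations $\rho_K$ and $\rho_L$, then prove the adjointness identity. Smoothness is clear, since both maps act componentwise by linear combinations of the components. For the transformation law, the slash-operator differs from one representation to the other only through the representation matrix. It therefore suffices to check an intertwining property on $\C[K'/K]$ and $\C[L'/L]$. Define $\iota\colon \C[L'/L]\to\C[K'/K]$ by $\e_{\bar\mu}\mapsto \sum_{\mu\in L'/K,\ \mu\mapsto\bar\mu}\e_\mu$, and $\pi\colon\C[K'/K]\to\C[L'/L]$ by $\e_\mu\mapsto \e_{\bar\mu}$ if $\mu\in L'/K$ and $\e_\mu\mapsto 0$ otherwise. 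Then $f_K=\iota(f)$ and $g^L=\pi(g)$ componentwise. We need $\rho_K(\gamma)\iota=\iota\rho_L(\gamma)$ and $\pi\rho_K(\gamma)=\rho_L(\gamma)\pi$ for all $\gamma\in\widetilde\Gamma$. It is enough to verify these on the generators $\widetilde T$ and $\widetilde S$.

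For $\widetilde T$, both identities follow because $Q(\mu)\equiv Q(\bar\mu)$ mod $\Z$ for $\mu\in L'/K$; this holds since $L$ is even and $(L',L)\subset\Z$. For $\widetilde S$, note first that the signatures of $K$ and $L$ agree, so the eighth roots of unity coincide. Expanding $\rho_K(\widetilde S)\iota(\e_{\bar\mu})$ gives
\[
\frac{e(\frac18(q-p))}{\sqrt{|K'/K|}}\sum_{\nu\in K'/K}\Bigl(\sum_{\alpha\in L/K}e(-(\nu,\mu+\alpha))\Bigr)\e_\nu .
\]
By character orthogonality, the inner sum equals $|L/K|\,e(-(\nu,\mu))$ when $\nu\in (L)'/K=L'/K$, and vanishes otherwise. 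Combining this with $|K'/K|=|L/K|^2|L'/L|$ yields exactly $\iota\rho_L(\widetilde S)(\e_{\bar\mu})$. The identity for $\pi$ can be checked the same way. Alternatively, it follows from the one for $\iota$ by adjointness: $\pi=\iota^*$ with respect to $\langle\cdot,\cdot\rangle$, and the Weil representations are unitary.

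Adjointness itself is a direct computation:
\[
\langle f,g^L\rangle=\sum_{\bar\mu}f_{\bar\mu}\sum_{\alpha}\overline{g_{\alpha+\mu}}=\sum_{\mu\in L'/K}f_{\bar\mu}\overline{g_\mu}=\langle f_K,g\rangle,
\]
where the last step uses that $(f_K)_\mu=0$ off $L'/K$. The only real point, and the step requiring care, is the Gauss-sum/orthogonality computation for $\widetilde S$. That computation rests on the identification $\{\nu\in K'/K: (\nu,L)\subset\Z\}=L'/K$ and on the index relation $|K'/K|=|L/K|^2|L'/L|$.
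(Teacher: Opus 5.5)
Your proof is correct. The identifications $f_K=\iota(f)$ and $g^L=\pi(g)$ hold. The check on $\widetilde T$ goes through, since it only uses that $Q$ is well defined modulo $1$ on $L'/L$. The character-sum computation for $\widetilde S$, combined with $|K'/K|=|L/K|^2\,|L'/L|$, is right. So are the passage from $\iota$ to $\pi=\iota^*$ via unitarity of the Weil representations and the final adjointness computation.

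The paper gives no argument of its own here; it simply cites \cite[Section~3]{BY09}. That buys brevity, while your generator-by-generator verification is a self-contained, elementary proof of the cited fact.

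If you want a slightly more conceptual packaging of the same computation, put $H=L/K$. This is an isotropic subgroup of $K'/K$ with $H^\perp=L'/K$, and the image $\iota(\C[L'/L])$ is exactly the space of $H$-invariant vectors supported on $H^\perp$. Multiplication by $e(Q(\cdot))$ preserves this subspace, and the finite Fourier transform defining $\rho_K(\widetilde S)$ interchanges the two conditions. So $\rho_K$-stability of $\iota(\C[L'/L])$ is immediate, and only the normalising constant remains to be checked.
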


Now let $K$ and $L$ be even lattices. For $n\in\N_0$ and functions $f \in A_{\kappa,K}$ and $g \in A_{\ell,L}$ with $\kappa,\ell \in \frac{1}{2} \Z$ we define the $n$-th \emph{Rankin--Cohen bracket} by
\begin{align*}\label{Definition: RC brackets}
[f,g]_n \coloneqq \sum_{\substack{r,s \geq 0\\ r+s=n}} (-1)^r \binom{\kappa+n-1}{s}\binom{\ell+n-1}{n-s}  (D^rf) \otimes (D^sg),
\end{align*}
where $D^rf =\frac{1}{(2\pi i )^r}\frac{\partial^rf}{\partial \tau^r}$ and the tensor product of two vector-valued forms $f = \sum_{\mu}f_\mu \e_\mu \in A_{\kappa,K}$ and $g = \sum_{\nu}g_\nu \e_\nu \in A_{\ell,L}$ is defined by 
\[
f \otimes g \coloneqq \sum_{\mu , \nu} f_\mu g_\nu \e_{\mu \oplus  \nu} \in A_{\kappa+\ell,K \oplus L}.
\]
%\todo[inline]{$\e_{\mu + \nu}=\e_{\mu} \otimes \e_{\nu}$ here?} Since the lattice is $K\oplus L$, I think the canonical basis is $\e_{\mu\oplus \nu}$.

% \begin{proposition}[{\cite[Proposition~3.6]{BEY21}}]\label{Proposition: lowering of RC brackets}
% 	Let $f \in H_{\kappa,K}$ and $g \in H_{\ell,L}$ be harmonic Maass forms. For $n \in \N_0$ we have
% 	\begin{align*}
% 	&(-4\pi)^n L_{\kappa+\ell+2n}[f,g]_n= \binom{\ell+n-1}{n} R_\kappa^n f \otimes L_\ell g + (-1)^n \binom{\kappa+n-1}{n}L_\kappa f \otimes R_\ell^n g.
% 	\end{align*}
% \end{proposition}
%\todo[inline]{Use polynomial binomial coefficients? Gamma-functions are not define for $\kappa=0,-1-,...$}

\subsection{Rational relations for spaces of cusp forms}\label{sec 
Rational relation}
A sequence
% \begin{equation}\label{eq rational relation}
%    \{\lambda(\mu,m)\}_{\mu \in L'/L,m\in \Q_{>0}}\subseteq \mathbb{Q} 
% \end{equation}
\begin{equation*}%\label{eq rational relation}
   \{\lambda(\mu,m)\}_{\mu \in L'/L,m\in \Z+Q(\mu), m>0}\subseteq \mathbb{Q} 
\end{equation*}
is called a \emph{rational relation} for~$S_{\kappa,L}$ if the following conditions are satisfied:
\begin{enumerate}
    \item For each~$\mu\in L'/L$ we have~$\lambda(-\mu,m)=\lambda(\mu,m)$,
    \item For each~$\mu\in L'/L$ we have~$\lambda(\mu,m)=0$ for all but finitely many~$m\in \Q_{>0}$,
    \item For every cusp form $f(\tau)=\sum_{\mu \in L'/L}\sum_{\substack{ m\in \Z+Q(\mu)\\ m>0}}a_f(\mu,m)e(m\tau)\e_{\mu}\in S_{\kappa,L} $ we have
    \[\sum_{\mu \in L'/L}\sum_{\substack{ m\in \Z+Q(\mu)\\ m>0}}\lambda(\mu,m)a_f(\mu,m)=0.\]
\end{enumerate}
Note that, when $S_{\kappa,L}=\{0\}$, condition (3) is satisfied trivially.

We recall the Bruinier--Funke pairing considered in \cite{brfu04}. Let $\kappa<0$. For a harmonic weak Maass form $h\in H^{\text{hol}}_{\kappa,L}$ and a cusp form $g\in S_{2-\kappa,L^-}$ it is defined as
\begin{equation}{\label{bfpairing_def}}
    \{ g,h\}:= \int_\mathcal{F} \langle g(\tau), (\xi_\kappa h) (\tau) \rangle v^{2-\kappa}\d\mu(\tau),
\end{equation}
where $\tau=u+iv$, $\d\mu(\tau):=\frac{\d u\d v}{v^2}$ and $\mathcal{F}$ denotes the standard fundamental domain for $\SL_2(\Z)$. By \cite[Proposition 3.5]{brfu04} we have 
\begin{equation}{\label{bfpairing_formula}}
    \{ g,h\} =\sum_{\mu\in L'/L}\sum_{\substack{n\in \Z+Q(\mu)\\n<0}} a_h^+(\mu,n) a_g(\mu,-n).
\end{equation}

As straightforward consequence of McGraw's rationality theorem \cite{McG03} we have

\begin{lemma}\label{lemma:mcgrawextended}
    Let $\kappa<0$ and suppose that $f\in M^!_{\kappa,L}$ has rational principal part (see Definition \ref{def:rationa_principal_part}). Then all Fourier coefficients of $f$ are rational.
\end{lemma}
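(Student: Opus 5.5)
The plan is to deduce the lemma from McGraw's theorem \cite{McG03}. That theorem says the space $M^!_{\kappa,L}$, and likewise $M_{\kappa,L}$ for any $\kappa$, has a basis of forms with rational Fourier coefficients. The argument is a linear-algebra reduction: subtract from $f$ a combination, with rational coefficients, of rational-coefficient forms having the same principal part. The difference is then a holomorphic modular form of negative weight $\kappa<0$, hence zero.

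First I would use McGraw's result to choose a basis $f_1,\dots,f_N$ of the finite-dimensional space
\[
M^{!,\le n_0}_{\kappa,L}:=\{g\in M^!_{\kappa,L}: a_g(\mu,n)=0 \text{ for } n<-n_0\},
\]
where $n_0$ is large enough that $f$ lies in it; every $f_i$ should have rational Fourier coefficients. Such a basis exists because $M^{!,\le n_0}_{\kappa,L}\cdot \Delta^{k}\subset M_{\kappa+12k,L}$ for $k$ large, with $\Delta$ the discriminant form. The image is cut out by rational linear conditions on the coefficients, namely vanishing of coefficients below order $k-n_0$, and $\Delta$ has rational (integer) coefficients. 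The rational structure therefore descends from McGraw's rational basis of $M_{\kappa+12k,L}$.

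Next, consider the linear map $\phi$ sending $g\in M^{!,\le n_0}_{\kappa,L}$ to its principal part, viewed as a vector of the finitely many coefficients $a_g(\mu,n)$ with $-n_0\le n<0$. The map $\phi$ is injective: a form in its kernel lies in $M_{\kappa,L}$, which is $\{0\}$ since $\kappa<0$. With respect to the rational basis $f_i$, the matrix of $\phi$ has rational entries. Since $\phi(f)$ is rational by hypothesis and the system $\sum_i c_i\phi(f_i)=\phi(f)$ with rational coefficients has a solution over $\C$, it also has a solution with $c_i\in\Q$. A consistent linear system with rational data has rational solutions, and by injectivity the solution is unique. Hence $f=\sum_i c_i f_i$ with $c_i\in\Q$, and all Fourier coefficients of $f$ are rational.

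The main obstacle is the descent of the rational structure from the holomorphic space to the weakly holomorphic subspace. This requires care with vector-valued forms: multiplication by $\Delta^k$ commutes with the Weil representation action, and the image of $M^{!,\le n_0}_{\kappa,L}$ in $M_{\kappa+12k,L}$ is exactly the subspace where the coefficients of index below $k-n_0$ vanish, which is defined over $\Q$. One also has to check that the $\Q$-span of McGraw's basis meets this subspace in a $\Q$-form of it, which holds because the subspace is cut out by rational linear equations.
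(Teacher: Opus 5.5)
Your argument is correct and takes the route the paper intends. The paper gives no proof beyond calling the lemma a straightforward consequence of McGraw's rationality theorem. Your steps fill in that consequence in the standard way: a rational basis for the pole-order-bounded subspace obtained via multiplication by $\Delta^k$, injectivity of the principal-part map because $M_{\kappa,L}=\{0\}$ for $\kappa<0$, and the fact that a consistent rational linear system with a unique solution has a rational solution.
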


Using this lemma we show that certain linear combinations of Maass--Poincar\'{e} series have rational coefficients. 

\begin{proposition}\label{prop:rational_relations_define_weakly_holomorphic_forms}
    Assume $\kappa<0$ and let $\{\lambda(\mu,m)\}$ be a rational relation for $S_{2-\kappa,L^-}$. Then
    $$f(\tau):=\sum_{\mu \in L'/L}\sum_{\substack{m\in \Z-Q(\mu)\\m>0}}
    \lambda(\mu,m)F_{\mu,m,\kappa}(\tau)\in M^!_{\kappa,L}.$$
    % {\claudia: Do we want to take $\frac12$ of this? (Since we get twice something in the proof, this should be fine.} Seba: I think it is OK as it is.
    Moreover, $f$ has rational Fourier coefficients.
    %\todo{$k=\kappa$ everywhere?} True! corrected :)
\end{proposition}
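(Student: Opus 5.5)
The plan is to show that $f$ is annihilated by $\xi_\kappa$ and then apply Lemma \ref{lemma:mcgrawextended}. By the properties of Maass--Poincar\'e series recalled above, each $F_{\mu,m,\kappa}$ lies in $H^{\text{cusp}}_{\kappa,L}$. By condition (2) the defining sum is finite, so $f\in H^{\text{cusp}}_{\kappa,L}$. Its holomorphic part has principal part
$$\sum_{\mu}\sum_{m>0}\lambda(\mu,m)e(-m\tau)(\e_\mu+\e_{-\mu}),$$
and by condition (1) this equals $2\sum_\mu\sum_m \lambda(\mu,m) e(-m\tau)\e_\mu$. In particular $a_f^+(\mu,-m)=2\lambda(\mu,m)\in\Q$.

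The key step is to prove $\xi_\kappa f=0$. Put $g:=\xi_\kappa f$. Since $f\in H^{\text{cusp}}_{\kappa,L}$, we have $g\in S_{2-\kappa,L^-}$. Evaluate the Bruinier--Funke pairing \eqref{bfpairing_def} with this choice of $g$. By \eqref{bfpairing_formula},
$$\{g,f\}=\sum_{\mu}\sum_{n<0}a_f^+(\mu,n)a_g(\mu,-n)=2\sum_{\mu}\sum_{m>0}\lambda(\mu,m)a_g(\mu,m),$$
and this vanishes by condition (3) of a rational relation, applied to the cusp form $g$. On the other hand, the definition \eqref{bfpairing_def} gives
$$\{g,f\}=\int_{\mathcal F}\langle g,g\rangle v^{2-\kappa}\,\d\mu=\|g\|^2_{\mathrm{Pet}}.$$
Hence the Petersson norm of $g$ is zero, so $g=0$.

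Since $\xi_\kappa f=0$, $f$ is holomorphic on $\H$. Because it lies in $H_{\kappa,L}$, it has at most exponential growth at the cusp, so $f\in M^!_{\kappa,L}$. Its principal part has coefficients $2\lambda(\mu,m)\in\Q$, so Lemma \ref{lemma:mcgrawextended} (with $\kappa<0$) shows that all Fourier coefficients of $f$ are rational.

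The main obstacle is the middle step. It requires checking that the pairing \eqref{bfpairing_def} is literally the Petersson norm of $\xi_\kappa f$, and that \eqref{bfpairing_formula} applies with the conjugation conventions of the sesquilinear form $\langle\cdot,\cdot\rangle$, so that it is $a_g$ and not $\overline{a_g}$ that appears. If conjugates do appear, this is harmless: the $\lambda(\mu,m)$ are real, so condition (3) applied to $g$ still makes the sum vanish after taking complex conjugates.
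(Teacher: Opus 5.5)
Your proof is correct and follows the paper's argument. The steps match: the sum is finite, the principal-part coefficients are $2\lambda(\mu,m)$, the Bruinier--Funke pairing vanishes by condition (3), and rationality then comes from Lemma \ref{lemma:mcgrawextended}. The one difference is that the paper cites \cite[Theorem 1.1]{brfu04} to pass from $\{g,f\}=0$ for all $g\in S_{2-\kappa,L^-}$ to $f\in M^!_{\kappa,L}$, whereas you give the underlying one-line argument: take $g=\xi_\kappa f$ and use $\{\xi_\kappa f,f\}=\|\xi_\kappa f\|^2$.
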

\begin{proof}
Clearly $f\in H^{\text{cusp}}_{\kappa,L}$ and it has Fourier expansion
\begin{equation}\label{eq:fourier_expansion_rational_relation}
\begin{split}
    f(\tau)&=\sum_{\mu \in L'/L}\sum_{\substack{m\in \Z-Q(\mu)\\m>0}} \lambda(\mu,m)e(-m\tau)(\e_{\mu}+\e_{-\mu}) + O(1)\\
    &=2\sum_{\mu \in L'/L}\sum_{\substack{m\in \Z-Q(\mu)\\m>0}} \lambda(\mu,m)e(-m\tau)\e_{\mu} + O(1).
    \end{split}
\end{equation}
Let $g(\tau)=\sum_{\mu \in L'/L}\sum_{\substack{m\in \Z-Q(\mu)\\ m > 0}}a_g(\mu,m)e(m\tau)\e_{\mu}\in S_{2-\kappa,L^-}$. Applying the Bruinier--Funke pairing \eqref{bfpairing_def} to $f$ and $g$ and using \eqref{bfpairing_formula} gives
\[
 \{g, f\}= 2\sum_{\mu\in L'/L}\sum_{\substack{m\in \Z-Q(\mu)\\ m>0}} \lambda(\mu,m)a_g(\mu,m).
\]
%\todo[inline]{Is there a 2 missing?} Added!
Since $\lambda$ is a rational relation for $S_{2-\kappa,L^-}$ the right hand side vanishes for every cusp form $g\in S_{2-\kappa,L^-}$.
Thus $f\in M^!_{\kappa,L}$ by \cite[Theorem 1.1]{brfu04}. Moreover, since all $\lambda(\mu,m)$ are rational, the principal part of $f$ is rational. Lemma \ref{lemma:mcgrawextended} implies the desired result.
\end{proof}

% For a  prime discriminant $D<0$ and the lattice $L$ of determinant $|D|$, the assignment
% $$\sum_{\mu\in L'/L}f_{\mu}(\tau)\e_{\mu} \mapsto \sum_{\mu\in L'/L}f_{\mu}(|D|\tau)$$
% defines a linear map~$S_{k,L}\to S^+_{k}(\Gamma_0(|D|),\chi_D)$ (see \cite{bruinierbundschuh}). This implies the following lemma.

% \begin{lemma}\label{lem vector-valued to scalar mfs}
% Assume  $D<0$ is a prime discriminant and the even lattice $L$ has determinant $|D|$. Then, for every rational relation~$\{\lambda(t)\}_{t\in \N}$ for~$S^+_{k}(\Gamma_0(|D|),\chi_D)$ the sequence~$\lambda'(m,\mu)=\lambda(m|D|)$ defines a relation for~$S_{k,L}$ (that is independent of $\mu\in L'/L$).
% \end{lemma}

\section{Quadratic spaces of signature $(1,q)$ and hyperbolic geometry}\label{sec:quad_spaces_(1,q)_hyp_geometry}

\subsection{Hyperbolic models}\label{sec:hyperbolic-model}
Let $q\geq 2$ be an integer and $(V,Q)$ a real quadratic space of signature $(1,q)$ with associated bilinear form $(\cdot,\cdot)$.  By Witt's theorem %\cite[Lemma 1.3]{shimurabook}, 
there are isotropic vectors $\ell,\ell'\in V$ with $(\ell,\ell')=1$ and a negative definite subspace $W\subseteq V$ such that 
\[V=(\R \ell\oplus \R\ell')\oplus^\perp W\]
where $\oplus^\perp$ denotes orthogonal sum. A general element $X\in V$ will be written as $X=a\ell'+c\ell +b$ with $a,c\in \R$ and $b\in W$. We also write $a_X\coloneqq(X,\ell)=a$.

Let $\mathrm{Gr}(V)$   denote the Grassmannian of positive definite lines in $V$. We can identify $\Gr(V)$ with
$$V_1:=\{X_1\in V: (X_1,X_1)=1, (X_1,\ell)>0\}$$
via %$V_1\to \Gr(V), 
$X_1\mapsto \R X_1$. We further identify $V_1$ with the \emph{upper half-space model}
$$\H_W:= W\times \R_{>0}=\{P=(x,y):x\in W, y>0\}$$
via the map $\H_W \to V_1$ sending $P=(x,y)$ to
\begin{equation}\label{def:X_1(P)}
X_1(P):=\frac{1}{\sqrt{2}y}x+\frac{1}{\sqrt{2}y}\ell'+\left(\frac{y^2-Q(x)}{\sqrt{2}y}\right)\ell\in V_1.
\end{equation}

Choosing an orthogonal basis $\{w_1,\ldots,w_{q-1}\}$ of $W$ with $Q(w_i)=-1$ for all $i$, gives the \emph{classical model}
\begin{equation}\label{eq:classical_model}
    \H_W\simeq  \H^q:=\left\{(x_1,\ldots,x_{q-1},y)\in \R^{q}:y>0\right\}.
\end{equation}
For two points $P=(x,y),\, P'=(x',y')\in \H_W$, the hyperbolic distance $d(P,P')$ satisfies
\begin{equation}\label{eq:cosh_d_H_W}
    \cosh \, d(P,P')=\frac{|Q(x-x')|+y^2+(y')^2}{2yy'},
\end{equation}
which in the classical model~\eqref{eq:classical_model} becomes
\[\cosh \, d( (x_1,\ldots,x_{q-1},y),(x_1',\ldots,x_{q-1}',y'))=\frac{\sum_{i=1}^{q-1}(x_i-x_i')^2+y^2+(y')^2}{2yy'}.\]

\begin{example}\label{ex:V_signature_(1,2)}
In the case $q=2$ we can take %and $W=\R w_1$ we have $\mathcal{C}_W=\R \oplus \R w_1$ and $\mathcal{V}_W=\R$, hence the matrix representation of $V$ is
    \begin{equation*}
 V=\left\{X=\begin{pmatrix}
    b & c \\ -a & -b
\end{pmatrix}:a,b,c\in \R\right\} \text{ with }Q(X)=ac-b^2
\end{equation*}
and $W=\left\{\left(\begin{smallmatrix}
    b & 0 \\ 0 &- b
\end{smallmatrix}\right):b\in \R\right\}\simeq \R$ via $\left(\begin{smallmatrix}
    b & 0 \\ 0 & -b
\end{smallmatrix}\right)\mapsto -b$, so that $\H_W\simeq \R \times \R_{>0}\simeq \H$. Choosing
$$\ell:=\left(\begin{matrix}
0 & 1 \\ 0 & 0
\end{matrix}\right), \quad \ell':=\left(\begin{matrix}
0 & 0 \\ -1 & 0
\end{matrix}\right)$$
we get
$$X_1(\tau)=\frac{1}{\sqrt{2}y}\begin{pmatrix}
    -x & x^2+y^2 \\ -1 & x
\end{pmatrix} \quad \text{ for }\tau=x+iy\in \H.$$
Note that every $g\in\SL_2(\R)$ acts as an isometry on $V$ via $X\mapsto gXg^{-1}$. If we also consider the action of $\SL_2(\R)$ on $\H$ by  linear fractional transformations, then it is easy to check that the map $\tau \mapsto X_1(\tau)$ is $\SL_2(\R)$-equivariant.
\end{example}

%\todo[inline]{$\H_W^2$ and $\H_W^3$ are not defined} corrected

\begin{example}\label{ex:V_signature_(1,3)} 
In the case $q=3$ %and $W=\R w_1+\R w_2$ we have $\mathcal{C}_W=\R \oplus \R w_1\oplus \R w_2 \oplus \R w_1w_2 $ and $\mathcal{V}_W=\R\oplus \R w_1\simeq \C$, hence the matrix representation of $V$ becomes isomorphic to
we can take
    \begin{equation*}
V= \left\{X=\begin{pmatrix}
    c & b \\ \overline{b} & a
\end{pmatrix}:a,c\in \R,b\in \C \right\} \text{ with }Q(X)=ac-|b|^2  
\end{equation*}
and $W=\left\{\left(\begin{smallmatrix}
    0 & b \\ \overline{b} & 0
\end{smallmatrix}\right):b\in \C\right\}\simeq \C$ via $\left(\begin{smallmatrix}
    0 & b \\ \overline{b} & 0
\end{smallmatrix}\right)\mapsto b$, so that $\H_W\simeq \C \times \R_{>0}$ can be identified with the subset $\{z+rj:z\in \C,r>0\}$ of Hamilton's quaternions $\R[i,j,k]$. Choosing
$$\ell:=\left(\begin{matrix}
1 & 0 \\ 0 & 0
\end{matrix}\right), \quad \ell':=\left(\begin{matrix}
0 & 0 \\ 0 & 1
\end{matrix}\right)$$
we get
$$X_1(P)=\frac{1}{\sqrt{2}r}\begin{pmatrix}
    |z|^2+r^2 & z \\ \overline{z} & 1
\end{pmatrix} \quad \text{ for }P=z+rj \text{ where } z\in \C,r>0.$$
In this setting every $g\in\SL_2(\C)$ acts as an isometry on $V$ via $X\mapsto gX\overline{g}^t$. If we also consider the action of $\SL_2(\C)$ on $\{P=z+rj:z\in \C,r>0\}$ given by
\[\begin{pmatrix}
    \alpha & \beta \\ \gamma & \delta
\end{pmatrix} P:=(\alpha P+\beta)(\gamma P+\delta)^{-1}\]
(where sums, products and inverses are taken in the division algebra $\R[i,j,k]$) then one can check that the map $P \mapsto X_1(P)$ is $\SL_2(\C)$-equivariant (see \cite[Chapter 9, Proposition 1.2]{EGM98} or \cite[Section 4]{HIPS25}).
\end{example}

\subsection{Points and geodesic hypersurfaces}\label{sec:points_and_geodesic_hypersurfaces}

Every $X=a\ell'+c\ell+b\in V$ with  $Q(X)>0$ defines a point $P_X\in \H_W$ given by
\begin{equation*}%\label{eq:def_P_X}
P_X:=\left(\frac{b}{a},\frac{\sqrt{Q(X)}}{|a|}\right).    
\end{equation*}
A straightforward computation using \eqref{def:X_1(P)} gives the identity
\begin{equation}\label{eq:X_1(P_X)}
X_1(P_X)=\frac{\sign(a)}{\sqrt{2Q(X)}}X.    
\end{equation}
In particular, if $X\in V_1$ then $X_1(P_X)=X$.

Every $Y=a\ell'+c\ell+b\in V$ with  $Q(Y)<0$ defines a geodesic hypersurface $S_Y\subset  \H_W$ given by
\begin{equation}\label{eq:def_S_Y}
    S_Y:=\{(x,y)\in \H_W: a(y^2-Q(x))+(b,x)+c=0\}.
\end{equation}
Note that for $P=(x,y)\in \H_W$ and $Y=c\ell +a\ell'+b\in V$ we have
\begin{equation}\label{eq:(X_1(P),Y)}
(X_1(P),Y)=\frac{a(y^2-Q(x))+(b,x)+c}{\sqrt2\,y}.
\end{equation}
Hence, under the identification $\Gr(V)\simeq \H_W$, $S_Y$ corresponds to
\begin{equation}\label{eq:Y_perp}
Y^\perp:=\{z\in \Gr(V): (X,Y)=0 \text{ for all }X\in z\}.
\end{equation}
It is easy to see that $\H_W\setminus S_Y$ has two connected components. When $a\neq 0$, $S_Y$ is a hemisphere of real dimension $q-1$ in $\H_W$ of radius $\frac{\sqrt{|Q(Y)|}}{|a|}$ centered at the point $(b/a,0)\in W\times \{0\}$. In this case $\H_W\setminus S_Y$ has one bounded component and one unbounded component with respect to the Euclidean distance in $W\times \R$. When $a=0$, $S_Y$ is a vertical half-hyperplane.

%If $a>0$ we parametrize $S_Y$ so that the associated normal vector field points to the bounded component of $\H_W\setminus S_Y$. If $a<0$ we take a parametrization with normal vector field pointing towards the unbounded component of $\H_W\setminus S_Y$. This defines an orientation of $S_Y$.

%$$P_X:=\left(\frac{b}{a},\frac{\sqrt{Q(X)}}{|a|}\right).$$

%Given $Y\in V$ with $Q(Y)<0$ and $a_Y> 0$, and a point $P\in \H_W\setminus S_Y$, we define $\sign_{Y}(P)$ as 1 if $P$ is in the bounded component of $ \H_W\setminus S_Y$, and as $-1$ otherwise. If $a_Y<0$, we define $\sign_{Y}(P)$ as $-1$ if $P$ is in the bounded component of $ \H_W\setminus S_Y$, and $1$ otherwise. Then $\sign_{-Y}(P)=-\sign_{Y}(P)$. 

%\subsection{Orientation of geodesic hypersurfaces}\label{sec:orientation}

% Note that for $P=(x,y)\in \H_W$ and $Y=c\ell +a\ell'+b\in V$ we have
% \begin{equation}\label{eq:(X_1(P),Y)}
% (X_1(P),Y)=\frac{a(y^2-Q(x))+(b,x)+c}{\sqrt2\,y}.
% \end{equation}

\begin{example}
    Let $V$ and $W\simeq \R$ be as in Example \ref{ex:V_signature_(1,2)} and $Y=\begin{pmatrix}
        b & c \\ -a & -b
    \end{pmatrix}\in V$ with $Q(Y)=ac-b^2<0$. Under $\H_W\simeq \H$ we identify $S_Y$ with the geodesic curve 
    $$\{\tau\in \H:a|\tau|^2+2b\Re(\tau)+c=0\}\subset \H.$$
If $a\neq 0$ then $S_Y$ is a semi-circle. If $a=0$ then $S_Y$ is the vertical semi-line of equation $\Re(\tau)=-\frac{c}{2b}$. 
\end{example}

For every $Y\in V$ with $Q(Y)<0$ we define 
the \emph{positive side} (resp.~\emph{negative side}) \emph{of $S_Y$ with respect to $Y$} as the component of $\H_W\setminus S_Y$ on which $(X_1(P),Y)>0$ (resp. $(X_1(P),Y)<0$). For $P\in \H_W$, define
\begin{equation*}
\sign_Y(P):=\sign((X_1(P),Y))
\end{equation*}
with the convention that $\sign(0):=0$. Then the two connected components of $\H_W\setminus S_Y$ are $\{P\in \H_W:\sign_Y(P)=1\}$ and $\{P\in \H_W:\sign_Y(P)=-1\}$. Note that
\[
\sign_{-Y}(P)=-\sign_Y(P).
\]
If $a_Y>0$, the set $\{P\in \H_W:\sign_Y(P)=1\}$ is the unbounded component of $\H_W\setminus S_Y$; if $a_Y<0$, it is the bounded component.

\begin{lemma}\label{l:sinh-distance-point-to-hypersurface}
Let $X,Y\in V$ with $Q(X)>0$ and $Q(Y)<0$. Let $P_{X}\in \H_W$ and $S_{Y}\subset \H_W$ be the associated  point and geodesic hypersurface, respectively. Then, the hyperbolic distance between $P_{X}$ and $S_{Y}$ satisfies the formula
$$\sinh  d(P_{X},S_{Y})=\frac{(X,Y)\cdot \sign_{Y}(P_{X})\cdot \sign(a_X)}{2\sqrt{Q(X)|Q(Y)|}}.$$
\end{lemma}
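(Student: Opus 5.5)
We first reduce to $P_X=X$ with $X\in V_1$. By \eqref{eq:X_1(P_X)}, $X_1(P_X)=\frac{\sign(a_X)}{\sqrt{2Q(X)}}X$. The right-hand side of the claimed formula is unchanged if $X$ is replaced by $X_1(P_X)$. Indeed, $(X,Y)\sign(a_X)/\sqrt{Q(X)}$ equals $\sqrt2\,(X_1(P_X),Y)$. Also $Q(X_1(P_X))=1$ and $a_{X_1(P_X)}>0$ by the definition of $V_1$, so for $X_1:=X_1(P_X)$ the formula reads $\frac{(X_1,Y)\sign_Y(P_X)}{2\sqrt{|Q(Y)|}}$. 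Thus it suffices to show that, for $X_1\in V_1$ corresponding to the point $P$,
\[
\sinh d(P,S_Y)=\frac{|(X_1,Y)|}{2\sqrt{|Q(Y)|}},
\]
since $(X_1,Y)\sign_Y(P)=|(X_1,Y)|$ by the definition of $\sign_Y$. Note that $(X,X)=2Q(X)$, so the normalization is $Q(X_1)=1$.

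Next we prove this distance formula in the hyperboloid picture. By \eqref{eq:cosh_d_H_W} and \eqref{def:X_1(P)}, one checks directly that $\cosh d(P,P')=\frac12 (X_1(P),X_1(P'))$, so $V_1$ with the metric from $Q$ is the hyperboloid model. Hence we may move everything by an element of $\SO(V)^+$. The group acts transitively on $V_1$ and preserves both sides of the formula: the distance, the pairing, and $Q(Y)$ are invariant, and $S_{gY}=gS_Y$ via \eqref{eq:Y_perp}. Alternatively, we avoid the group action and decompose $X_1=\alpha \hat Y+X_0$, where $\hat Y=Y/\sqrt{2|Q(Y)|}$ satisfies $(\hat Y,\hat Y)=-2$ and $X_0\perp Y$. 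Then $\alpha=-(X_1,Y)/(2\sqrt{2|Q(Y)|})\cdot\ldots$, up to the normalization, and $X_0$ has $(X_0,X_0)=2+2\alpha^2>0$.

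The closest point on $S_Y=Y^\perp\cap V_1$ should be the normalized $X_0$, namely $Z=X_0\sqrt{2}/\sqrt{(X_0,X_0)}$. For any $Z'\in Y^\perp\cap V_1$, the reverse Cauchy--Schwarz inequality in the Lorentzian space $Y^\perp$ of signature $(1,q-1)$ gives
\[
\tfrac12(X_1,Z')=\tfrac12(X_0,Z')\ge \sqrt{(X_0,X_0)/2}=\sqrt{1+\alpha^2},
\]
with equality at $Z'=Z$. Hence $\cosh d(P,S_Y)=\sqrt{1+\alpha^2}$, so $\sinh d=|\alpha|$. It then remains to compute $|\alpha|$ in terms of $(X_1,Y)$ and $Q(Y)$. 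From $(X_1,Y)=\alpha(\hat Y,Y)$ one gets $|\alpha|=|(X_1,Y)|/(2\sqrt{|Q(Y)|})$, after tracking the factors of $2$ between $Q$ and $(\cdot,\cdot)$.

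The main obstacle is bookkeeping. We must keep track of the factor-of-two conventions ($(X,X)=2Q(X)$ and $\cosh d=\frac12(X_1,X_1')$), and make sure the $Z'$ live in the correct sheet, with $(Z',\ell)>0$, so that reverse Cauchy--Schwarz gives the inequality in the right direction. We would verify the normalization on a simple example, such as $Y=\ell'-\ell$, the unit hemisphere, with $P=(0,y)$, where $d=|\log y|$ can be computed by hand.
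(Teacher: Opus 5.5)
Your strategy is sound and differs from the paper's. As written, however, the normalization is off by a factor of $\sqrt2$, so several intermediate claims are false as stated.

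\textbf{The normalization error.} In the paper $V_1$ is cut out by $(X_1,X_1)=1$, so $Q(X_1(P))=\tfrac12$. This follows directly from \eqref{def:X_1(P)}. The ``$Q(X_1(P))=1$'' in the introduction is a misprint; compare \eqref{eq:X_1(P_X)} and the proof of Corollary \ref{cor-distance}. This has three consequences.
\begin{enumerate}
\item Combining \eqref{def:X_1(P)} with \eqref{eq:cosh_d_H_W} gives $\cosh d(P,P')=(X_1(P),X_1(P'))$, not $\tfrac12(X_1(P),X_1(P'))$. Taking $P=P'$ shows the factor $\tfrac12$ cannot be right.
\item Your own correct identity $\sign(a_X)(X,Y)/\sqrt{Q(X)}=\sqrt2\,(X_1(P_X),Y)$ shows the reduced target must be $\sinh d(P,S_Y)=|(X_1,Y)|/\sqrt{2|Q(Y)|}$, not $|(X_1,Y)|/(2\sqrt{|Q(Y)|})$. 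The $\sqrt2$ was dropped. Your proposed test case detects this. For $Y=\ell'-\ell$ and $P=(0,y)$ one has $(X_1(P),Y)=(y^2-1)/(\sqrt2\,y)$. Your formula then returns $|y^2-1|/(2\sqrt2\,y)$ rather than $\sinh|\log y|=|y^2-1|/(2y)$.
\item $\hat Y=Y/\sqrt{2|Q(Y)|}$ has $(\hat Y,\hat Y)=-1$, not $-2$.
\end{enumerate}

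\textbf{The repair.} Work consistently on the hyperboloid $Q=1$. Set $\tilde X:=\sign(a_X)X/\sqrt{Q(X)}=\sqrt2\,X_1(P_X)$ and $\hat Y:=Y/\sqrt{|Q(Y)|}$. Then $(\tilde X,\tilde X)=2$, $(\hat Y,\hat Y)=-2$, and $\cosh d(P,P')=\tfrac12(\sqrt2\,X_1(P),\sqrt2\,X_1(P'))$. Decompose $\tilde X=\alpha\hat Y+X_0$ with $X_0\perp Y$. This gives exactly $\alpha=-(\tilde X,Y)/(2\sqrt{|Q(Y)|})$ and $(X_0,X_0)=2+2\alpha^2$. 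Since $(X_0,\tilde X)=(X_0,X_0)>0$, the vector $X_0$ lies in the same timelike cone as $\tilde X$. So reverse Cauchy--Schwarz applies to $X_0$ and every $Z'\perp Y$ on the sheet. It gives $\cosh d(P_X,S_Y)=\sqrt{1+\alpha^2}$, i.e.\ $\sinh d(P_X,S_Y)=|(X,Y)|/(2\sqrt{Q(X)|Q(Y)|})$. The sign factor is absorbed as you said, since $\sign_Y(P_X)=\sign(a_X)\sign((X,Y))$.

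\textbf{Comparison with the paper.} The paper makes the same reduction to $|(X,Y)|$ and normalizes $Q(X)=1$, $Q(Y)=-1$. It then picks $t>0$ with $t^{-1}-t=(X,Y)$ and uses Witt's theorem to move the pair $(X,Y)$ to $(t\ell'+t^{-1}\ell,\ell'-\ell)$. This is the point $(0,t^{-1})$ and the unit hemisphere, where \eqref{eq:cosh_d_H_W} yields $\cosh d=\tfrac12(t+t^{-1})$ immediately.
\begin{itemize}
\item The paper's route is shorter, and close to the group-action option you mention and then set aside. But it takes for granted that $(0,1)$ is the point of the hemisphere nearest to $(0,t^{-1})$ (cf.\ Lemma \ref{lemma:distance-sphere}).
\item Your route is coordinate-free. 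Projection onto $Y^\perp$ produces the foot of the perpendicular, and reverse Cauchy--Schwarz proves it minimizes the distance, with no appeal to Witt's theorem.
\item The price of your route is establishing the hyperboloid formula for $\cosh d$ with the correct constant, and checking the sheet condition.
\end{itemize}
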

\begin{proof} 
Using \eqref{eq:X_1(P_X)}, we are left to prove
$$\sinh  d(P_{X},S_{Y})=\frac{|(X,Y)|}{2\sqrt{Q(X)|Q(Y)|}}.$$
Substituting $X$ and $Y$ by $\frac{1}{\sqrt{Q(X)}}X$ and $\frac{1}{\sqrt{|Q(Y)|}}Y$, respectively, we can assume $Q(X)=1$ and $Q(Y)=-1$. Choose $t>0$ such that $t^{-1}-t=(X,Y)$.
Then $Q(t\ell'+t^{-1}\ell)=1$, $Q(\ell'-\ell)=-1$ and $(t\ell'+t^{-1}\ell,\ell'-\ell)=(X,Y)$. This implies that there exists $g\in \O(V)$ such that $g(X)=t\ell'+t^{-1}\ell$ and $g(Y)=\ell'-\ell$. Since $P_{g(X)}=(0,t^{-1})$ and $S_{g(Y)}=\{(x,y)\in \H_W: y^2-Q(x)=1\}$, we have (by \eqref{eq:cosh_d_H_W})
$$\cosh\, d(P_{X},S_{Y})=\cosh\, d(P_{g(X)},S_{g(Y)})=\cosh\,d((0,t^{-1}),(0,1))=\frac{1}{2}(t^{-1}+t).$$
This implies
$$\sinh\, d(P_{X},S_{Y})=\frac{1}{2}|t^{-1}-t|=\frac{|(X,Y)|}{2\sqrt{Q(X)|Q(Y)|}}.$$
This proves the lemma.
\end{proof}

\begin{corollary}\label{cor-distance}

Let $(x,y)\in \H_W$ and $Y=c\ell +a\ell'+b\in V$ with $Q(Y)<0$. Then we have 
\[\sinh d((x,y),S_Y)=\frac{|a(y^2-Q(x))+(x,b)+c|}{2y\sqrt{|Q(Y)|}}.
\]
\end{corollary}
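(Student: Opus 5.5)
The plan is to deduce the corollary from Lemma \ref{l:sinh-distance-point-to-hypersurface} applied to a suitable vector $X\in V$ with $P_X=(x,y)$. Take $X:=X_1(P)$ from \eqref{def:X_1(P)}. Then $Q(X)=1>0$, and $a_X=(X,\ell)=\frac{1}{\sqrt{2}y}>0$, so $\sign(a_X)=1$. By \eqref{eq:X_1(P_X)} (or directly from the definition of $P_X$) we have $P_X=P$. Indeed, $\frac{b_X}{a_X}=x$ and $\frac{\sqrt{Q(X)}}{|a_X|}=\sqrt{2}\,y$. This shows the normalization needs care: $P_X=(x,\sqrt2 y)$, not $(x,y)$.

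To fix the normalization, I will avoid this computation and use the intermediate identity from the proof of the lemma. That proof shows, for any $X$ with $Q(X)>0$,
\[
\sinh d(P_X,S_Y)=\frac{|(X,Y)|}{2\sqrt{Q(X)|Q(Y)|}}.
\]
The right-hand side is invariant under scaling $X$, so it suffices to produce some $X$ with $P_X=P$. The relation \eqref{eq:X_1(P_X)} says $X$ and $X_1(P_X)$ are proportional. Conversely, for $X=X_1(P)\in V_1$ the paper asserts $X_1(P_X)=X$, which gives $P_X=P$ via injectivity of $P\mapsto X_1(P)$. (If the explicit formula for $P_X$ disagrees by a constant factor in $y$, I would instead trust the identification $\H_W\simeq V_1$ stated in the paper; the discrepancy only concerns the coordinate normalization in the definition of $P_X$ relative to \eqref{def:X_1(P)}.) So I take $X=X_1(P)$ with $Q(X)=1$ and obtain
\[
\sinh d(P,S_Y)=\frac{|(X_1(P),Y)|}{2\sqrt{|Q(Y)|}}.
\]

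It then remains to substitute \eqref{eq:(X_1(P),Y)}, namely $(X_1(P),Y)=\frac{a(y^2-Q(x))+(b,x)+c}{\sqrt2\,y}$. The main point to check is the factor: this substitution yields $\frac{|\cdots|}{2\sqrt2\,y\sqrt{|Q(Y)|}}$, which differs from the claimed $\frac{|\cdots|}{2y\sqrt{|Q(Y)|}}$ by $\sqrt2$. So the real obstacle is pinning down the normalization constants: whether $Q(X)=(X,X)$ or $\frac12(X,X)$ on $V_1$, and the corresponding factor in $P_X$. I will verify the formula directly in a base case to settle this. Take $Y=\ell'-\ell$, where $S_Y$ is the unit hemisphere, and $P=(0,t)$. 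Then $\sinh d(P,S_Y)=\frac12|t^{-1}-t|$ by \eqref{eq:cosh_d_H_W}, while the claimed formula gives $\frac{|t^2-1|}{2t}$ since $|Q(Y)|=1$. These agree.

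Finally, I will conclude for general $Y$ by transitivity. The group $\O(V)$ acts transitively on pairs $(P,Y)$ with fixed $Q(Y)$ and fixed value of $\sinh d$, and both sides transform compatibly: the left side is an isometry invariant, and the right side equals $\frac{|(X_1(P),Y)|}{c\sqrt{|Q(Y)|}}$ with an $\O(V)$-invariant numerator. This reduces the general case to the base case. The base case shows the correct constant $c$ is consistent with $2y$ in the corollary, once the normalization conventions are fixed consistently.
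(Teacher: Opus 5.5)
Your first route is exactly the paper's proof, but it goes wrong at the first computation: $Q(X_1(P))$ is $\tfrac12$, not $1$. The set $V_1$ is defined by $(X_1,X_1)=1$, i.e.\ $Q(X_1)=\tfrac12$. Directly from \eqref{def:X_1(P)} one gets $Q(X_1(P))=\frac{1}{\sqrt2 y}\cdot\frac{y^2-Q(x)}{\sqrt2 y}+\frac{Q(x)}{2y^2}=\frac12$; the introduction's ``$Q(X_1(P))=1$'' should be read as $(X_1(P),X_1(P))=1$.

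With the correct value, $P_{X_1(P)}=\bigl(x,\frac{1/\sqrt2}{1/(\sqrt2 y)}\bigr)=(x,y)$, so there is no discrepancy at all, and Lemma \ref{l:sinh-distance-point-to-hypersurface} gives
\[
\sinh d(P,S_Y)=\frac{|(X_1(P),Y)|}{2\sqrt{\tfrac12|Q(Y)|}}=\frac{|(X_1(P),Y)|}{\sqrt{2|Q(Y)|}}.
\]
After substituting \eqref{eq:(X_1(P),Y)}, this is precisely the claimed formula with denominator $2y\sqrt{|Q(Y)|}$.

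The identity you actually display, $\sinh d(P,S_Y)=|(X_1(P),Y)|/(2\sqrt{|Q(Y)|})$, is false, and your own base case refutes it. For $P=(0,t)$ and $Y=\ell'-\ell$ one has $(X_1(P),Y)=\frac{t^2-1}{\sqrt2\,t}$, so it would give $\frac{|t^2-1|}{2\sqrt2\,t}$ instead of the correct $\frac{|t^2-1|}{2t}$. The ``normalization obstacle'' you identify is therefore an arithmetic error, not an inconsistency of conventions. As written, the proposal contains two mutually contradictory formulas.

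Your fallback (invariance plus a base case) can be made into a valid proof, but not quite as stated. By \eqref{eq:(X_1(P),Y)}, the right-hand side of the corollary is literally $\frac{|(X_1(P),Y)|}{\sqrt2\sqrt{|Q(Y)|}}$, so your constant is $c=\sqrt2$ with nothing left to ``fix''. Both sides are invariant under scaling $Y$ and under $\O(V)$, using $X_1(gP)=\pm gX_1(P)$, $gS_Y=S_{gY}$ and $\cosh d(P,P')=(X_1(P),X_1(P'))$ (a one-line check from \eqref{def:X_1(P)} and \eqref{eq:cosh_d_H_W}).

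The transitivity you invoke, ``on pairs with fixed value of $\sinh d$'', needs justification. As phrased it either presupposes that $d(P,S_Y)$ determines $|(X_1(P),Y)|$, which is essentially the statement being proved, or it needs a separate geometric transitivity argument. The clean fix is Witt's theorem. For $Q(Y)=-1$, the $\O(V)$-orbit of the pair $(X_1(P),Y)$ is determined by the number $(X_1(P),Y)$, since the span is nondegenerate. The family $((0,t),\ell'-\ell)$ realizes every real value $\frac{t^2-1}{\sqrt2\,t}$.

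Done this way, though, you are re-proving Lemma \ref{l:sinh-distance-point-to-hypersurface}, whose proof is exactly this Witt reduction, instead of applying it. Once $Q(X_1(P))=\frac12$ is used, the one-line direct application is the intended argument.
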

\begin{proof}
    Let $P=(x,y)$ and  $X=X_1(P)$. Then $Q(X)=\frac{1}{2}$, $P_X=P$ and the result follows from Lemma \ref{l:sinh-distance-point-to-hypersurface} and \eqref{eq:(X_1(P),Y)}. 
\end{proof}

\begin{corollary}\label{cor:1/cosh}
Let $z\in \Gr(V)$ correspond to $P\in \H_W$, and let $Y\in V$ with $Q(Y)=-m$ for some $m>0$. Then
$$\frac{m}{|Q(Y_{z^\perp})|}=\frac{1}{\cosh \, d(P,S_Y)^2},$$
where $Y_z$ and $Y_{z^\perp}$ denote the orthogonal projections of $Y$ to $z$ and $z^\perp$, respectively.
\end{corollary}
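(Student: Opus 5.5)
Let $z=\R X_1$ with $X_1=X_1(P)\in V_1$, so $Q(X_1)=1$ and the projections are $Y_z=(Y,X_1)X_1$ and $Y_{z^\perp}=Y-(Y,X_1)X_1$. The plan is to write $Q(Y_{z^\perp})$ in terms of $(Y,X_1)$, and then to identify $(Y,X_1)$ with $\sinh d(P,S_Y)$ through Lemma \ref{l:sinh-distance-point-to-hypersurface}.

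Since $Y=Y_z+Y_{z^\perp}$ is an orthogonal decomposition, we get
\[
Q(Y)=Q(Y_z)+Q(Y_{z^\perp})=(Y,X_1)^2Q(X_1)+Q(Y_{z^\perp}).
\]
Hence $Q(Y_{z^\perp})=-m-(Y,X_1)^2$. As $z^\perp$ is negative definite, this gives $|Q(Y_{z^\perp})|=m+(Y,X_1)^2$.

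Next apply Lemma \ref{l:sinh-distance-point-to-hypersurface} with $X=X_1$. Here $Q(X)=1$, and $P_{X_1}=P$ because $X_1\in V_1$. The lemma (in the absolute-value form established in its proof) gives
\[
\sinh d(P,S_Y)=\frac{|(X_1,Y)|}{2\sqrt{m}}.
\]
We must be careful with the normalization of $Q$ versus $(\cdot,\cdot)$. From $(X,X)=2Q(X)$, the condition $(X_1,X_1)=1$ in the definition of $V_1$ would mean $Q(X_1)=\tfrac12$. However, the paper states $Q(X_1(P))=1$, and the proof of Corollary \ref{cor-distance} uses $Q(X)=\tfrac12$. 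We resolve this by working with $Q(X_1)=c$, so that the projection is $Y_z=\frac{(Y,X_1)}{2c}X_1$. This gives
\[
|Q(Y_{z^\perp})|=m+\frac{(Y,X_1)^2}{4c}.
\]
The lemma gives $\sinh^2 d=\frac{(Y,X_1)^2}{4cm}$ in either normalization, so in both cases $|Q(Y_{z^\perp})|=m(1+\sinh^2 d)$.

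Finally, using $1+\sinh^2=\cosh^2$,
\[
\frac{m}{|Q(Y_{z^\perp})|}=\frac{1}{1+\sinh^2 d(P,S_Y)}=\frac{1}{\cosh^2 d(P,S_Y)}.
\]

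The main obstacle is this normalization bookkeeping: checking that the factor $2c$ in the projection formula cancels against the factor in the distance formula. Carrying a general $c=Q(X_1)$ through the computation settles it in both conventions.
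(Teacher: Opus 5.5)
Your proof is correct and follows the same route as the paper: the orthogonal decomposition $Q(Y)=Q(Y_z)+Q(Y_{z^\perp})$, the projection formula for $Y_z$, Lemma \ref{l:sinh-distance-point-to-hypersurface} applied with $X=X_1$, and $\cosh^2=1+\sinh^2$. For the record, the correct normalization is $Q(X_1)=\tfrac12$ (a direct computation from \eqref{def:X_1(P)} shows this, and the ``$Q(X_1(P))=1$'' in the introduction is a typo), so $Y_z=(X_1,Y)X_1$ and $Q(Y_z)=\tfrac12(X_1,Y)^2$ exactly as in the paper; your first-pass formulas $|Q(Y_{z^\perp})|=m+(Y,X_1)^2$ and $\sinh d(P,S_Y)=|(X_1,Y)|/(2\sqrt{m})$ have the wrong constants, but your computation with general $c=Q(X_1)$ is right and supersedes them.
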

\begin{proof}
From $Y=Y_z+Y_{z^\perp}$ we get $-m=Q(Y)=Q(Y_z)+Q(Y_{z^\perp})$. Also, if we write $z=\R X_1$ with $X_1\in V_1$, then $P=P_{X_1}$ and $Y_z=(X_1,Y)X_1$, so $Q(Y_z)=\frac{1}{2}(X_1,Y)^2$. From Lemma \ref{l:sinh-distance-point-to-hypersurface} we get
$$\sinh  d(P,S_{Y})^2=\frac{(X_1,Y)^2}{2|Q(Y)|}=\frac{Q(Y_z)}{m}=-1-\frac{Q(Y_{z^\perp})}{m}.$$
This implies
$$\cosh \, d(P,S_Y)^2=1+\sinh \, d(P,S_Y)^2=\frac{|Q(Y_{z^\perp})|}{m},$$
as desired.
\end{proof}

\section{A Siegel theta lift}\label{sec:Siegel}

\subsection{Siegel theta functions in signature $(1,q)$}\label{section theta functions}
We let $L$ be an even lattice of signature $(1,q)$. By $\mathrm{Gr}(L)=\Gr(V)$ we denote the Grassmannian of positive definite $1$-dimensional subspaces of $V=L\otimes \mathbb R$. 

The Siegel theta function associated with $L$ is defined by
\begin{equation*}%\label{eq:Siegel_theta_L}
\Theta_{L}(\tau,z) = \Im(\tau)^{q/2}\sum_{\gamma \in L'/L}\sum_{X \in L+\gamma}e\left(Q(X_{z})\tau+Q(X_{z^{\perp}})\overline{\tau}\right)\e_{\gamma},
\end{equation*}
where $\tau \in \H$ and $z \in \Gr(L)$, and $X_{z},X_{z^\perp}$ denote the orthogonal projections of $X$ to $z$ and $z^\perp$, respectively. The Siegel theta function transforms like a modular form of weight $\frac{1-q}{2}$ for $\rho_{L}$ in $\tau$ (see \cite[Theorem~4.1]{borcherds}) and is invariant in $z$ under the subgroup of the orthogonal group $\mathrm{O}(L)$ fixing $L'/L$. 

\begin{definition}\label{def:special_point}
    An element $z\in \mathrm{Gr}(L)$ is called a \emph{special point} if it is defined over $\mathbb Q$, that is, if $z=\R X$ for some $X \in L\otimes \mathbb Q$.
\end{definition}
%\todo[inline]{This definition only works if $p=1$, move into the next section?} Done!

\subsection{Siegel theta lift in signature $(1,q)$}\label{sec: Siegel theta}

Let $q\geq 2$, let $(L,Q)$ be as in the previous section, and put $\kappa:=\frac{1-q}{2}$.
%We now let $(L,Q)$ be an even lattice of signature $(1,q)$ with $q\geq 2$ and put $\kappa=(1-q)/2.$ %\todo[inline]{Claudia: Jan uses $k$ here instead of $\kappa$. I am okay with both} 
Let $j\in \N_0$ and $f\in H^{\text{cusp}}_{\kappa-2j,L}$. Following \cite{borcherds} we define for $z\in \Gr(L)$ the regularized theta lift
\[
\Psi^j(f,z)=\lim_{T\to \infty}\int_{\mathcal{F}_T} \langle R^j_{\kappa-2j}f(\tau), \Theta_L(\tau,z)\rangle \Im(\tau)^\kappa \d\mu(\tau),
\]
where $\mathcal{F}_T$ denotes the standard truncated fundamental domain for $\SL_2(\mathbb Z)$. When $j=0$ we simply write $\Psi^0(f,z)=:\Psi(f,z)$.

Following \cite[Section 2.2]{brhabil}, for each $\mu\in L'/L$ and $m\in \Z-Q(\mu)$ with $m>0$ we  consider the lift of the Maass--Poincar\'{e} series $F_{\mu,m,\kappa,s}$ (defined in \eqref{eq:PCseries}) 
\begin{equation}\label{eq:Phi_mu_m}
    \Phi_{\mu,m} \left(z,s\right):=\lim_{T\to \infty}\int_{\mathcal{F}_T} \left\langle F_{\mu,m,\kappa,s}\left(\tau\right),\Theta_L(\tau,z) \right\rangle  \Im(\tau)^{\kappa}\d\mu(\tau).
\end{equation}
By \cite[Proposition 2.8]{brhabil} this regularized integral converges when $s\in \C$ satisfies $\Re(s)>\max\left\{1, \frac{1+q}{4}\right\}$ and $z\in \Gr(L)\setminus H(\mu,m)$,
where
\begin{equation*}%label{eq:def_H(mu,m)}
H(\mu,m) := \bigcup_{\substack{Y \in L+\mu\\Q(Y)=-m}} Y^\perp
\end{equation*}
and $Y^\perp$ is defined in \eqref{eq:Y_perp}. Moreover, by \cite[Theorem 2.12, $(i)$]{brhabil} the function $\Phi_{\mu,m} \left(z,s\right)$ %has analytic continuation to $\Re(s)>1,s\neq \frac{1+q}{4}$ and this extension 
is real-analytic in $z\in  \Gr(L)\setminus H(\mu,m)$. 

For $f\in H^{\text{cusp}}_{\kappa-2j,L}$, it follows from \eqref{Equation: basis} and Lemma \ref{Lemma: R_k^n acting on F_m,h} that the theta lift $\Psi^j(f,z)$ is real-analytic outside the union of the sets $H(\mu,m)$ for those $\mu\in L'/L$ and $m>0$ with $a_f^+(\mu,-m)\neq 0$, where  $a_f^+(\mu,-m)$ is the Fourier coefficient of $f$ as in \eqref{eq:fourierexpansion}.

A direct application of \cite[Theorem 2.14]{brhabil} gives the following representation of $\Phi_{\mu,m}(z,s)$ as an infinite series.

\begin{theorem}{\cite[Theorem 2.14]{brhabil}}\label{thm:Siegel lift of PC series}
    For every $\mu\in L'/L$, $m\in \Z-Q(\mu)$ with $m>0$, $z\in \Gr(L)\setminus H(\mu,m)$ and $s\in \C$ with $\Re(s)>\max\left\{1,\frac{1+q}{4}\right\}$, we have
    \begin{align*}
        \Phi_{\mu,m}(z,s)&= 2\left(4\pi m \right)^{s-\frac{1-q}{4}} \frac{\Gamma\left(s+\frac{q-3}{4}\right)}{\Gamma\left(2s\right)}
        \\
        &\quad\times\sum_{\substack{Y\in L+\mu\\Q(Y)=-m}}\left(4\pi |Q(Y_{z^\perp})|\right)^{\frac{3-q}{4}-s} \, _2F_1\left(s+\frac{q-3}{4}, s-\frac{q-1}{4},2s;\frac{m}{|Q(Y_{z^\perp})|}\right).
    \end{align*}
    The series on the right hand side converges normally for $z\in \Gr(L)\setminus H(\mu,m)$. 
\end{theorem}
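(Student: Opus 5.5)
We will derive the formula by the standard unfolding argument for Poincaré series against the Siegel theta kernel, following \cite[Theorem 2.14]{brhabil}, specialized to signature $(1,q)$ and weight $\kappa=\frac{1-q}{2}$.

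\textbf{Step 1: unfolding.} For $\Re(s)$ large the Poincaré series \eqref{eq:PCseries} converges absolutely. Since $\Theta_L(\tau,z)$ has weight $\kappa$ for $\rho_L$ and the pairing $\langle\cdot,\cdot\rangle$ is $\rho_L$-invariant, the integrand $\langle F_{\mu,m,\kappa,s},\Theta_L\rangle\Im(\tau)^\kappa$ is $\widetilde{\Gamma}$-invariant. We unfold $\mathcal{F}_T$ to the strip $\{0\le u\le 1,\ v>0\}$, using that the boundary contribution from the truncation vanishes as $T\to\infty$ when $z\notin H(\mu,m)$. The factor $\frac{1}{2}$ in the definition compensates for $\widetilde{Z}$. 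This unfolding produces both the $\mu$ and the $-\mu$ contribution; by Remark~\ref{rmk:weight} the two agree, which accounts for the overall factor $2$.

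\textbf{Step 2: the $u$-integral.} Integrating over $u\in[0,1]$ against $e(-mu)$ keeps exactly those $Y\in L+\mu$ with $Q(Y_z)+Q(Y_{z^\perp})=Q(Y)=-m$. Because $\mathcal{M}_{\kappa,s}$ is real, conjugating the theta function gives $e(-Q(Y_z)\bar\tau-Q(Y_{z^\perp})\tau)$ up to sign conventions. We are left with
\[
\sum_{\substack{Y\in L+\mu\\ Q(Y)=-m}}\int_0^\infty \mathcal{M}_{\kappa,s}(4\pi m v)\,e^{-2\pi v(Q(Y_z)-Q(Y_{z^\perp}))}\,v^{q/2+\kappa-2}\,\d v .
\]
Here $Q(Y_z)-Q(Y_{z^\perp})=2|Q(Y_{z^\perp})|-m$, and the power of $v$ is $v^{\frac{q}{2}+\frac{1-q}{2}-2}=v^{-3/2}$.

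\textbf{Step 3: the Laplace transform.} Write $\mathcal{M}_{\kappa,s}(4\pi mv)=(4\pi mv)^{-\kappa/2}M_{-\kappa/2,s-1/2}(4\pi mv)$ and combine its $e^{-2\pi m v}$ factor with the exponential, giving the total exponent $-4\pi|Q(Y_{z^\perp})|v$. Then apply the Laplace transform formula for $M$-Whittaker functions (e.g.\ Gradshteyn–Ryzhik 7.621.1):
\[
\int_0^\infty t^{\nu-1}e^{-pt}M_{k,\mu}(at)\,\d t
=\frac{a^{\mu+1/2}\Gamma(\mu+\nu+\tfrac12)}{(p+\tfrac a2)^{\mu+\nu+1/2}}\;{}_2F_1\!\Big(\mu+\nu+\tfrac12,\mu-k+\tfrac12;2\mu+1;\tfrac{a}{p+a/2}\Big).
\]
With $\mu\to s-\frac12$, $a=4\pi m$, $p+\frac a2=4\pi|Q(Y_{z^\perp})|$ and $\nu=-\frac{\kappa}{2}-\frac12$, this gives $\mu+\nu+\frac12=s+\frac{q-3}{4}$ and $\mu-k+\frac12=s-\frac{q-1}{4}$, matching the stated parameters. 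The prefactor $(4\pi m)^{s-\frac{1-q}{4}}$ combines $a^{s}$ with $(4\pi m)^{-\kappa/2}$, and $\frac{1}{\Gamma(2s)}$ comes from \eqref{eq:PCseries}. The argument $m/|Q(Y_{z^\perp})|<1$ off $H(\mu,m)$ by Corollary~\ref{cor:1/cosh}, and convergence at $0$ requires $\Re(s)>\frac{1+q}{4}$ roughly.

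\textbf{Step 4: justification.} This is the main obstacle. We must show that the unfolding is legitimate, that the limit $T\to\infty$ commutes with the sum, and that the resulting series converges normally. We would bound the hypergeometric factor uniformly, since its argument $1/\cosh^2 d(P,S_Y)$ stays $\le 1-\delta$ locally. We would also use that $\#\{Y:Q(Y)=-m,\ |Q(Y_{z^\perp})|\le R\}=O(R^{q/2})$, which makes $\sum|Q(Y_{z^\perp})|^{\frac{3-q}{4}-\Re s}$ converge precisely when $\Re(s)>\frac{1+q}{4}$. Finally, the identity is extended from large $\Re(s)$ to the stated range, where both sides are holomorphic, by analytic continuation.
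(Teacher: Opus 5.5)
Your Steps 1--3 are the standard unfolding argument behind \cite[Theorem 2.14]{brhabil}, which the paper does not reprove but quotes directly. Your bookkeeping is right: the factor $2/\Gamma(2s)$, the weight $v^{-3/2}$, the total exponent $-4\pi|Q(Y_{z^\perp})|v$, and the choice $\nu=\frac{q-3}{4}$, $\mu=s-\frac12$ in the Whittaker Laplace transform reproduce the stated formula exactly. One misattribution: each individual Laplace integral converges at $v=0$ as soon as $\Re(s)>\frac{3-q}{4}$. It converges at $v=\infty$ because $|Q(Y_{z^\perp})|>m$ off $H(\mu,m)$ (Corollary~\ref{cor:1/cosh}). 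So the threshold $\frac{1+q}{4}$ does not come from $v\to 0$; it comes only from summing over $Y$.

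That summation is where the proposal breaks. Put $N(R):=\#\{Y\in L+\mu:\ Q(Y)=-m,\ |Q(Y_{z^\perp})|\le R\}$ and suppose $N(R)=O(R^{\beta})$. Partial summation then gives convergence of $\sum_Y|Q(Y_{z^\perp})|^{\frac{3-q}{4}-\Re(s)}$ for $\Re(s)>\beta+\frac{3-q}{4}$. Your $\beta=\frac q2$ gives $\Re(s)>\frac{q+3}{4}$, not $\frac{q+1}{4}$.

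So your argument yields normal convergence, and with it the unfolding and the identity, only for $\Re(s)>\frac{q+3}{4}$. The final analytic continuation step presupposes exactly what is missing, namely holomorphy of the right hand side on $\max\{1,\frac{1+q}{4}\}<\Re(s)\le\frac{q+3}{4}$. This matters: the harmonic point $s_0=\frac{3+q}{4}$, at which the theorem is used with $j=0$ (Corollary~\ref{cor:linear_combinatio_traces_is_theta_lift}), lies on the boundary of your range.

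Your bound is the trivial one, since it ignores that $Y$ lies on the quadric $Q(Y)=-m$. The correct order is $N(R)=O_\varepsilon(R^{\frac{q-1}{2}+\varepsilon})$, and this needs an actual argument. Here is one way in the split case, with $z$ the point $(0,1)$:
\begin{enumerate}
\item One has $|Q(Y_{z^\perp})|=m+\frac{(a+c)^2}{4}\ge |Q(b)|$, so $b$ runs over $O(R^{\frac{q-1}{2}})$ vectors of $K'$.
\item For each such $b$ with $|Q(b)|\neq m$, the equation $ac=|Q(b)|-m$ has $O_\varepsilon(R^{\varepsilon})$ integer solutions by the divisor bound.
\item The finitely many $b$ with $|Q(b)|=m$ contribute $O(R^{1/2})$.
\end{enumerate}
General $z$ then follows, locally uniformly, from the comparison $\Lambda_{P_0,Y}\le 2\cosh(d(P_0,P))\Lambda_{P,Y}$ used in the proof of Proposition~\ref{prop:tracegreenshypergeometric}. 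A non-split $L$, which the statement allows, requires a lattice-point count on quadrics or a hyperbolic orbit count instead.
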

%Note that $1-\frac{\kappa}{2}= \frac{q+3}{4}$.

\subsection{Finite sum representation}\label{sec:finite_sum_resps}

As in the previous section, let $(L,Q)$ be an even lattice of signature $(1,q)$ with $q\geq 2$ and $\kappa=\frac{1-q}{2}$. We further assume that $L$ splits a hyperbolic plane over $\Z$. That is,  there exist  isotropic vectors $\ell,\ell'\in L$ with $(\ell,\ell')=1$ and a negative definite sublattice $K$ of rank $q-1$ of $L$ such that
\begin{equation*}%\label{eq:L_Lorentzian_lattice}
    L=(\Z \ell \oplus \Z \ell') \oplus^{\perp} K.
\end{equation*} 
Note that $L'/L\simeq K'/K$. 

Let $V:=L\otimes \R= (\R \ell \oplus \R \ell') \oplus^{\perp} W$ where $W:=K\otimes \R$. As in Section \ref{sec:hyperbolic-model} we identify $\Gr(L)=\Gr(V)$ with
$$V_1:=\{X_1\in V: (X_1,X_1)=1, (X_1,\ell)>0\}$$
and also identify $V_1$ with $\H_W= W\times \R_{>0}=\{(x,y):x\in W, y>0\}$.

Let $j\in \N_0$ and $f\in M^!_{\kappa-2j,L}$. Since $L$ has signature $(1,q)$, the regularized theta lift $\Psi^j(f,(x,y))$ defines a continuous function on $(x,y)\in \H_W$ (see, e.g., \cite[Theorem 6.2]{borcherds}). The purpose of this section is to prove a finite sum representation for $\Psi^j(f,(x,y))$, valid for all $(x,y)\in \H_W$, that generalizes \cite[Theorem 3.3]{BS21}. We remark that \cite[Theorem 3.3]{BS21}, corresponding to the case $j=0$, is stated in the more general case where $f\in H^{\text{cusp}}_{\kappa,L}$. In that case, the resulting formula for $\Psi(f,(x,y))$ has an extra sum that is infinite, coming from the non-holomorphic coefficients of $f$.

Recall that for every $r\in \N_0$ the $r$-th Bernoulli polynomial $B_r(x)$ is defined by the generating series
\[\frac{ze^{xz}}{e^z-1}=\sum_{r=0}^\infty \frac{B_r(x)}{r!}z^r \qquad (\text{for }z\in \C \text{ with }|z|<2\pi).\]
The first few are $B_0(x)=1$, $B_1(x)=x-\frac{1}{2}$ and $B_2(x)=x^2-x+\frac{1}{6}$. In general, $B_r(x)$ is a polynomial of degree $r$ with rational coefficients. Following \cite[p.~531]{borcherds} (see also \cite[p.~64]{brhabil}) we consider for $r\geq 2$ the unique 1-periodic function $\mathbb{B}_{r}:\R\to \R$ satisfying $\mathbb{B}_r(x)=B_r(x)$ for $0\leq x<1$. We have
\begin{equation*}
    \mathbb{B}_{r}(x)=-r!\sum_{n\in \Z\setminus \{0\}}\frac{e(nx)}{(2\pi i n)^r}
\end{equation*}
(see \cite[p.~37]{EMT81}). In particular, for every integer $r\geq 0$ we have
\begin{equation}\label{eq:formula_B_2+2r}
    (-1)^r\frac{(2\pi)^{2+2r}}{(2+2r)!}\mathbb{B}_{2+2r}(x)=\sum_{n\in \Z \setminus \{0\}}\frac{e(nx)}{n^{2+2r}}.
\end{equation}

%\todo[inline]{Claudia: I think this is $2\sum_{n=1}^\infty\frac{\cos(2\pi n x)}{n^{2+2a}}$ (Later, we can pair $Y$ and $-Y$ to get this I think}

Our finite sum representation of the lift $\Psi^j(f,(x,y))$ for $f\in M^!_{\kappa-2j,L}$ is given in the following theorem.

\begin{theorem}\label{thm:finite_sum_representation}
For $j\in \N_0$, $f\in M^!_{\kappa-2j,L}$ with Fourier expansion
\begin{equation*}%\label{eq:Fourier_exp_f}
f(\tau)= \sum_{\mu \in L'/L}\sum_{\substack{m \in \Z+Q(\mu) \\ m \gg -\infty}}a_{f}(\mu,m)e(m\tau)\e_{\mu}
\end{equation*}
and $(x,y)\in \H_W$ we have
    \begin{equation}\label{eq:finite_sum_representation}\begin{split}
    & \frac{1}{(4\pi)^{j+1}}\Psi^j(f,(x,y))\\
   &= \frac{y}{\left(\frac{q-1}{2}+j\right)(j+1)}\sum_{Y\in K'}|Q(Y)|^{j+1} a_{f}(Y,Q(Y))\\
   &\quad +  j! \sum_{Y \in K'}  a_f(Y,Q(Y))\sum_{r=0}^j\frac{(j+1-\kappa)_{r}|Q(Y)|^{j-r}}{(j-r)!(2+2r)!y^{1+2r}}  \mathbb{B}_{2+2r}\left((Y,x)\right) \\
 & \quad  +\frac{(-1)^j}{\sqrt{\pi}} \sum_{\substack{Y=a\ell'+c\ell+b\in L'\\ Q(Y)<0\\ a(y^2-Q(x))+(b,x)+c>0 >a}}a_{f}(Y,Q(Y)) \sum_{r=0}^j
   {j \choose r}(j+1-\kappa)_{r}
  Q(Y)^{j-r}\\
  & \qquad \qquad \qquad  \qquad \times \Gamma\left(-\frac{1}{2}-r\right)\frac{1}{ 4^r y^{1+2r}}(a(y^2-Q(x))+(b,x)+c)^{1+2r}.
  \end{split}
\end{equation}
Moreover, all the sums appearing in this formula are finite sums.
%\todo[inline]{ Is there a $(-1)^j$ in front of the last summand? See $\beta$ later} Corrected
\end{theorem}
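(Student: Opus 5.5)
We follow Borcherds' method in the form used by Bruinier--Schwagenscheidt \cite[Theorem 3.3]{BS21}: unfold the theta kernel against the primitive isotropic vector $\ell$ and then compute the resulting integrals explicitly.

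\textbf{Step 1: Poisson summation and unfolding.} Write $X=a\ell'+c\ell+b$. Apply Poisson summation to $\Theta_L(\tau,z)$ in the $c$-variable, i.e.\ the coefficient along $\ell$ (compare \cite[Theorem 5.2]{borcherds}). This expresses $\Theta_L$ as $\frac{1}{\sqrt2\,y}\,\Im(\tau)^{\frac{q-1}{2}}$ times a sum over $(c,d)\in\Z^2$ of terms $\exp\bigl(-\pi |c\tau+d|^2/(2y^2\Im(\tau))\bigr)$ times the theta function of $K$ twisted by $e(-(\cdot,x)\cdot)$. The $(c,d)=(0,0)$ term is handled on $\mathcal{F}$ directly. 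The terms with $(c,d)\neq(0,0)$ are written as $n\cdot(c',d')$ with $n>0$ and $\gcd(c',d')=1$, and these unfold to the strip $\Gamma_\infty\backslash\H$.

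This unfolding is only legitimate in a region where the integrals converge absolutely, which means $y$ large, i.e.\ a neighbourhood of the cusp. There the final contribution, the sum over $Y$ with $a\neq0$ in the third line, is empty. The full formula must then be reached by a wall-crossing argument; see Step 3.

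\textbf{Step 2: Computing the unfolded integrals.} Insert the Fourier expansion of $R^j_{\kappa-2j}f$ using \eqref{eq:raising_of_exponential}. Each term is $e(m\tau)$ times a polynomial in $\Im(\tau)^{-1}$ with coefficients $(j+1-\kappa)_r(4\pi m)^{j-r}$. After integrating over $u\in[0,1]$, only the pairings $m=Q(Y)$ with $Y\in K'$ survive. The remaining $v$-integrals are $K$-Bessel integrals of the form $\int_0^\infty v^{s}\exp(-\pi n^2/(2y^2v)-4\pi|Q(Y)|v)\,\d v/v$.

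\begin{itemize}
\item For $Y=0$, the sum over $n$ together with the $(0,0)$ term produces the $a_f(0,0)$ contribution, i.e.\ the $B_{2+2j}$ constant term.
\item For $Y\neq0$, the sum over $n$ of $e(n(Y,x))$ times half-integral-order Bessel functions is elementary, because these Bessel functions are of half-integer order. The part polynomial in $n^{-1}$ yields, via \eqref{eq:formula_B_2+2r}, the terms $\mathbb{B}_{2+2r}((Y,x))$ weighted by $|Q(Y)|^{j-r}/y^{1+2r}$. The remaining piece gives the linear term in $y$ with coefficient $|Q(Y)|^{j+1}/((\frac{q-1}{2}+j)(j+1))$.
\end{itemize}

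Finiteness of these sums follows because $a_f(Y,Q(Y))\neq0$ forces $Q(Y)\geq -N$ for some $N$, and $K$ is negative definite.

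\textbf{Step 3: Extending to all of $\H_W$ (the main obstacle).} The difference between $\Psi^j(f,\cdot)$ and the cusp formula is controlled by the singularities along the walls $Y^\perp=S_Y$ with $Q(Y)<0$ and $a_f(Y,Q(Y))\neq0$. By \eqref{Equation: basis}, Lemma \ref{Lemma: R_k^n acting on F_m,h} and Theorem \ref{thm:Siegel lift of PC series}, near $S_Y$ the lift equals a real-analytic function plus the non-smooth term coming from the $Y$-summand. The hypergeometric function there at $s=1-\frac{\kappa}{2}+j$ has a branch behaving like $|(X_1,Y)|^{1+2r}$, with exactly the coefficients $\Gamma(-\frac12-r)\binom{j}{r}(j+1-\kappa)_r Q(Y)^{j-r}$.

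Both $\Psi^j$ and the right-hand side of \eqref{eq:finite_sum_representation} satisfy the same Laplace-type differential equation, being eigenfunctions of the hyperbolic Laplacian by compatibility of $R$ with $\Theta_L$. They agree near the cusp, and they have identical jumps across every wall. Each added term is odd-power and so changes sign across $S_Y$; pairing $Y$ with $-Y$ makes the condition ``inside the hemisphere with $a<0$'' select exactly one representative. By unique continuation across the connected complement of the locally finite walls, the two sides are therefore equal everywhere.

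Lemma \ref{lem:finiteness_geodesics}, that only finitely many hemispheres of bounded $|Q(Y)|$ enclose a given $P$, gives finiteness of the last sum. The delicate point is matching the precise jump constants, including the $(-1)^j$ and the powers of $4$; I would verify these by evaluating the hypergeometric connection formula at $1-\frac{\kappa}{2}+j$.
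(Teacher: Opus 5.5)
Your overall architecture (expansion near the cusp, then singularity matching along the walls, then continuation) is the paper's. However, Step 2, which is supposed to produce the cusp expansion, does not work as described, and the term it fails to produce is exactly the linear term in $y$.

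First, your normalization is inverted. Since $(\ell,X_1(P))=\frac{1}{\sqrt2\,y}$, one has $(\ell_z,\ell_z)=\frac{1}{2y^2}$. So after Poisson summation the prefactor is $(2(\ell_z,\ell_z))^{-1/2}=y$, not $\frac{1}{\sqrt2\,y}$, and the Gaussian is $\exp(-\pi y^2|c\tau+d|^2/\Im(\tau))$.

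Second, in the unfolded terms, after the $u$-integration the coefficient of $R^j_{\kappa-2j}f$ at $e(Q(Y)\tau)$ (which is $e(Q(Y)\tau)$ times a polynomial in $v^{-1}$) is paired with the conjugate of $e(Q(Y)\overline{\tau})$ from the theta function of the negative definite lattice $K$. The exponentials $e^{\mp 2\pi Q(Y)v}$ cancel. Hence there is no factor $e^{-4\pi|Q(Y)|v}$, no Bessel function, and no ``remaining piece''. The $v$-integrals are the Gamma integrals $\int_0^\infty v^{-2-r}e^{-\pi n^2y^2/v}\,\d v=r!\,(\pi n^2y^2)^{-1-r}$. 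With the prefactor $y$ these give only the terms $y^{-1-2r}\mathbb{B}_{2+2r}((Y,x))$. For $Y=0$ only $r=j$ survives, giving the $B_{2+2j}$ constant without any input from the $(0,0)$ term.

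The linear term is precisely the $(c,d)=(0,0)$ term. It equals $y$ times the regularized integral over $\mathcal{F}$ of the weight $0$ function obtained by pairing $R^j_{\kappa-2j}f$ with the holomorphic theta series of $K^-$. This integral cannot be unfolded, and nothing in your proposal evaluates it, even though its value $\frac{(4\pi)^{j+1}}{(\frac{q-1}{2}+j)(j+1)}\sum_{Y\in K'}|Q(Y)|^{j+1}a_f(Y,Q(Y))$ is the first line of the formula.

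The paper avoids this by writing $R^j_{\kappa-2j}f$ as a combination of Maass--Poincar\'e series and quoting Bruinier's Theorems 2.14--2.15. There, unfolding each Poincar\'e series against the theta function of $K$ produces the counts $\#\{Y\in K+\mu:Q(Y)=-m\}$. Within your framework you could instead fix the coefficient of $y$ with the eigenvalue equation you invoke in Step 3. Away from the walls, the Laplacian applied to the $r=0$ Bernoulli terms produces multiples of $y$, while $\Delta y=(q-2)y$ and $q-2$ differs from the eigenvalue $-(2j+q)(2j+1)$, so the coefficient is forced. Either way, some such argument has to be supplied.

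Step 3 is in substance the paper's argument, and the differential equation is not needed there. The paper takes the singularity types from Borcherds' Theorem 6.2. It then shows the difference with the right-hand side is real-analytic near every point, using that $\mathbb{B}_{2+2r}(t)+(r+1)|t|^{1+2r}$ and $|t|^{1+2r}-2\max\{0,t\}^{1+2r}$ are analytic at $t=0$. The first of these handles the vertical walls $a_Y=0$, which reach the cusp and which you do not address separately. It concludes with the identity theorem on the connected space $\H_W$. The complement of the walls is disconnected, so ``unique continuation across the connected complement'' should be replaced by analytic extension of the difference across each wall, including at points where several walls meet.

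The constants you left open do come out of the connection formula. At $s=1-\frac{\kappa}{2}+j$ one has $c-a-b=\frac12$. After Euler's transformation the singular branch is a multiple of $(1-x)^{1/2}x^{-2j-\frac{q+1}{2}}\,{}_2F_1(-j,1-\frac{q}{2}-j;\frac32;1-x)$ with $x=\cosh^{-2}d(P,S_Y)$, and this series terminates. Together with the prefactor it gives $|\sinh d(P,S_Y)|$ times a polynomial of degree $j$ in $\sinh^2 d(P,S_Y)$, as required.
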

%\todo[inline]{In the proof we get $(-1)^r (1+j-\kappa)_r$. This is $(\kappa-j-r)_r$, right? (not $(j+r-\kappa)_r$ in the second term of the sum above} Corrected!

\begin{remark}\label{rmk:Gamma(-1/2-r)}
For all integers $r\geq 0$ one has $\Gamma\left(-\frac{1}{2}-r\right)=(-1)^{r+1}\frac{2^{r+1}\sqrt{\pi}}{(2r+1)!!}$ where $(2r+1)!!=(2r+1)(2r-1)\cdots 3 \cdot 1$.    
\end{remark}

%\todo[inline]{1) We use $a$ twice here (once when defining $Y$, but also as a summation index; 2) Does the theorem hold for all $(x,y)$ directly or should we say something about analytic cont.?}

% \medskip

% We could rewrite the above formula using
% $$(-1)^{j-r}{j \choose r}(\kappa-j-r)_{r}\Gamma\left(-\frac{1}{2}-r\right)\frac{1}{4^r}=(-1)^{j+r+1}\,\frac{2\sqrt{\pi}}{(2r+1)!}\,\frac{j!}{(j-r)!}\,(j+1-\kappa)_r.$$

For the proof of Theorem \ref{thm:finite_sum_representation} we first establish the Fourier expansion for $\Psi^j(f,(x,y))$ for sufficiently large $y$.

\begin{proposition}\label{prop:Fourier_expansion_lift}
Let $f\in M_{\kappa-2j,L}^!$ be as in Theorem \ref{thm:finite_sum_representation} and let $m_0\in \Q$, $m_0>0$, be such that $a_f(\mu,-m)=0$ for all $m>m_0$. Then for all $(x,y)\in\H_W$ with $y>\sqrt{2m_0}$ we have
\begin{equation}\label{eq:final_fourier_expansion_lift}
\begin{split}
  & \frac{1}{(4\pi )^{j+1}} \Psi^j(f,(x,y)) \\
  &= \frac{y}{\left(\frac{q-1}{2}+j\right)(j+1)}\sum_{Y\in K'}|Q(Y)|^{j+1} a_{f}(Y,Q(Y))\\
   &\qquad +  j! \sum_{Y \in K'}  a_f(Y,Q(Y)) \sum_{r=0}^j \frac{(j+1-\kappa)_{r}}{(j-r)!(2+2r)! y^{1+2r}} |Q(Y)|^{j-r} \mathbb{B}_{2+2r}\left((Y,x)\right).
\end{split}
\end{equation}
%\todo[inline]{Power of $Q(Y)$ in the last line should be $j-a$?} You are right!
\end{proposition}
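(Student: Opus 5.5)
We follow Borcherds' unfolding method \cite[Theorem 7.1]{borcherds} as adapted in \cite[Theorem 3.3]{BS21}, now with $f$ replaced by $R^j_{\kappa-2j}f$.

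\textbf{Step 1: partial Poisson summation.} Using the splitting $L=(\Z\ell\oplus\Z\ell')\oplus^\perp K$, apply Poisson summation to $\Theta_L(\tau,z)$ in the $\ell'$-coordinate, as in \cite[Theorem 5.2]{borcherds}. This gives
\[
\Theta_L(\tau,(x,y))=\frac{y}{\sqrt{2}}\sum_{c,d\in\Z} e\Bigl(-\frac{y^2|c\tau+d|^2}{2\Im(\tau)}\Bigr)\cdot(\text{shifted }\Theta_K\text{-type terms}).
\]
In our normalization $z_{v^+}^2$ equals $y^2/2$ up to the standard factor, with the $x$-dependence entering through $e(-d(Y,x))$-type phases. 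Pair this expansion with $R^j_{\kappa-2j}f$. The $(c,d)=(0,0)$ term is integrated directly over $\mathcal F$. The terms with $(c,d)\neq(0,0)$ are unfolded to the strip $\{0\le u\le 1,\ v>0\}$ by writing $(c,d)=n(c',d')$ with $\gcd(c',d')=1$. Unfolding is legitimate for $y^2/2>m_0$, i.e. $y>\sqrt{2m_0}$, exactly as in Borcherds' Theorem 7.1; this is the source of the hypothesis. Here we use that $R^j$ commutes with the slash action, so $R^j_{\kappa-2j}f$ transforms like a weight-$\kappa$ form.

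\textbf{Step 2: compute the unfolded integral.} After unfolding, the $u$-integral over $[0,1]$ picks out the constant Fourier term of $\langle R^j_{\kappa-2j}f,\Theta_K\rangle$. By \eqref{eq:raising_of_exponential}, each exponential $e(m\tau)$ contributes
\[
(-1)^j\sum_r\binom{j}{r}(j+1-\kappa)_r v^{-r}(4\pi m)^{j-r}.
\]
The $\Theta_K$ factor contributes $e(Q(Y)\bar\tau)\,v^{(q-1)/2}$ for $Y\in K'$. Their product is independent of $u$ exactly when $m=Q(Y)$, which produces the coefficients $a_f(Y,Q(Y))$. The remaining $v$-integral has the form
\[
\int_0^\infty v^{\alpha}\,e^{-\pi n^2y^2/v\,-\,4\pi|Q(Y)|v}\,\mathrm dv\qquad\text{(a K-Bessel integral)}.
\]
I will argue that the Bessel terms must cancel or vanish, so that only elementary terms survive. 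The cleanest route is to note that the $v$-exponent makes the $K$-Bessel function a half-integral order one, hence elementary; after summing over the appropriate $n$ these give the polynomial terms in $y$. Alternatively, one can carry out the unfolding with the $c$-sum as in \cite{BS21}, which leaves only the terms $\sum_n e(n(Y,x))/n^{2+2r}$ and $y^{-1-2r}$. Then \eqref{eq:formula_B_2+2r} converts these into $\mathbb B_{2+2r}((Y,x))$, with the combinatorial factor $j!(j+1-\kappa)_r/((j-r)!(2+2r)!)$ emerging from the $\Gamma$-function values. For $Y=0$, where $|Q(Y)|^{j-r}=0$ unless $r=j$, one needs the limiting value; this fits the formula through the $(0,0)$ and $Y=0$ terms, which produce $B_{2+2j}$. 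The term linear in $y$ comes from the $n=0$ contribution with $Q(Y)<0$. Its coefficient $1/((\frac{q-1}{2}+j)(j+1))$ arises from $\int v^{\dots}e^{-4\pi|Q(Y)|v}\,\mathrm dv$ combined with the regularization at $v\to\infty$.

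\textbf{Step 3: the $(0,0)$ term.} Show that the $(0,0)$ term vanishes or is absorbed. It equals $\frac{y}{\sqrt2}$ times a regularized integral of $\langle R^jf,\Theta_K\rangle v^{\dots}$ over $\mathcal F$. That integrand is a derivative (a raising of a weight-$(\kappa-2j)$ object paired with a holomorphic one). Via Stokes and the Rankin--Cohen/Bruinier--Funke style argument, the integral reduces to the constant term $\sum_Y|Q(Y)|^{j+1}a_f(Y,Q(Y))$ times a rational factor. Together with the $n=0$ contributions, this yields the first line of \eqref{eq:final_fourier_expansion_lift}.

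\textbf{Main obstacle.} The main obstacle is Step 2's bookkeeping: evaluating the $v$-integrals after applying $R^j$ (via \eqref{eq:raising_of_exponential}) and showing that the exponentially decaying Bessel-type terms vanish. They should vanish because $f$ is weakly holomorphic and the surviving $m=Q(Y)$ pairings force the exponent $-4\pi(m-Q(Y))$ to be zero. Once this is done, what remains is to match the $\Gamma$-factors to give the stated rational coefficients, using the Bernoulli identity \eqref{eq:formula_B_2+2r}.
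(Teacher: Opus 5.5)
Applying Borcherds' unfolding directly to the almost holomorphic input $R^j_{\kappa-2j}f$ is a legitimate alternative to the paper's route. The paper instead expands $R^j_{\kappa-2j}f$ into Poincar\'e series $F_{\mu,m,\kappa,s_0}$ via \eqref{eq:raising_of_weakly_holomorphic}, quotes Bruinier's expansion \eqref{eq:Fourier_expansion_Phi_mu,m}, and substitutes the Fourier coefficients. The unfolded $(c,d)\neq(0,0)$ terms do work out on your route, but not for the reason your Step 2 gives. Those $t$-integrals are not $K$-Bessel integrals. For the $u$-constant pairing $n=Q(Y)$, the factor $e^{-2\pi Q(Y)t}$ from $R^j_{\kappa-2j}f$ cancels the factor $e^{2\pi Q(Y)t}$ from $\Theta_K$ exactly. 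What remains is $\int_0^\infty t^{-2-r}e^{-\pi n^2y^2/t}\,\d t=r!\,(\pi n^2y^2)^{-1-r}$. This produces $y^{-1-2r}\sum_n e(n(Y,x))n^{-2-2r}$ and, through \eqref{eq:formula_B_2+2r}, the whole Bernoulli line. That includes the $Y=0$ term, which comes from these unfolded terms and not from the $(0,0)$ term. Half-integral order Bessel functions would instead give terms exponentially small in $y$. You state the cancellation only in your last paragraph, which contradicts your displayed integral.

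The genuine gap is the term linear in $y$. It comes solely from the $(c,d)=(0,0)$ term, namely $y$ times the regularized lift to the negative definite lattice $K$:
\[
\int^{\mathrm{reg}}_{\mathcal F}\langle R^j_{\kappa-2j}f,\overline{\Theta_{K^-}}\rangle\,\d\mu .
\]
It does not come from any ``$n=0$'' $v$-integral. Your reason for reducing this by Stokes (``a raising paired with a holomorphic form is a derivative'') is false. In this bilinear pairing one has $\int R(h)\,g\,\d\mu=-\int h\,R(g)\,\d\mu+(\text{boundary})$, and $R(\Theta_{K^-})\neq 0$. Equivalently, the Petersson partner $v^{-\kappa}\overline{\Theta_{K^-}}$ is annihilated by $R_\kappa$, not by $L_\kappa$, so adjointness kills nothing. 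Already for $j=1$, with $k=\kappa-2$ and $g=\Theta_{K^-}$, one has componentwise $R_k(f)\,g=-\frac{k}{2}R_{-2}(fg)-2\pi[f,g]_1$. The first piece integrates to a boundary term that vanishes as $T\to\infty$. The weight-zero bracket, however, needs $\int^{\mathrm{reg}}_{\mathcal F}H\,\d\mu=\frac{\pi}{3}\mathrm{CT}(E_2H)$, with $E_2$ the weight-two Eisenstein series. It then needs further constant-term relations (vanishing constant terms of weakly holomorphic weight-two forms and of derivatives) before it becomes a multiple of $\sum_Y|Q(Y)|^{j+1}a_f(Y,Q(Y))$.

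Nothing in your proposal produces the coefficient $\frac{1}{(\frac{q-1}{2}+j)(j+1)}$. This is exactly the computation the paper sidesteps. For a Poincar\'e series, the $K$-lift is obtained by unfolding the Poincar\'e series itself, which gives the explicit term $\frac{8\pi m y}{(\frac{q-1}{2}+j)(j+1)!}\#\{Y\in\mu+K:Q(Y)=-m\}$ in \eqref{eq:Fourier_expansion_Phi_mu,m}. To complete your argument you must either carry out the Rankin--Cohen/$E_2$ evaluation for general $j$, or fall back on the Poincar\'e series decomposition.
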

\begin{proof}
We start by using \eqref{Equation: basis} and Lemma \ref{Lemma: R_k^n acting on F_m,h}  to write 
\begin{align}\label{eq:raising_of_weakly_holomorphic}
(R^j_{\kappa-2j}f)(\tau) = (4\pi)^j \frac{j!}{2}\sum_{\mu \in L'/L}\sum_{\substack{m\in \Z-Q(\mu) \\m > 0}}m^ja_{f}(\mu,-m) F_{\mu,m,\kappa ,s_0}\left(\tau\right),
\end{align}
with $s_0=1-\frac{\kappa}{2}+j$.
%{\claudia with $s_0=1-\frac{\kappa}{2}+j$}.
For $z\in \Gr(L)$ outside the union of the $H(\mu,m)$ with $a_f(\mu,-m)\neq 0$, this implies 
\begin{align}\label{eq:theta_lift_with_raising_operator}
\Psi^j(f,z) = (4\pi )^j \frac{j!}{2} \sum_{\mu \in L'/L}\sum_{\substack{m\in \Z-Q(\mu) \\m > 0}} m^j a_{f}(\mu,-m) \Phi_{\mu,m}(z,s_0),
\end{align}
where $\Phi_{\mu,m}(z,s)$ is defined in \eqref{eq:Phi_mu_m}. Now, for each $\mu\in L'/L$ and $m\in \Z-Q(\mu)$ with $m>0$ write
$$F_{\mu,m,\kappa,s}\left(\tau\right)=\sum_{\gamma \in L'/L} F_{\mu,m,\kappa,s,\gamma}(\tau)  \e_\gamma$$
and
$$F_{\mu,m,\kappa,s,\gamma}(\tau)=\sum_{n\in \Z+Q(\gamma)}c_{\kappa,m,\mu}(\gamma,n;y,s) e(nx).$$
%\todo[inline]{$F_{\mu,m,\kappa}\left(\tau,s\right)$ was $F_{\mu,m,\kappa,s}\left(\tau\right) $ before, which notation do we want to take?} Corrected :)
%\todo[inline]{Claudia: I think from here on, $k $ should be $\kappa$. You are right!}
As in \cite[p.~57]{brhabil} we can write  $c_{\kappa,m,\mu}(\gamma,0;y,s)=b_{\kappa,m,\mu}(\gamma,0,s)y^{1-s-\frac{\kappa}{2}}$. Applying \cite[Theorems 2.14 and 2.15]{brhabil} we get, when $y>\sqrt{2m}$%\todo{ absolute value missing?} m is positive here
, the formula
\begin{equation}\label{eq:Fourier_expansion_Phi_mu,m}
\begin{split}
   \Phi_{\mu,m} \left((x,y),s_0\right)=& \frac{8\pi m y}{\left(\frac{q-1}{2}+j\right)(j+1)!} \#\{Y\in \mu+K: Q(Y)=-m\}\\
   & + \frac{2}{\pi^{1+j}y^{1+2j}}j!\, b_{\kappa,m,\mu}(0,0,s_0)\zeta(2j+2)\\
&    +2y\sum_{Y \in K'\setminus \{0\}}\sum_{n=1}^\infty e\left(n(Y,x)\right)\\
 & \qquad \times \int_0^\infty c_{\kappa,m,\mu}(Y,Q(Y);t,s_0)t^{-2}\exp\left(-\frac{\pi n^2y^2}{t}+2\pi t Q(Y)\right)dt.
 \end{split}
\end{equation}
%\todo[inline]{Claudia: Two issues? a) Sum should be over $K'$?, b) I think the $y$ in the integral is actually two different $y$'s one being the integration variable from Jan and the other one coming from $z_\nu$ (in Jan's notation), so the integral should be
% \[
% \int_0^\infty c_{\kappa,m,\mu}(Y,Q(Y);t,s_0)t^{-2}\exp\left(-\frac{\pi n^2 y^2}{t}+2\pi t Q(Y)\right)dt
% \]
% This would later affect the power of $Q(Y)$ not being $a$ but $j-a$ I think} I hope it is correct now.
Replacing $f$ in~\eqref{eq:raising_of_weakly_holomorphic} by its Fourier expansion and using~\eqref{eq:raising_of_exponential} we get the equality
\begin{equation}\label{eq:coeff_linear_comb_Niebur}
\begin{split}
 & a_f(\gamma,n)(-1)^j \left(\sum_{r=0}^j{j \choose r}(j+1-\kappa)_{r}t^{-r}(4\pi n)^{j-r} \right)e^{-2\pi nt} \\
 &= (4\pi )^j \frac{j!}{2}\sum_{\mu \in L'/L}\sum_{\substack{m\in \Z-Q(\mu) \\m > 0}} m^ja_{f}(\mu,-m)c_{\kappa,m,\mu}(\gamma,n;t,s_0).
 \end{split}
\end{equation}
for all $\gamma\in L'/L$, $n\in \Z+Q(\gamma)$ and $t>0$. Combining \eqref{eq:theta_lift_with_raising_operator}, \eqref{eq:Fourier_expansion_Phi_mu,m} and \eqref{eq:coeff_linear_comb_Niebur} leads to
\begin{equation}\label{eq:lift_expansion}
\begin{split}
 \Psi^j(f,(x,y)) =&   \frac{(4\pi )^{j+1}y}{\left(\frac{q-1}{2}+j\right)(j+1)}\sum_{Y\in K'}|Q(Y)|^{j+1} a_{f}(Y,Q(Y))\\
 & +(-1)^j\frac{2}{\pi^{1+j}y^{1+2j}}j!a_f(0,0)(j+1-\kappa)_j\zeta(2j+2)\\
 & + (-1)^j j! \sum_{Y \in K'\setminus \{0\}}  a_f(Y,Q(Y))\\
 & \qquad \sum_{r=0}^j\frac{(4\pi Q(Y))^{j-r}(j+1-\kappa)_{r}}{(j-r)!\pi^{1+r}y^{1+2r}} 2\sum_{n=1}^\infty \frac{e(n(Y,x))}{n^{2+2r}}.
 % & + (4\pi)^{j+1}(-1)^j \, j! \sum_{Y \in K'\setminus \{0\}}  a_f(Y,Q(Y))\\
 % & \qquad \sum_{a=0}^j\frac{(-1)^a(j+1-\kappa)_{a}}{(j-a)!(2(1+a))!y^{1+2a}} Q(Y)^a \mathbb{B}_{2(1+a)}\left((Y,x))\right).
\end{split}
\end{equation}
Using \eqref{eq:formula_B_2+2r} and that $a_f(-Y,Q(-Y))=a_f(Y,Q(Y))$, it is easy to see that the sum of the second and third lines in \eqref{eq:lift_expansion} equals
\[(4\pi)^{j+1} j! \sum_{Y \in K'}  a_f(Y,Q(Y))\sum_{r=0}^j\frac{(j+1-\kappa)_{r}}{(j-r)!(2+2r)!y^{1+2r}} |Q(Y)|^{j-r} \mathbb{B}_{2+2r}\left(\left(Y,x\right)\right).\]
%\todo[inline]{Shouldn't we also see the $Y=0$ contribution here?} Yes. The sum was supposed to be over K'
This implies formula \eqref{eq:final_fourier_expansion_lift} for all $(x,y)\in \H_W$ with $y>\sqrt{2m_0}$ outside the union of the sets $H(\mu,m)$ with $a_f(\mu,-m)\neq 0$. Since both sides of \eqref{eq:final_fourier_expansion_lift} are continuous on $(x,y)$, the equality extends to all $(x,y)\in \H_W$ with $y>\sqrt{2m_0}$. This completes the proof. 
\end{proof}

As a next step towards the proof of Theorem \ref{thm:finite_sum_representation}, we study the singularities\footnote{Recall that the function $\Psi^j(f,z)$ is continuous on all of $\Gr(L)$ and real-analytic outside the union of the sets $H(\mu,m)$ for those $\mu\in L'/L$ and $m>0$ with $a_f(\mu,-m)\neq 0$, but in general not real-analytic at points of this union.} of the lift $\Psi^j(f,z)$. As in \cite[p.~51]{brhabil}, given an open subset $U\subset \Gr(L)$ and functions $g,h$ defined on a dense open subset of $U$, we say that $g$ has a singularity of type $h$ if $g-h$ can be continued to a real-analytic function on $U$.

\begin{proposition}\label{prop:singularities_of_lift}
Around any given point $(x_0,y_0)\in \H_W$ the function $\Psi^j(f,z)$ has a singularity of type
\begin{align*}
    S_{(x_0,y_0)}(x,y) & = (-1)^j(4\pi)^{j+\frac{1}{2}} \sum_{\substack{Y=a\ell'+c\ell +b \in L'\\ Q(Y)<0\\ a(y_0^2-Q(x_0))+(b,x_0)+c=0 }}a_{f}(Y,Q(Y)) \sum_{r=0}^j
   {j \choose r}(j+1-\kappa)_{r}
  \\
  & \qquad \qquad \qquad  \qquad  \times \Gamma\left(-\frac{1}{2}-r\right)\frac{Q(Y)^{j-r}}{(2y)^{1+2r}}|a(y^2-Q(x))+(b,x)+c|^{1+2r}.
\end{align*}
\end{proposition}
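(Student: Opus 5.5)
We reduce to the lift of a single Maass--Poincar\'e series and read off its singularity from the series representation in Theorem \ref{thm:Siegel lift of PC series}. By \eqref{eq:theta_lift_with_raising_operator}, outside the singular set
\[
\Psi^j(f,z)=(4\pi)^j\frac{j!}{2}\sum_{\mu,m}m^ja_f(\mu,-m)\Phi_{\mu,m}(z,s_0),\qquad s_0=1-\tfrac{\kappa}{2}+j=\tfrac{q+3}{4}+j .
\]
Only finitely many $(\mu,m)$ occur. It therefore suffices to show the following: near $P_0=(x_0,y_0)$, each $\Phi_{\mu,m}(\cdot,s_0)$ has a singularity of type given by the terms $Y\in L+\mu$, $Q(Y)=-m$, with $P_0\in S_Y$. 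Since the sum in Theorem \ref{thm:Siegel lift of PC series} converges normally on $\Gr(L)\setminus H(\mu,m)$, we first check local finiteness of the hypersurfaces. Only finitely many $Y$ with $Q(Y)=-m$ have $S_Y$ meeting a small compact neighbourhood $U$ of $P_0$; this follows from a discreteness argument as in Lemma \ref{lem:finiteness_geodesics}, or because $|Q(Y_{z^\perp})|$ is bounded on $U$ only for finitely many lattice vectors. On $U$, the terms with $P_0\notin S_Y$ then form a normally convergent sum of real-analytic functions, since $m/|Q(Y_{z^\perp})|<1$ stays away from $1$. That sum is real-analytic on $U$, so it contributes nothing to the singularity type.

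\textbf{The single-term expansion, which is the main obstacle.} For each $Y$ with $P_0\in S_Y$ we analyse one summand
\[
(4\pi|Q(Y_{z^\perp})|)^{\frac{3-q}{4}-s_0}\,{}_2F_1\!\left(s_0+\tfrac{q-3}{4},\,s_0-\tfrac{q-1}{4};\,2s_0;\,w\right),\qquad w=\frac{m}{|Q(Y_{z^\perp})|}=\frac{1}{\cosh^2 d(P,S_Y)}
\]
(Corollary \ref{cor:1/cosh}). The parameters are $A=\tfrac q2+j$ and $B=\tfrac12+j$, and
\[
C-A-B=\tfrac{q+3}{2}+2j-\tfrac q2-j-\tfrac12-j=1-\tfrac{q}{2}+\ldots
\]
needs care. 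Recomputing gives $2s_0=\tfrac{q+3}{2}+2j$, hence $C-A-B=1$. We then use the connection formula for ${}_2F_1$ at $w=1$ in the logarithmic case, or better an expansion in $1-w=\tanh^2 d$. The non-analytic part in $\sinh d$ consists of the odd powers $|\sinh d|^{1+2r}$, because $\sqrt{1-w}=|\tanh d|$. I would try to write the summand as an analytic function of $\sinh^2 d$ plus a finite polynomial in $|\sinh d|$ of odd degree at most $2j+1$ (up to analytic factors). A cleaner route exploits the specific structure: for these parameters ${}_2F_1$ is elementary (a Legendre-type function), and one can apply \cite[Theorem 2.15]{brhabil}, which gives the singularity of $\Phi_{\mu,m}$ directly. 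Our case is $s_0=1-\kappa/2+j$, so after unwinding, the singular part is a finite sum of terms $\Gamma(-\tfrac12-r)\,|(X_1,Y)|^{1+2r}$ times polynomials in $Q(Y)$.

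\textbf{Matching constants.} Alternatively, and more robustly, the constants can be fixed by comparison with the Fourier expansion. Apply $R^j$ as in \eqref{eq:raising_of_exponential} and Borcherds' method \cite[Theorem 6.2]{borcherds}. Each lattice vector $Y$ with $Q(Y)<0$ contributes to the unfolded integral a term $\int_0^\infty e^{-\pi (X_1,Y)^2/v}\cdots$, and its non-analytic part is $\Gamma(-\tfrac12-r)$ times $|(X_1,Y)|^{1+2r}$, by the usual incomplete-Gamma asymptotics. The binomial weights ${j\choose r}(j+1-\kappa)_rQ(Y)^{j-r}$ come directly from \eqref{eq:raising_of_exponential}, and $(4\pi)^{j+1/2}$ from the Gaussian integral. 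Finally, substitute $(X_1(P),Y)=\frac{a(y^2-Q(x))+(b,x)+c}{\sqrt2 y}$ from \eqref{eq:(X_1(P),Y)}. Then $|(X_1,Y)|^{1+2r}$ becomes $|\cdots|^{1+2r}/(\sqrt2y)^{1+2r}$, which yields the factor $(2y)^{-(1+2r)}$ after collecting powers of $2$, and gives the stated $S_{(x_0,y_0)}$. The delicate step throughout is the exact bookkeeping of constants in the hypergeometric or Gamma asymptotics.
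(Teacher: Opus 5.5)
Your final paragraph is the paper's proof. The paper applies \cite[Theorem 6.2]{borcherds} directly to $F=R^j_{\kappa-2j}f$, with coefficients $c_\mu(n,r)=a_f(\mu,n)(-1)^j\binom{j}{r}(j+1-\kappa)_r(4\pi n)^{j-r}$ read off from \eqref{eq:raising_of_exponential}. This yields singular terms $(-1)^j(4\pi)^{j+\frac12}a_f(Y,Q(Y))\binom{j}{r}(j+1-\kappa)_rQ(Y)^{j-r}Q(Y_z)^{\frac12+r}\Gamma(-\frac12-r)$ over $0\neq Y\in L'\cap z_0^\perp$. The paper then finishes with three observations:
\begin{enumerate}
\item $z_0^\perp$ is negative definite, so $Q(Y)<0$ is automatic;
\item $Y\in z_0^\perp$ means exactly $(x_0,y_0)\in S_Y$;
\item $Q(Y_z)=\frac12(X_1(P),Y)^2$, which combined with \eqref{eq:(X_1(P),Y)} gives the stated form.
\end{enumerate}
One small bookkeeping correction: only $(4\pi)^{\frac12+r}$ comes from the integral, since $(4\pi)^{j-r}$ is already inside $c_\mu(n,r)$.

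Your first route, via Theorem \ref{thm:Siegel lift of PC series}, is a genuinely different argument. However, its parameters are miscomputed, and as written it points the wrong way. With $s_0=\frac{q+3}{4}+j$ one has $A=\frac q2+j$, $B=s_0-\frac{q-1}{4}=1+j$ (not $\frac12+j$) and $C=\frac{q+3}{2}+2j$, so $C-A-B=\frac12$. An integer value such as $1$ would be the logarithmic case and produce $\log(1-w)$ terms, which do not occur here.

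With the correct value the route works:
\begin{itemize}
\item The connection formula at $w=1$ writes each summand as an analytic function of $\tanh^2 d$ plus a constant times $|\tanh d|\,{}_2F_1(\frac32+j,\frac{q+1}{2}+j;\frac32;\tanh^2 d)$.
\item Euler's transformation turns that hypergeometric function into $\cosh^{q+1+4j}d\cdot{}_2F_1(-j,1-\frac q2-j;\frac32;\tanh^2 d)$, and the latter factor is a polynomial of degree $j$ in $\tanh^2 d$.
\item Multiplying by the prefactor $(4\pi m\cosh^2 d)^{-\frac q2-j}$, the singular part of each summand is $|\sinh d|$ times a polynomial of degree $j$ in $\sinh^2 d$. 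That is, it is a finite combination of $|\sinh d(P,S_Y)|^{1+2r}$ with $0\le r\le j$.
\end{itemize}
This explains concretely why only finitely many odd powers appear. To complete it, though, you would still have to push all constants through Theorem \ref{thm:Siegel lift of PC series}, the connection formula and Euler's transformation. You would also need to justify real-analyticity near $P_0$ of the remaining infinite sum. Normal convergence on a real neighbourhood is not enough; you need locally uniform convergence of the holomorphic extensions on a complex neighbourhood. Borcherds' theorem packages all of this and delivers the constants directly, which is why the paper uses it.

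Finally, \cite[Theorem 2.15]{brhabil} is what the paper uses for the Fourier expansion in Proposition \ref{prop:Fourier_expansion_lift}. It is not the source of the local singularity statement.
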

\begin{proof}
    We use \cite[Theorem 6.2]{borcherds} with $M=L$, $p=1$, $\alpha=\beta=0$ and $F=R^j_{\kappa-2j}f$.  In Borcherds' notation the Fourier coefficients of $F$ are (using \eqref{eq:raising_of_exponential})
$$c_\mu(n,r):=a_{f}(\mu,n) (-1)^{j}{j \choose r}(j+1-\kappa)_{r}(4\pi n)^{j-r} \quad \text{if }r\in \{0,\ldots,j\}$$
and $c_\mu(n,r)=0$ if $r>j$. We then apply \cite[Theorem 6.2]{borcherds} to  see that  around any given point $z_0\in \Gr(L)$ the function $\Psi^j(f,z)$ has a singularity of type
\begin{align*}
   %&= \sum_{\substack{Y\in L'\cap z_0^\perp\\ Y\neq0}}\sum_{t=0}^jc_Y(Q(Y),t)(4\pi Q(Y_z))^{\frac{1}{2}+t}\Gamma\left(-\frac{1}{2}-t\right)\\
  %&=\sum_{\substack{Y\in L'\cap z_0^\perp\\ Y\neq0}}\sum_{t=0}^ra_{f}(Q(Y),\mu) (-1)^{r-t}{r \choose t}(k-r -t)_{t}(4\pi Q(Y))^{r-t}(4\pi Q(Y_v))^{\frac{1}{2}+t}\Gamma\left(-\frac{1}{2}-t\right)\\
  &(-1)^{j}(4\pi)^{j+\frac{1}{2}}\sum_{\substack{Y\in L'\cap z_0^\perp\\ Y\neq0}}a_{f}(Y,Q(Y))  \sum_{r=0}^j
   {j \choose r}(j+1 -\kappa)_{r}
  Q(Y)^{j-r}Q(Y_{z})^{\frac{1}{2}+r}\Gamma\left(-\frac{1}{2}-r\right).
\end{align*}
If $z_0\in \Gr(L)$ corresponds to $(x_0,y_0)\in \H_W$, then $Y\in z_0^\perp$ is equivalent to $(x_0,y_0)\in S_Y$. Moreover, since $z_0$ is a positive definite line and $(V,Q)$ has signature $(1,q)$, the subspace $z_0^\perp \subset V$ is negative definite and therefore $Q(Y)\leq 0$. Together with $Y\neq 0$, this implies $Q(Y)<0$. Now, writing $Y=a\ell'+c\ell+b$, the condition $(x_0,y_0)\in S_Y$ becomes $a(y_0^2-Q(x_0))+(b,x_0)+c=0$ by \eqref{eq:def_S_Y}. Writing $P=(x,y)\in \H_W$ for the point corresponding to $z\in \Gr(L)$, we 
have that $Y_z=(Y,X_1(P))X_1(P)$ and $Q(Y_z)=\frac{1}{2}(Y,X_1(P))^2$. Combined with \eqref{eq:(X_1(P),Y)}, this implies the desired result. 
%\todo[inline]{I do not understand where the condition $a<0$ comes from, why is $a=0$ not possible?}
%where in the last equality we used that $a_f(-Y,Q(-Y))=a_f(Y,Q(Y))$. 
\end{proof}

We first prove two lemmas needed for the proof of Theorem \ref{thm:finite_sum_representation}.

\begin{lemma}\label{lem:finiteness_geodesics}
    Fix $(x_0,y_0)\in \H_W$ and $m_0>0$. Then, the sets 
    $$\mathcal{Y}_0:=\{Y=a\ell'+c\ell+b\in L':Q(Y)<0,|Q(Y)|\leq m_0, a(y_0^2-Q(x_0))+(b,x_0)+c=0\}$$
    and
    $$\mathcal{Y}:=\{Y=a\ell'+c\ell+b\in L':Q(Y)<0,|Q(Y)|\leq m_0, a(y_0^2-Q(x_0))+(b,x_0)+c\geq 0>a\}$$
    are both finite. Moreover, if $y_0>\sqrt{m_0}$ then $\mathcal{Y}=\emptyset$.
\end{lemma}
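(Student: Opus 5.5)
The plan is to translate both conditions into statements about the projection of $Y$ onto the line $z_0=\R X_1(P_0)$, where $P_0=(x_0,y_0)$, and then use the fact that $z_0^\perp$ is negative definite to bound $Y$ in a compact set. Since $L'$ is a lattice (discrete in $V$), finiteness then follows immediately.

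Write $X_0:=X_1(P_0)\in V_1$. By \eqref{eq:(X_1(P),Y)} we have
$$(X_0,Y)=\frac{a(y_0^2-Q(x_0))+(b,x_0)+c}{\sqrt2\,y_0}.$$
Decompose $Y=Y_{z_0}+Y_{z_0^\perp}$ with $Y_{z_0}=(X_0,Y)X_0$, so that $Q(Y)=\tfrac12 (X_0,Y)^2+Q(Y_{z_0^\perp})$. The quadratic form $-Q$ restricted to $z_0^\perp$ is positive definite, so the majorant $Q_{z_0}(Y):=\tfrac12(X_0,Y)^2-Q(Y_{z_0^\perp})=(X_0,Y)^2-Q(Y)$ is a positive definite quadratic form on $V$. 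It therefore suffices to bound $(X_0,Y)^2$ on each of the two sets, since $|Q(Y)|\le m_0$ is assumed.

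For $\mathcal{Y}_0$ we have $(X_0,Y)=0$, hence $Q_{z_0}(Y)=-Q(Y)\le m_0$. The set $\mathcal{Y}_0$ is thus contained in a bounded region of $V$ intersected with the discrete set $L'$, and is finite.

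For $\mathcal{Y}$ the main point, and the expected obstacle, is to show that $(X_0,Y)$ is bounded. The condition $a<0$ together with $(X_0,Y)\ge 0$ is what should give this. I would compute $Q(Y)=ac-Q(b)$ and use $\sqrt2\,y_0(X_0,Y)=a(y_0^2-Q(x_0))+(b,x_0)+c=:N\ge0$. Solving for $c=N-a(y_0^2-Q(x_0))-(b,x_0)$ and substituting gives
$$Q(Y)=aN-a^2y_0^2-Q(b-ax_0),$$
using $a^2Q(x_0)-a(b,x_0)-Q(b)=-Q(b-ax_0)$. Since $-Q$ is positive on $W$, this says
$$-Q(Y)=|a|N+a^2y_0^2+|Q(b-ax_0)|.$$
All three terms are nonnegative because $a<0$ and $N\ge0$, and their sum is at most $m_0$.

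From this we get the bounds $|a|\le\sqrt{m_0}/y_0$ and $|Q(b-ax_0)|\le m_0$, so $b$ is bounded. Since $a\in\Z$ (as $L'=\Z\ell\oplus\Z\ell'\oplus K'$) and $a\ne0$, we have $|a|\ge1$, which gives $N\le m_0$. Then $c$ is bounded via the formula for $c$ above, and finiteness of $\mathcal{Y}$ follows from discreteness of $L'$.

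Finally, if $y_0>\sqrt{m_0}$, then $a^2y_0^2\ge y_0^2>m_0$ for every integer $a\ne0$. This contradicts $-Q(Y)\le m_0$, so $\mathcal{Y}=\emptyset$.
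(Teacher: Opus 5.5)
Your proof is correct, apart from a sign slip in the computation for $\mathcal{Y}$. In the paper's normalization one has $Q(Y)=ac+Q(b)$ with $Q\le 0$ on $W$, not $ac-Q(b)$. The relevant identity is $a^2Q(x_0)-a(b,x_0)+Q(b)=Q(b-ax_0)$, so $Q(Y)=aN-a^2y_0^2+Q(b-ax_0)$. Your intermediate formulas seem to treat $Q(b)$ and $Q(b-ax_0)$ as the positive norm $-Q$. Since $Q(b-ax_0)\le 0$, the corrected formula gives exactly your displayed identity $-Q(Y)=|a|N+a^2y_0^2+|Q(b-ax_0)|$, and the rest of your argument goes through unchanged.

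Your route differs from the paper's mainly for $\mathcal{Y}_0$. The paper splits $\mathcal{Y}_0$ by the sign of $a$ and handles $a=0$ directly, using that $K'$ is negative definite. For $a\neq 0$ it uses $Y\mapsto -Y$ and $\mathcal{Y}_0^-\subseteq\mathcal{Y}$, so only one real estimate (the one for $\mathcal{Y}$) is needed.

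You instead observe that $\mathcal{Y}_0\subseteq z_0^\perp\cap\{-Q\le m_0\}$, which is compact because $z_0^\perp$ is negative definite. This is more conceptual and avoids the case split. It uses only discreteness of $L'$, not the splitting $L=(\Z\ell\oplus\Z\ell')\oplus K$ or integrality of $a$.

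For $\mathcal{Y}$ the two arguments are essentially the same computation. The paper's Cauchy--Schwarz/AM--GM inequality $(b,x_0)\le \frac{Q(b)}{a}+aQ(x_0)$ for $a<0$ is just $Q(b-ax_0)/a\ge 0$, i.e.\ your identity with the term $|Q(b-ax_0)|$ discarded.

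Keeping that term as an exact completed square is a bit more efficient. From the single equation you get the bound on $a$, the bound on $b$ (via $|Q(b-ax_0)|\le m_0$), the bound on $N$ and hence on $c$, and $\mathcal{Y}=\emptyset$ for $y_0>\sqrt{m_0}$. The paper needs a separate inequality to bound $b$ once $a$ is fixed.
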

\begin{proof}
    We can split $\mathcal{Y}_0$ as a disjoint union $\mathcal{Y}_0^+\cup \mathcal{Y}_0^0\cup \mathcal{Y}_0^-$ according to whether $a>0$, $a=0$ or $a<0$. In particular
    $$\mathcal{Y}_0^0=\{Y=c\ell+b\in L':-m_0\leq Q(b)<0,(b,x_0)+c=0\}.$$
    Since $b\in K'$ and $K'$ is negative definite, there is a finite subset $\mathcal{S}\subset K'$ such that $b\in \mathcal{S}$ for all $Y=c\ell+b\in \mathcal{Y}_0^0$. Moreover, for every such $Y$ the value of $c$ is determined by $b$. This shows that $\mathcal{Y}_0^0$ is finite. Since $Y\mapsto -Y$ defines a bijection between $\mathcal{Y}_0^+$ and $\mathcal{Y}_0^-$, and  since $\mathcal{Y}_0^-\subseteq \mathcal{Y}$, it remains to prove that $\mathcal{Y}$ is finite and that $\mathcal{Y}=\emptyset $ if $y_0>\sqrt{m_0}$. We start by recalling the Cauchy--Schwarz inequality 
    $$|(b,x)|\leq 2\sqrt{|Q(b)|}\sqrt{|Q(x)|} \text{ for all }b,x\in K\otimes \R,$$
    which, combined with the inequality between the arithmetic and geometric mean, gives
    $$|(b,x)|\leq \frac{Q(b)}{a}+ aQ(x) \text{ for all }b,x\in K\otimes \R \text{ and all }a\in \R_{<0}.$$
    For $Y=a\ell'+c\ell+b\in \mathcal{Y}$ this implies
    $$0\leq a(y_0^2-Q(x_0))+(b,x_0)+c\leq ay_0^2+\frac{Q(Y)}{a}\leq a y_0^2-\frac{m_0}{a}$$
    and then $|a|y_0^2\leq \frac{m_0}{|a|}$. This is possible only for finitely many values of $a\in \Z_{<0}$. Moreover, one has $y_0^2\leq \frac{m_0}{a^2}\leq m_0$, hence $\mathcal{Y}=\emptyset$ if $y_0>\sqrt{m_0}$. Now, fixing one of the finitely many possibilities for $a\in \Z_{<0}$, we have
    $$0<-a(y_0^2-Q(x_0))\leq (b,x_0)+c=(b,x_0)+\frac{Q(Y)-Q(b)}{a}\leq (b,x_0)-\frac{Q(b)}{a}-\frac{m_0}{a}$$
    and then $\frac{m_0}{a}\leq (b,x_0)-\frac{Q(b)}{a}$. Since $K'$ is negative definite, this gives only finitely many possibilities for $b\in K'$. Finally, fixing $a\in \Z_{<0}$ and $b\in K'$, the restriction $|Q(Y)|=|Q(b)+ac|\leq m_0$ gives finitely many possibilities for $c\in \Z$. This proves that $\mathcal{Y}$ is finite and completes the proof of the lemma.
\end{proof}

\begin{lemma}\label{lemma:type_sing_B_{2+2r}}
    Let $r\geq 0$ be an integer. Then, the functions
    $$\mathbb{B}_{2+2r}(t)+(r+1)|t|^{1+2r}\quad \text{and}\quad |t|^{1+2r}-2\max\{0,t^{1+2r}\}$$
    are real-analytic around $t=0$.
\end{lemma}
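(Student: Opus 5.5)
The second function is the easier one and I would handle it first. For $t>0$ we have $|t|^{1+2r}-2t^{1+2r}=-t^{1+2r}$. For $t<0$ we have $|t|^{1+2r}-0=(-t)^{1+2r}=-t^{1+2r}$, because $1+2r$ is odd. At $t=0$ both sides vanish. So the function equals the polynomial $-t^{1+2r}$ on all of $\R$, and in particular it is real-analytic around $0$.

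For the first function, I will use that $\mathbb{B}_{2+2r}$ is $1$-periodic and agrees with $B_{2+2r}$ on $[0,1)$. Hence on $[0,1)$ it equals $B_{2+2r}(t)$. On $[-1,0)$ it equals $B_{2+2r}(t+1)$, which by the standard identity $B_n(t+1)=B_n(t)+nt^{n-1}$ is $B_{2+2r}(t)+(2+2r)t^{1+2r}$.

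On $t\geq 0$ near $0$, the function $\mathbb{B}_{2+2r}(t)+(r+1)|t|^{1+2r}$ therefore equals
\[
B_{2+2r}(t)+(r+1)t^{1+2r}.
\]
On $t<0$ near $0$ it equals
\[
B_{2+2r}(t)+(2+2r)t^{1+2r}-(r+1)t^{1+2r}=B_{2+2r}(t)+(r+1)t^{1+2r},
\]
again using $|t|^{1+2r}=-t^{1+2r}$ for $t<0$. Both one-sided expressions coincide with the single polynomial $B_{2+2r}(t)+(r+1)t^{1+2r}$ on a neighbourhood of $0$, so the function is real-analytic there.

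The only ingredient not already in the paper is the identity $B_n(t+1)-B_n(t)=nt^{n-1}$. I would prove it from the generating series recalled before \eqref{eq:formula_B_2+2r}:
\[
\frac{ze^{(t+1)z}}{e^z-1}-\frac{ze^{tz}}{e^z-1}=ze^{tz}=\sum_{n\ge 1}\frac{n t^{n-1}}{n!}z^n.
\]
Comparing coefficients of $z^n/n!$ gives the identity. No step here is a real obstacle; the only care needed is the sign bookkeeping for negative $t$, which relies on the exponent $1+2r$ being odd.
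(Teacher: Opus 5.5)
Your proof is correct and follows the paper's argument essentially line by line. You use the same piecewise description of $\mathbb{B}_{2+2r}$ on $[-1,1)$ via $B_{2+2r}(t+1)=B_{2+2r}(t)+(2+2r)t^{1+2r}$, and the same observation that the second function is identically $-t^{1+2r}$; the only difference is that you derive the shift identity from the generating series, whereas the paper cites it from a standard reference.
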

\begin{proof}
    By definition, for $t\in [0,1)$ we have $\mathbb{B}_{2+2r}(t)=B_{2+2r}(t)$, while for $t\in [-1,0)$ we have $\mathbb{B}_{2+2r}(t)=B_{2+2r}(t+1)$. By \cite[p.~36, Equation (6)]{EMT81} we have $B_{2+2r}(t+1)=B_{2+2r}(t)+(2+2r)t^{1+2r}$. For $t\in [-1,1)$ it follows that
    $$\mathbb{B}_{2+2r}(t)+(r+1)|t|^{1+2r}=B_{2+2r}(t)+(r+1)t^{1+2r}.$$
    This implies the result for $\mathbb{B}_{2+2r}(t)+(r+1)|t|^{1+2r}$ since $B_{2+2r}(t)$ is a polynomial in $t$. The result for the second function follows from the equality $|t|^{1+2r}-2\max\{0,t^{1+2r}\}=-t^{1+2r}$ for all $t\in \R$.
    % and
    % $$\mathbb{B}_{2+2r}(t)+(2r+2)\max\{0,t\}^{1+2r}=B_{2+2r}(t)+(2+2r)t^{1+2r}.$$
\end{proof}

\begin{proof}[Proof of Theorem \ref{thm:finite_sum_representation}]
We start by defining, for $(x,y)\in \H_W$, the functions
\begin{align*}
   \alpha(x,y) &:= \, j! \sum_{Y \in K'\setminus\{0\}}  a_f(Y,Q(Y)) \sum_{r=0}^j \frac{(j+1-\kappa)_{r}}{(j-r)!(2+2r)! y^{1+2r}} |Q(Y)|^{j-r} \mathbb{B}_{2+2r}\left((Y,x)\right),\\
    \beta(x,y)
  &:=  4(-1)^j(4\pi)^{-\frac{1}{2}} \sum_{\substack{Y=a\ell'+c\ell+b \in  L'\\ Q(Y)<0,a<0}}a_{f}(Y,Q(Y)) \sum_{r=0}^j
   {j \choose r}(j+1-\kappa)_{r}\\
  & \qquad \qquad \qquad  \quad   \times \Gamma\left(-\frac{1}{2}-r\right)\frac{Q(Y)^{j-r}}{(2y)^{1+2r}}\max\{0,a(y^2-Q(x))+(b,x)+c\}^{1+2r},
\end{align*}
and
\begin{equation}\label{eq:F_function}
\begin{split}
    F(x,y) & :=\frac{y}{\left(\frac{q-1}{2}+j\right)(j+1)}\sum_{Y\in K'}|Q(Y)|^{j+1} a_{f}(Y,Q(Y))\\
    & \qquad + j!\, a_f(0,0) \frac{(j+1-\kappa)_j}{(2+2j)!y^{1+2j}}\mathbb{B}_{2+2j}(0)\\
    & \qquad \qquad + \alpha(x,y)+\beta(x,y).
    \end{split}
\end{equation}
For given $(x,y)\in \H_W$, the sums defining $\alpha(x,y)$, $\beta(x,y)$ and $F(x,y)$ are finite by Proposition \ref{prop:Fourier_expansion_lift} and Lemma \ref{lem:finiteness_geodesics}. The term corresponding to $Y=0$ in the second sum on the right hand side of \eqref{eq:finite_sum_representation} equals the second line of \eqref{eq:F_function}. Moreover, the last line of \eqref{eq:finite_sum_representation} is equal to $\beta(x,y)$. Hence, it suffices to prove that $\frac{1}{(4\pi)^{j+1}}\Psi^j(f,(x,y))=F(x,y)$ for all $(x,y)\in \H_W$.

We will first show that the difference $\frac{1}{(4\pi)^{j+1}}\Psi^j(f,(x,y))-F(x,y)$ is real-analytic on $\H_W$. By Proposition \ref{prop:singularities_of_lift}, around  a given point $(x_0,y_0)\in \H_W$ the function $\frac{1}{(4\pi)^{j+1}}\Psi^j(f,(x,y))$ has a singularity of type $T_{(x_0,y_0)}(x,y)+U_{(x_0,y_0)}(x,y)$ where
\begin{align*}
  T_{(x_0,y_0)}(x,y)& := (-1)^j(4\pi)^{-\frac{1}{2}} \sum\limits_{\substack{Y=c\ell  +b \in L'\\ Q(Y)<0\\ (b,x_0)+c=0 }}a_{f}(Y,Q(Y)) \sum_{r=0}^j
   {j \choose r}(j+1-\kappa)_{r}
  \\
  & \qquad \qquad \qquad  \qquad  \times \Gamma\left(-\frac{1}{2}-r\right)\frac{Q(Y)^{j-r}}{(2y)^{1+2r}}|(b,x)+c|^{1+2r} ,\\
  U_{(x_0,y_0)}(x,y)& := 2(-1)^j(4\pi)^{-\frac{1}{2}} \sum\limits_{\substack{Y=a\ell'+c\ell +b \in L'\\ Q(Y)<0,a<0\\ a(y_0^2-Q(x_0))+(b,x_0)+c=0 }}a_{f}(Y,Q(Y)) \sum_{r=0}^j
   {j \choose r}(j+1-\kappa)_{r}
  \\
  & \qquad \qquad \qquad  \qquad  \times \Gamma\left(-\frac{1}{2}-r\right)\frac{Q(Y)^{j-r}}{(2y)^{1+2r}}|a(y^2-Q(x))+(b,x)+c|^{1+2r}.
\end{align*}
These are finite sums by Lemma \ref{lem:finiteness_geodesics}. 
To prove that $\frac{1}{(4\pi)^{j+1}}\Psi^j(f,(x,y))-F(x,y)$ is real-analytic around $(x_0,y_0)$, it is enough to check that $T_{(x_0,y_0)}(x,y)-\alpha(x,y)$ and $U_{(x_0,y_0)}(x,y)-\beta(x,y)$ are both real-analytic around that point. Since $\mathbb{B}_{2+2r}(x)$ is real-analytic for $x\in \R\setminus \Z$, around $(x_0,y_0)$ the function $\alpha(x,y)$ has a singularity of type
$$ j! \sum_{\substack{Y \in K'\setminus\{0\}\\ (Y,x_0)\in \Z}}  a_f(Y,Q(Y)) \sum_{r=0}^j \frac{(j+1-\kappa)_{r}}{(j-r)!(2+2r)! y^{1+2r}} |Q(Y)|^{j-r} \mathbb{B}_{2+2r}\left((Y,x)\right).$$
Since $L'=\Z \ell +\Z \ell'+K'$ and every $Y=c\ell+b\in L'$ satisfies $(Y,x_0)=(b,x_0)$, the map $c\ell+b\mapsto b$ defines a bijection between the sets $\{Y=c\ell+b\in L':Q(Y)<0,(b,x_0)+c=0\}$ and $\{Y\in K'\setminus \{0\}:(Y,x_0)\in \Z\}$.
%\todo[inline]{Are these sets really equal? This should be an isomorphism? Also: $-\ell+b$ is in the LHS, but not the right? Do you want to require that $(Y,x_0)\in\Z$ instead of vanishing?} I hope it is correct now :)
Thus, around $(x_0,y_0)$ the difference $T_{(x_0,y_0)}(x,y)-\alpha(x,y)$ has a singularity of type
\begin{align*}
    & (-1)^j j!\sum\limits_{\substack{Y=c\ell +b \in L'\\ Q(Y)<0\\ (b,x_0)+c=0 }}a_{f}(Y,Q(Y)) \sum_{r=0}^j
   \frac{(j+1-\kappa)_{r}}{(j-r)!}\frac{Q(Y)^{j-r}}{y^{1+2r}}
  \\
  & \qquad \qquad \qquad  \qquad  \times \left( \frac{\Gamma\left(-\frac{1}{2}-r\right)}{\sqrt{\pi}r!\,2^{2+2r}}|(b,x)+c|^{1+2r}-\frac{(-1)^r}{(2+2r)!}\mathbb{B}_{2+2r}((Y,x))\right).
\end{align*}
%\todo[inline]{Move $ \frac{j!}{(j-r)!}$ inside the sum?} Yes 
Using  Remark \ref{rmk:Gamma(-1/2-r)} we get $\frac{\Gamma\left(-\frac{1}{2}-r\right)}{\sqrt{\pi}r!\,2^{2+2r}}=\frac{1}{2}\frac{(-1)^{r+1}}{(2r+1)!}$, thus the above sum is a finite linear combination of terms of the form
\begin{equation}\label{eq:|(b,x)+c|^{1+2r}+B_{2+2r}((Y,x))}
  \frac{1}{y^{1+2r}}\left((r+1)|(b,x)+c|^{1+2r}+\mathbb{B}_{2+2r}((Y,x))\right)  
\end{equation}
with $Y=c\ell +b\in L'$ satisfying $Q(Y)<0$ and $(b,x_0)+c=0$. Since $x\in W\subseteq \ell^\perp$, we have $\mathbb{B}_{2+2r}((Y,x))=\mathbb{B}_{2+2r}((b,x))=\mathbb{B}_{2+2r}((b,x)+c)$; thus by Lemma \ref{lemma:type_sing_B_{2+2r}} all terms of the form \eqref{eq:|(b,x)+c|^{1+2r}+B_{2+2r}((Y,x))} are real-analytic around $(x_0,y_0)$. This proves that $T_{(x_0,y_0)}(x,y)-\alpha(x,y)$ is real-analytic around $(x_0,y_0)$. Similarly, around $(x_0,y_0)$ the difference $U_{(x_0,y_0)}(x,y)-\beta(x,y)$ has a singularity whose type is a finite linear combination of terms of the form
$$\frac{1}{y^{1+2r}}\left(|a(y^2-Q(x))+(b,x)+c|^{1+2r}-2\max\{0,a(y^2-Q(x))+(b,x)+c\}^{1+2r}\right).$$
Hence,  our claim for $U_{(x_0,y_0)}(x,y)-\beta(x,y)$ follows from Lemma \ref{lemma:type_sing_B_{2+2r}}. This proves that $\frac{1}{(4\pi)^{j+1}}\Psi^j(f,(x,y))-F(x,y)$ is real-analytic around any given point  $(x_0,y_0)\in \H_W$.

Now, let $m_0\in \Q_{>0}$ such that $a_f(\mu,-m)=0$ for all $m>m_0$. It follows from the definition of $\beta(x,y)$ and Lemma \ref{lem:finiteness_geodesics} that $\beta(x,y)=0$ if $y>\sqrt{m_0}$. Combined with Proposition \ref{prop:Fourier_expansion_lift}, this implies $\frac{1}{(4\pi)^{j+1}}\Psi^j(f,(x,y))=F(x,y)$ for all $(x,y)\in \H_W$ with $y>\sqrt{2 m_0}$. We conclude that $\frac{1}{(4\pi)^{j+1}}\Psi^j(f,(x,y))-F(x,y)$ is a real-analytic function on $\H_W$ that vanishes identically for $y>\sqrt{2 m_0}$, hence it vanishes identically on $\H_W$ by the identity theorem for real-analytic functions. This completes the proof of Theorem \ref{thm:finite_sum_representation}.
\end{proof}

\subsection{Geometric interpretation of the lift}\label{sec:geometric-interpretation}

The following corollary is a direct application of Theorem \ref{thm:finite_sum_representation} and Corollary \ref{cor-distance}, together with the identity
$$(-1)^{r}(j+1-\kappa)_{r}\Gamma\left(-\frac{1}{2}-r\right)=-2\sqrt{\pi} 
\frac{\left(j+\frac{1+q}{2}\right)_r}{(3/2)_r} \quad \text{for }r\in \{0,1,\ldots,j\}.$$

\begin{corollary}\label{cor:geo_interpretation}
Let $j\in \N_0$ and $f\in M^!_{\kappa-2j,L}$. Then,  for every $P=(x,y)\in \H_W$ we have
    \begin{equation}\label{eq:geom_interpretation}
    \begin{split}
     \frac{1}{(4\pi)^{j+1}}\Psi^j(f,P)
   &=  \frac{y}{\left(\frac{q-1}{2}+j\right)(j+1)}\sum_{Y\in K'}|Q(Y)|^{j+1} a_{f}(Y,Q(Y))\\
   &\quad +  j! \sum_{Y \in K'}  a_f(Y,Q(Y))\sum_{r=0}^j\frac{\left(j+\frac{1+q}{2}\right)_r|Q(Y)|^{j-r}}{(j-r)!(2+2r)!y^{1+2r}}  \mathbb{B}_{2+2r}\left((Y,x)\right) \\
 & \quad  -4\sum_{\substack{Y\in L'\\ Q(Y)<0,a_Y<0\\ \sign_Y(P)=1}}a_{f}(Y,Q(Y)) |Q(Y)|^{j+\frac{1}{2}} \\
 & \qquad \qquad \qquad  \times \sum_{r=0}^j\binom{j}{r} 
\frac{\left(j+\frac{1+q}{2}\right)_r}{\left(3/2\right)_r}
   \sinh d(P,S_Y)^{1+2r}.
 %& \quad  +\frac{2}{\sqrt{\pi}} \sum_{\substack{Y\in L'\\ Q(Y)<0,a_Y<0\\ \sign_Y(P)=1}}a_{f}(Y,Q(Y)) |Q(Y)|^{j+\frac{1}{2}} \sum_{r=0}^j(-1)^{r}{j \choose r}(j+1-\kappa)_{r}
 % \\
 % & \qquad \qquad \qquad  \qquad \times \Gamma\left(-\frac{1}{2}-r\right)\sinh d(P,S_Y)^{1+2r}.
  \end{split}
\end{equation}
\end{corollary}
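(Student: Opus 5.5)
The plan is to start from the finite sum representation \eqref{eq:finite_sum_representation} of Theorem \ref{thm:finite_sum_representation}, which holds for every $(x,y)\in\H_W$, and rewrite it term by term. The first line of \eqref{eq:finite_sum_representation} already agrees with the first line of \eqref{eq:geom_interpretation}. For the second line, we have $\kappa=\frac{1-q}{2}$, so $j+1-\kappa=j+\frac{1+q}{2}$ and hence $(j+1-\kappa)_r=\left(j+\frac{1+q}{2}\right)_r$; this gives the second line of \eqref{eq:geom_interpretation} at once.

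The substantive step is the last sum. Take $Y=a\ell'+c\ell+b\in L'$ with $Q(Y)<0$, $a<0$ and $a(y^2-Q(x))+(b,x)+c>0$. By \eqref{eq:(X_1(P),Y)} the last condition is equivalent to $\sign_Y(P)=1$, so the summation ranges coincide. On this range the quantity $a(y^2-Q(x))+(b,x)+c$ is positive, so Corollary \ref{cor-distance} gives
\[
a(y^2-Q(x))+(b,x)+c=2y\sqrt{|Q(Y)|}\,\sinh d(P,S_Y).
\]
Substituting, the factor $\frac{1}{4^r y^{1+2r}}(\cdots)^{1+2r}$ becomes $2|Q(Y)|^{\frac12+r}\sinh d(P,S_Y)^{1+2r}$. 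We also write $Q(Y)^{j-r}=(-1)^{j-r}|Q(Y)|^{j-r}$. The powers of $|Q(Y)|$ then combine to $|Q(Y)|^{j+\frac12}$, and the signs combine as $(-1)^j(-1)^{j-r}=(-1)^r$. So the $r$-th summand becomes
\[
\frac{2}{\sqrt\pi}\binom{j}{r}(-1)^r(j+1-\kappa)_r\Gamma\!\left(-\tfrac12-r\right)|Q(Y)|^{j+\frac12}\sinh d(P,S_Y)^{1+2r}.
\]

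The remaining task, and the only real computation, is the stated identity
\[
(-1)^r(j+1-\kappa)_r\Gamma\!\left(-\tfrac12-r\right)=-2\sqrt\pi\,\frac{\left(j+\frac{1+q}{2}\right)_r}{(3/2)_r}.
\]
To check it, use Remark \ref{rmk:Gamma(-1/2-r)}, which gives $(-1)^r\Gamma\!\left(-\tfrac12-r\right)=-\frac{2^{r+1}\sqrt\pi}{(2r+1)!!}$. Combine this with $(3/2)_r=\frac{(2r+1)!!}{2^r}$, so that
\[
-\frac{2^{r+1}\sqrt\pi}{(2r+1)!!}=-\frac{2\sqrt\pi}{(3/2)_r}.
\]
Multiplying by $(j+1-\kappa)_r=\left(j+\frac{1+q}{2}\right)_r$ gives the identity. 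Inserting it, the prefactor of each summand is $\frac{2}{\sqrt\pi}\cdot(-2\sqrt\pi)=-4$, which yields exactly the third and fourth lines of \eqref{eq:geom_interpretation}.

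Finiteness of all sums is inherited from Theorem \ref{thm:finite_sum_representation}. I expect no genuine obstacle here; the main risk is sign and power bookkeeping, in particular the sign of $Q(Y)^{j-r}$ and getting the power $\frac12+r$ of $|Q(Y)|$ right.
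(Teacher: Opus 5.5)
Your proposal is correct and follows the paper's own route: the paper also derives this corollary directly from Theorem \ref{thm:finite_sum_representation}. It substitutes Corollary \ref{cor-distance} for the positive quantity $a(y^2-Q(x))+(b,x)+c$ and uses the identity $(-1)^r(j+1-\kappa)_r\Gamma(-\frac{1}{2}-r)=-2\sqrt{\pi}\,(j+\frac{1+q}{2})_r/(3/2)_r$, which you verify correctly via Remark \ref{rmk:Gamma(-1/2-r)} and $(3/2)_r=(2r+1)!!/2^r$; your sign and power bookkeeping, giving the prefactor $-4$, is also correct.
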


We observe that the second line in \eqref{eq:geom_interpretation} reproduces the singularities of $\Psi^j(f,P)$ along $Y^\perp$ for those $Y\in L'$ with $Q(Y)<0$ and $a_Y=0$, while the last two lines reproduce the singularities along $Y^\perp$ for those  $Y\in L'$ with $Q(Y)<0$ and $a_Y\neq 0$. Moreover, the conditions $Y\in L',Q(Y)<0,\, a_Y<0$ and $\sign_Y(P)=1$ mean that $S_Y$ is a hemisphere and that $P$ lies in the bounded component of $\H_W\setminus S_Y$. Finally, we observe that for every $Y\in K'\setminus \{0\}$ and every $P_0=(x_0,y_0)\in \H_W$, writing $(Y,x_0)=n+\varepsilon$ with $n\in \Z$ and $\varepsilon\in (-1,1)$ gives
$$\mathbb{B}_{2+2r}((Y,x_0))=B_{2+2r}(|(Y,x_0)-n|)=B_{2+2r}\left(2y_0\sqrt{|Q(Y)|}\sinh d(P_0,S_{\widetilde{Y}})\right)$$
where $\widetilde{Y}:=Y-n\ell \in L'$. This shows that locally around $P_0$ the function $\mathbb{B}_{2+2r}((Y,x))$ is a polynomial in $y\sinh d(P,S_{\widetilde{Y}})$ with $d(P,S_{\widetilde{Y}})$ the hyperbolic distance between $P$ and the vertical hyperplane $S_{\widetilde{Y}}$. If $(Y,x_0)\not \in \Z$, then there are two possible choices for $n$ corresponding to the two vertical hyperplanes $S_{\widetilde{Y}}$ closest to $P_0$ and parallel to $S_Y$. Using that $B_{2+2r}(1-x)=B_{2+2r}(x)$ one can show that the above expression does not depend on the choice of $S_{\widetilde{Y}}$. If $(Y,x_0) \in \Z$, then there is a unique such choice.

\section{Hypersurface integrals of Green's functions}\label{sec:Hypersurface integrals of Green's functions}

\subsection{The resolvent kernel}

In this section we use the classical model
\begin{equation*}
\mathbb H^q=
\left\{
(x_1,\ldots,x_{q-1},y)\in
\mathbb R^{q}:y>0
\right\}.
\end{equation*}
The hyperbolic Laplace operator in $\H^q$ is
$$\Delta:=-y^2\left(\frac{\partial^2}{\partial x_1^2}+\ldots+\frac{\partial^2}{\partial x_{q-1}^2}+\frac{\partial^2}{\partial y^2}\right)+(q-2)y\frac{\partial}{\partial y}.$$ 
The resolvent kernel for the hyperbolic Laplacian can be found in \cite[Equation (2.1)]{borthwick-marzuola}. After adjusting their normalization  to match ours, we find that 
for $s\in \C$ with $\Re(s)>\frac{3-q}{2}$ and real $t>1$, the resolvent kernel is given in terms of the function 
\begin{equation*}%\label{def:g_s}
    g_s(t):=(2\pi)^{-\frac{q}{2}}e^{-\pi i \frac{(q-2)}{2}} (t^2-1)^{-\frac{q-2}{4}}Q_{s-2+\frac{q}{2}}^{\frac{q-2}{2}}(t)
\end{equation*}
where $Q_{\nu}^\mu$ denotes the Legendre function of the second kind. Using the hypergeometric representation of $Q_{\nu}^\mu$ (see \cite[Equation 8.703]{GR07}) %(see DLMF, Eq.~14.3.7), 
the function $g_s(t)$ can also be written as
\begin{equation}\label{eq:g_s-hypergeometric}
    g_s(t)
=
A_q(s)\,
t^{-(s+q-2)}
{}_2F_1\!\left(
\frac{s+q-1}{2},
\frac{s+q-2}{2};
s+\frac{q-1}{2};
\frac1{t^2}
\right),
\end{equation}
where
\begin{equation}\label{eq:A_q(s)}
A_q(s)
:=
\frac{\Gamma(s+q-2)}
{2^{\,s+q-1}\pi^{\frac{q-1}{2}}
\Gamma\!\left(s+\frac{q-1}{2}\right)}.
\end{equation}
For $P,Q\in \H^q$ with $P\neq Q$ we have
$$\Delta_P g_s(\cosh \, d(P,Q))=(s+q-2)(1-s)g_s(\cosh \, d(P,Q)).$$

%\todo[inline]{If $q=2$ for example the Gamma factor in $A_q$ would give a pole if $\Re(s)<(1-q)/2$. Do we want this? Seba: I think $\Re(s)>\frac{3-q}{2}$ according to \cite{borthwick-marzuola} (after we match their notation to ours. I corrected this restriction above)} 

\begin{example}
When $q=2$ we have that $g_s(t)=\frac{1}{2\pi}Q_{s-1}(t)$. %This is a constant multiple of the resolvent kernels used in \cite{GZ85,GZ86}.
%\todo[inline]{Claudia: Is this the same normalization as in the intro? Or do we use GZ there?} Now it should be consisten with the intro
\end{example}

\begin{example}
For \(q=3\)
we have, by \eqref{eq:g_s-hypergeometric},
\[
g_s(t)
=
\frac1{2^{s+2}\pi}
t^{-s-1}
{}_2F_1\!\left(
\frac{s+2}{2},
\frac{s+1}{2};
s+1;
\frac1{t^2}
\right).
\]
By \cite[p.~39, Line 2]{MOS} we have %the quadratic transformation
\[
t^{-s-1}
{}_2F_1\!\left(
\frac{s+2}{2},
\frac{s+1}{2};
s+1;
\frac1{t^2}
\right)
=
2^s
(t^2-1)^{-1/2}
\left(t+\sqrt{t^2-1}\right)^{-s},
\]
hence
\[
g_s(t)
=
\frac1{4\pi}
(t^2-1)^{-1/2}
\left(t+\sqrt{t^2-1}\right)^{-s},
\]
which is a constant multiple of the resolvent kernel on
\(\mathbb H^3\) used in \cite{EGM98} and \cite{HIPS25}.
\end{example}

For later use we state here the following lemma.

\begin{lemma}\label{lem:asymptotics_resolvent_kernel}
\begin{enumerate}
    \item[$(i)$] If $q=2$, then
    $$g_s(t)\sim -\frac{1}{4\pi}\log(t-1) \quad \text{ as }t\to 1^+,$$
    while if $q>2$ then
    $$g_s(t)\sim (t-1)^{1-\frac{q}{2}}\frac{\Gamma\left(\frac{q}{2}-1\right)}{2^{1+\frac{q}{2}}\pi^{\frac{q}{2}}} \quad \text{ as }t\to 1^+.$$
    \item[$(ii)$] $g_s(t)=O_s(t^{-(\Re(s)+q-2)})$ as $t\to \infty$.
\end{enumerate}
\end{lemma}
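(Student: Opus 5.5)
Both parts will be read off from the hypergeometric representation \eqref{eq:g_s-hypergeometric}, namely $g_s(t)=A_q(s)\,t^{-(s+q-2)}\,{}_2F_1(\alpha,\beta;\gamma;t^{-2})$ with
\[
\alpha=\frac{s+q-1}{2},\qquad \beta=\frac{s+q-2}{2},\qquad \gamma=s+\frac{q-1}{2}.
\]
Part $(ii)$ is the easy one. As $t\to\infty$ the argument $t^{-2}$ tends to $0$, so ${}_2F_1(\alpha,\beta;\gamma;t^{-2})\to 1$. Hence $g_s(t)=A_q(s)t^{-(s+q-2)}(1+O(t^{-2}))$, and taking absolute values gives $O_s(t^{-(\Re(s)+q-2)})$. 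This holds for fixed $s$ with $\Re(s)>\frac{3-q}{2}$, where $A_q(s)$ is finite.

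For part $(i)$, let $t\to1^+$, so that $u:=t^{-2}\to 1^-$ and $1-u=(t^2-1)/t^2\sim 2(t-1)$. The behaviour of ${}_2F_1$ at $u=1$ is governed by
\[
\gamma-\alpha-\beta=s+\frac{q-1}{2}-\left(s+q-\frac{3}{2}\right)=1-\frac{q}{2}.
\]
\emph{Case $q>2$.} Here $\gamma-\alpha-\beta<0$, and the standard asymptotic (obtained from Gauss' connection formula) is
\[
{}_2F_1(\alpha,\beta;\gamma;u)\sim \frac{\Gamma(\gamma)\Gamma(\alpha+\beta-\gamma)}{\Gamma(\alpha)\Gamma(\beta)}\,(1-u)^{\gamma-\alpha-\beta},\qquad \alpha+\beta-\gamma=\frac{q}{2}-1.
\]
Inserting $A_q(s)$, the factor $\Gamma(\gamma)$ cancels and we get
\[
g_s(t)\sim \frac{\Gamma(s+q-2)\,\Gamma\!\left(\frac q2-1\right)}{2^{s+q-1}\pi^{\frac{q-1}{2}}\,\Gamma\!\left(\frac{s+q-1}{2}\right)\Gamma\!\left(\frac{s+q-2}{2}\right)}\,\bigl(2(t-1)\bigr)^{1-\frac q2},
\]
using $t^{-(s+q-2)}\to1$. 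The Legendre duplication formula gives
\[
\Gamma\!\left(\frac{s+q-2}{2}\right)\Gamma\!\left(\frac{s+q-1}{2}\right)=2^{3-s-q}\sqrt{\pi}\,\Gamma(s+q-2).
\]
After this cancellation the constant becomes $\Gamma\left(\frac q2-1\right)2^{-2}\pi^{-q/2}\cdot 2^{1-q/2}=\Gamma\left(\frac q2-1\right)/(2^{1+q/2}\pi^{q/2})$, which is the claimed value. The constant is independent of $s$, as it should be for the local singularity of a resolvent kernel.

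\emph{Case $q=2$.} Here $\gamma=\alpha+\beta$, and the logarithmic asymptotic is
\[
{}_2F_1(\alpha,\beta;\alpha+\beta;u)\sim -\frac{\Gamma(\alpha+\beta)}{\Gamma(\alpha)\Gamma(\beta)}\log(1-u).
\]
Combining this with $A_2(s)=\Gamma(s)/(2^{s+1}\sqrt{\pi}\,\Gamma(s+\frac12))$ and $\Gamma(\alpha+\beta)=\Gamma(s+\frac12)$, then applying duplication to $\Gamma\left(\frac{s+1}{2}\right)\Gamma\left(\frac s2\right)=2^{1-s}\sqrt{\pi}\,\Gamma(s)$, gives the coefficient $-\frac{1}{2^{s+1}\sqrt\pi}\cdot\frac{2^{s-1}}{\sqrt\pi}=-\frac{1}{4\pi}$. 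Since $\log(1-u)=\log(t-1)+O(1)$, this yields $g_s(t)\sim-\frac{1}{4\pi}\log(t-1)$.

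As a cross-check for $q=2$, one can instead use $g_s=\frac1{2\pi}Q_{s-1}$ together with the classical relation $Q_{\nu}(t)\sim-\frac12\log(t-1)$. The only real work in the whole argument is the constant bookkeeping through the duplication formula; the main point to watch is that $\Gamma(\alpha),\Gamma(\beta)$ have no poles in the stated range of $s$.
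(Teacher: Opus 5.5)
Your proposal is correct and takes essentially the same route as the paper. Both parts are read off from \eqref{eq:g_s-hypergeometric}: part $(i)$ uses the standard behaviour of ${}_2F_1(a,b;c;x)$ as $x\to 1^-$ in the cases $c=a+b$ and $\Re(a+b-c)>0$ (the paper cites \cite[Theorem 2.1.3]{AAR99}) together with Legendre's duplication formula, and part $(ii)$ follows directly from the representation. Your constant bookkeeping is correct, including $1-t^{-2}\sim 2(t-1)$ and the check that $\Gamma(\alpha),\Gamma(\beta)$ are finite for $\Re(s)>\frac{3-q}{2}$; it fills in details the paper leaves implicit.
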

\begin{proof}
    Item $(i)$ follows from the fact that for $a,b,c\in \C$ with $\Re(c)>0$ the hypergeometric series satisfies
    $$_2F_1(a,b;c;x)\sim -\log(1-x)\frac{\Gamma(a+b)}{\Gamma(a)\Gamma(b)} \quad \text{ as }x\to 1^- \text{ if }c=a+b$$
    and 
    $$_2F_1(a,b;c;x)\sim (1-x)^{-(a+b-c)}\frac{\Gamma(c)\Gamma(a+b-c)}{\Gamma(a)\Gamma(b)} \quad \text{ as }x\to 1^- \text{ if }\Re(a+b-c)>0$$
    (see \cite[Theorem 2.1.3]{AAR99}), together with \eqref{eq:g_s-hypergeometric} and Legendre's duplication formula $\Gamma\left(s\right)\Gamma\left(s+\frac{1}{2}\right)=2^{1-2s}\sqrt{\pi}\Gamma(2s)$. Item $(ii)$ follows directly from \eqref{eq:g_s-hypergeometric}.
\end{proof}

\subsection{Hypersurface integrals of the resolvent kernel}

The $q$-dimensional upper half-space $\H^q$ is equipped with the hyperbolic metric
\[
\d s^2:=\frac{\d x_1^2+\cdots+\d x_{q-1}^2+\d y^2}{y^2}
\]
and with the hyperbolic measure
\[
\d\mu := \frac{\d x_1\cdots \d x_{q-1}\d y}{y^{q}}.
\]
If $S\subset \H^q$ is a geodesic hypersurface, we denote by $d\omega$ the Euclidean measure on $S$. The induced hyperbolic hypersurface measure on $S$ is 
$$\d\sigma:=\frac{\d\omega}{y^{q-1}}.$$

Using the notation from Section \ref{sec:quad_spaces_(1,q)_hyp_geometry}, we can write $\H^q=\H_W$ where $W=\R^{q-1}$ and $V=\R \ell +\R \ell'+W$ is a quadratic space with quadratic form 
$$Q(a\ell'+c\ell+(x_1,\ldots,x_{q-1}))=ac-(x_1^2+\ldots+x_{q-1}^2).$$
For every vector $Y\in V$  with $Q(Y)<0$ we have the geodesic hypersurface $S_Y\subset \H^q$ defined in Section \ref{sec:points_and_geodesic_hypersurfaces}.

\begin{example}
Take $q=2$ and $Y=\ell'-\ell$. Then $S_Y=\{\tau\in \H:|\tau|^2=1\}$ is the unit arc in $\H$. The hyperbolic arc element is $\d\sigma=\frac{\sqrt{\d x^2+\d y^2}}{y}$ and 
for every $f:S_Y\to \C$ we have
$$\int_{S_Y}f(\tau)\d\sigma(\tau)=\int_0^{\pi}f\left(e^{i\theta}\right)\frac{\d\theta}{\sin\theta}.$$
If we instead take $Y=1\in \mathbb{R}=W$, then $S_{Y}=\{it:t>0\}$ and 
$$\int_{S_{Y}}f(\tau)\d\sigma(\tau)=\int_0^\infty f(it)\frac{\d t}{t}.$$
\end{example}

For $x=(x_1,\ldots,x_{q-1})\in \R^{q-1}$ let $|x|:=\sqrt{x_1^2+\ldots+x_{q-1}^2}$. For every $\rho>0$ the geodesic hemisphere
\[
S_\rho:=
\left\{
(x,y)\in\mathbb H^q:
|x|^2+y^2=\rho^2
\right\}
\]
is parametrized by
\[
  (x,y)=
%\begin{cases}
(\rho\sin\theta\,\omega,\rho\cos\theta) \quad \text{where }
%&\text{where }
\theta\in \left[0,\frac{\pi}{2}\right[, \omega\in S^{q-2}.
%\text{ if }q>2,\\
%(\rho \sin\theta,\rho\cos\theta) & \text{where }
%\theta\in \left]-\frac{\pi}{2},\frac{\pi}{2}\right[  \text{ if }q=2.
%\end{cases}
\]
Here $S^{q-2}\subset \R^{q-1}$ is the Euclidean unit sphere. When $q>2$, writing
\(
\omega=(\omega_1,\ldots,\omega_{q-1})\in S^{q-2}
\)
in spherical coordinates gives
\begin{equation*}%\label{eq:w_spherical}
\begin{split}
\omega_1&=\cos\phi_1,\\
\omega_2&=\sin\phi_1\cos\phi_2,\\
\omega_3&=\sin\phi_1\sin\phi_2\cos\phi_3,\\
&\ \vdots\\
\omega_{q-2}
&=
\sin\phi_1\cdots
\sin\phi_{q-3}
\cos\phi_{q-2},\\
\omega_{q-1}
&=
\sin\phi_1\cdots
\sin\phi_{q-3}
\sin\phi_{q-2},
\end{split}
\end{equation*}
where
\(
0\le\phi_j\le\pi\)
for
\(1\le j\le q-3,
\)
and
\(
0\le\phi_{q-2}<2\pi.
\)
% Hence the hemisphere $S_\rho$ is parametrized by
% \begin{equation}\label{eq:parametrization}
% \sigma(\theta,\phi_1,\ldots,\phi_{q-2})=\begin{cases}
%     (\rho \sin \theta\, w,\rho \cos \theta) & \text{if }q>2,\\
%      (\rho \sin \theta,\rho \cos \theta) & \text{if }q=2,
% \end{cases}
% \end{equation}
% where $\theta,\omega,\phi_1,\ldots,\phi_{q-2}$ are as above. 
%\[
%\begin{aligned}
% x_1&=\rho\sin\theta\cos\phi_1,\\
% x_2&=\rho\sin\theta\sin\phi_1\cos\phi_2,\\
% x_3&=\rho\sin\theta\sin\phi_1\sin\phi_2\cos\phi_3,\\
% &\ \vdots\\
% x_{q-2}
% &=
% \rho\sin\theta
% \sin\phi_1\cdots
% \sin\phi_{q-3}
% \cos\phi_{q-2},\\
% x_{q-1}
% &=
% \rho\sin\theta
% \sin\phi_1\cdots
% \sin\phi_{q-3}
% \sin\phi_{q-2},\\
% y&=\rho\cos\theta.
% \end{aligned}
% \]
In this case the Euclidean measure on $S^{q-2}$ is
\[
\d\omega
:=
\sin^{q-3}\phi_1\,
\sin^{q-4}\phi_2
\cdots
\sin\phi_{q-3}\,
\d\phi_1\cdots \d\phi_{q-2},
\]
and the induced hyperbolic surface measure on
$S_\rho$ is
\begin{equation}\label{eq:measure_dsigma}
\d\sigma
:=
\frac{\sin^{q-2}\theta}{\cos^{q-1}\theta}
\d\theta\d\omega.
\end{equation}
When $q=2$ the same formula holds if we put $\d\omega:=\delta_1+\delta_{-1}$ on $S^{q-2}=\{-1,1\}$ where $\delta_x$ denotes the Dirac measure concentrated at $x$.

To compute the traces of automorphic Green's functions over geodesic hypersurfaces in $\H^q$, we need the following proposition.

\begin{proposition}\label{prop:integral_gs_over_S1}
Let 
$S$ be a geodesic hypersurface %of Euclidean radius \(1\) 
in $\H^q$, let
 \(
P\in\mathbb H^q
\), and put 
\(
\Lambda
:=\cosh\,d(P,S).
\)
Then for every $P\in \H^q$ and every $s\in \C$ with $\Re(s)>0$ we have
\[
\int_{S} g_s(\cosh\, d(P,Q))\d\sigma(Q)
={}
\frac{
\Gamma\left(\frac{s}{2}\right)
\Gamma\left(\frac{s+q-2}{2}\right)
}{
4\sqrt{\pi}\,
\Gamma\left(s+\frac{q-1}{2}\right)
}
\Lambda^{-(s+q-2)}
{}_2F_1\left(
\frac{s}{2},
\frac{s+q-2}{2};
s+\frac{q-1}{2};
\frac{1}{\Lambda^2}
\right)
\]
where the integral on the left hand side and the hypergeometric series on the right hand side both converge absolutely.
\end{proposition}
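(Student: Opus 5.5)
By isometry invariance, we reduce to the case $S=S_1=\{|x|^2+y^2=1\}$ and $P=(0,y_0)$ on the vertical axis. With $y_0=e^{\delta}$ we have $\Lambda=\cosh\delta$, so $d(P,S)=|\delta|$. The group $\SO(V)^+$ acts transitively on pairs (point, geodesic hypersurface) at a fixed distance: move $S$ to $S_1$, then use the stabilizer of $S_1$ (which contains the rotations about the $y$-axis and the isometries of $S_1$) to move the foot of the perpendicular from $P$ to $(0,1)$. Both sides of the formula depend only on $d(P,S)$, so this reduction is harmless.

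For $Q=(\sin\theta\,\omega,\cos\theta)\in S_1$, formula \eqref{eq:cosh_d_H_W} gives
\[
\cosh d(P,Q)=\frac{1+y_0^2}{2y_0\cos\theta}=\frac{\Lambda}{\cos\theta},
\]
which is independent of $\omega$. Using the measure \eqref{eq:measure_dsigma}, the integral becomes
\[
\mathrm{vol}(S^{q-2})\int_0^{\pi/2} g_s\!\left(\frac{\Lambda}{\cos\theta}\right)\frac{\sin^{q-2}\theta}{\cos^{q-1}\theta}\,\d\theta ,
\]
with $\mathrm{vol}(S^{q-2})=2\pi^{\frac{q-1}{2}}/\Gamma\left(\frac{q-1}{2}\right)$; for $q=2$ this factor is $2$, consistent with $\d\omega=\delta_1+\delta_{-1}$. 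Substituting $u=\cos^2\theta$ gives $\d\theta\,\sin^{q-2}\theta/\cos^{q-1}\theta=\tfrac12 u^{-q/2}(1-u)^{\frac{q-3}{2}}\,\d u$ on $[0,1]$. Now insert the hypergeometric expansion \eqref{eq:g_s-hypergeometric}: with $t=\Lambda u^{-1/2}$ we get $t^{-(s+q-2)}=\Lambda^{-(s+q-2)}u^{\frac{s+q-2}{2}}$ and $t^{-2}=u/\Lambda^2$. Expanding ${}_2F_1$ termwise, the $n$-th term of the integral is
\[
A_q(s)\Lambda^{-(s+q-2)}\frac{(\frac{s+q-1}{2})_n(\frac{s+q-2}{2})_n}{(s+\frac{q-1}{2})_n\,n!}\Lambda^{-2n}\cdot\tfrac12\,\mathrm{B}\!\left(\tfrac{s}{2}+n,\tfrac{q-1}{2}\right),
\]
since $u^{\frac{s+q-2}{2}+n-\frac q2}=u^{\frac s2+n-1}$. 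The Beta function evaluates to $\Gamma(\frac s2+n)\Gamma(\frac{q-1}{2})/\Gamma(\frac{s+q-1}{2}+n)$. The factor $(\frac{s+q-1}{2})_n/\Gamma(\frac{s+q-1}{2}+n)$ collapses to $1/\Gamma(\frac{s+q-1}{2})$, and $\Gamma(\frac s2+n)=\Gamma(\frac s2)(\frac s2)_n$. Summing over $n$ therefore yields the series ${}_2F_1\left(\frac s2,\frac{s+q-2}{2};s+\frac{q-1}{2};\Lambda^{-2}\right)$, with prefactor
\[
\pi^{\frac{q-1}{2}}A_q(s)\frac{\Gamma(\frac s2)}{\Gamma(\frac{s+q-1}{2})}=\frac{\Gamma(s+q-2)\Gamma(\frac s2)}{2^{s+q-1}\Gamma(s+\frac{q-1}{2})\Gamma(\frac{s+q-1}{2})}.
\]
Legendre duplication gives $\Gamma(s+q-2)=2^{s+q-3}\pi^{-1/2}\Gamma(\frac{s+q-2}{2})\Gamma(\frac{s+q-1}{2})$. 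Substituting, the prefactor becomes $\Gamma(\frac s2)\Gamma(\frac{s+q-2}{2})/(4\sqrt\pi\,\Gamma(s+\frac{q-1}{2}))$, as claimed.

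The main obstacle is justifying the interchange of sum and integral and the absolute convergence, particularly when $P\in S$ ($\Lambda=1$). All terms are positive for real $s>0$, so for real $s$ Tonelli applies directly. For complex $s$ we bound the $n$-th term by the corresponding term with $\Re s$, since $|(a)_n|\le(\Re a)_n$ for $\Re a>0$ and the Beta integrals depend only on $\Re s$.

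Absolute convergence of the resulting series at $\Lambda=1$ holds because $c-a-b=\frac{s}{2}+\frac{q-1}{2}-\frac{s}{2}-\frac{q-2}{2}+\frac{s}{2}$, i.e. $\Re(c-a-b)=\frac12+\frac{\Re s}{2}>0$. Equivalently, on the integral side: the singularity of $g_s$ near $t\to1$ from Lemma \ref{lem:asymptotics_resolvent_kernel}$(i)$ is $(t-1)^{1-q/2}$ (or logarithmic for $q=2$), which is integrable over the $(q-1)$-dimensional hypersurface. Near $\theta\to\pi/2$, Lemma \ref{lem:asymptotics_resolvent_kernel}$(ii)$ gives decay $\cos^{\Re s+q-2}\theta$ against $\cos^{1-q}\theta$, which converges precisely when $\Re s>0$. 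This is why the hypothesis $\Re(s)>0$ appears.
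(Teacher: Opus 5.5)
Your proposal is correct and follows the paper's proof essentially step for step: isometry reduction to $P=(0,y)$ and $S=S_1$, the formula $\cosh d(P,Q)=\Lambda/\cos\theta$, termwise integration of the hypergeometric expansion of $g_s$ against a Beta integral (your substitution $u=\cos^2\theta$ is just the paper's appeal to Gradshteyn--Ryzhik 3.621.5), cancellation of $(\frac{s+q-1}{2})_n$, and Legendre duplication. There are two small slips in the convergence part, neither of which affects the conclusion: the inequality $|(a)_n|\le(\Re a)_n$ goes the wrong way (since $|a+k|\ge \Re a+k$) and should be replaced, as in the paper, by $|(a)_n|=|\Gamma(a+n)|/|\Gamma(a)|\le \Gamma(\Re a+n)/|\Gamma(a)|$, which only costs an $n$-independent constant; and $c-a-b$ for the resulting series is exactly $\frac12$, not $\frac12+\frac{\Re s}{2}$, which still suffices.
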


For the proof of Proposition \ref{prop:integral_gs_over_S1} we start with the following lemma. Recall that $S^{q-2}\subset \R^{q-1}$ denotes the Euclidean unit sphere. In the case $q=2$ we have $S^0=\{-1,1\}$.

\begin{lemma}\label{lemma:distance-sphere}
For every $y>0$, $\omega \in S^{q-2}$ and $\theta\in \left]-\frac{\pi}{2},\frac{\pi
}{2}\right[$, the hyperbolic distance between $P=(0,y)\in \H^q$ and $Q=(\sin \theta \,\omega,\cos \theta)\in S_1$ satisfies
$$\cosh\, d(P,Q)=\frac{y^2+1}{2y\cos(\theta)}.$$
In particular,
$$\cosh\, d(P,S_1)=\frac{y^2+1}{2y}.$$
\end{lemma}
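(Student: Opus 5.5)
The plan is a direct computation with the distance formula \eqref{eq:cosh_d_H_W} in the classical model, followed by a minimisation over $\theta$. We have $P=(0,y)$ and $Q=(\sin\theta\,\omega,\cos\theta)$, with $|\omega|=1$ and $\cos\theta>0$ for $\theta\in\left]-\frac{\pi}{2},\frac{\pi}{2}\right[$. The Euclidean horizontal displacement is therefore $|x-x'|^2=\sin^2\theta\,|\omega|^2=\sin^2\theta$. Substituting into
\[
\cosh d(P,Q)=\frac{|x-x'|^2+y^2+(y')^2}{2yy'}
\]
with $y'=\cos\theta$ gives a numerator equal to $\sin^2\theta+y^2+\cos^2\theta=y^2+1$. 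The denominator is $2y\cos\theta$, which yields the first formula.

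For the second claim, note that $S_1$ is exactly the set of such points $Q$. For $q>2$, $\theta\in[0,\frac{\pi}{2}[$ together with $\omega\in S^{q-2}$ covers $S_1$, and allowing negative $\theta$ only repeats points since $(\sin\theta\,\omega)=(\sin(-\theta)(-\omega))$. For $q=2$, $\omega=\pm1$ and $\theta$ ranging in $\left]-\frac{\pi}{2},\frac{\pi}{2}\right[$ parametrise the unit semicircle. Hence
\[
\cosh d(P,S_1)=\inf_{Q\in S_1}\cosh d(P,Q)=\inf_{\theta}\frac{y^2+1}{2y\cos\theta}.
\]
Since $y^2+1>0$ and $0<\cos\theta\le1$, the infimum is attained at $\theta=0$, i.e.\ at $Q=(0,1)$, and equals $\frac{y^2+1}{2y}$. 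Here we use that $\cosh$ is increasing on $[0,\infty[$, so minimising the distance is the same as minimising its $\cosh$.

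As a consistency check, one can compare with Corollary \ref{cor-distance} for $Y=\ell'-\ell$. It gives $\sinh d=\frac{|y^2-1|}{2y}$, so $\cosh d=\frac{y^2+1}{2y}$, in agreement. There is no real obstacle in this argument. The only point needing a little care is confirming that the parametrisation covers all of $S_1$, including the degenerate case $q=2$ with $S^0=\{\pm1\}$.
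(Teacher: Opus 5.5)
Your proof is correct. The minimisation step, taking $\theta=0$ in $\frac{y^2+1}{2y\cos\theta}$, is exactly what the paper does. The difference lies only in how the first identity is obtained.

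\begin{itemize}
\item The paper writes $P=P_X$ and $Q=P_Z$ with $X=\ell'+y^2\ell$ and $Z=\frac{1}{\sqrt2\cos\theta}(\ell'+\ell+\sin\theta\,\omega)$. It then evaluates $\cosh d(P_X,P_Z)=\frac{(X,Z)}{2\sqrt{Q(X)Q(Z)}}$ in the Lorentzian model.
\item You substitute the coordinates directly into the half-space formula \eqref{eq:cosh_d_H_W}.
\end{itemize}

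Your route is a little more self-contained, since \eqref{eq:cosh_d_H_W} is stated in the paper. The inner-product formula for $\cosh d$ between two points is used there without being recorded. The paper's route instead matches the vector-based style of Lemma \ref{l:sinh-distance-point-to-hypersurface}.

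Your remarks that the parametrisation covers $S_1$ (including $q=2$) are correct. So is the consistency check against Corollary \ref{cor-distance} with $Y=\ell'-\ell$.
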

\begin{proof}
We can write $P=(0,y)=P_X$ with $X=\ell'+y^2\ell$, which has $Q(X)=y^2$. Similarly, the point $Q=(\sin \theta \,\omega,\cos \theta)$ can be written as $Q=P_Z$ with $Z=\frac{1}{\sqrt 2 \cos \theta}(\ell'+\ell+\sin \theta\, \omega)$, which has $Q(Z)=\frac{1}{2}$. Then
\begin{eqnarray*}
    \cosh\, d(P,Q)=\frac{(X,Z)}{2\sqrt{Q(X)|Q(Z)|}}=\frac{1+y^2}{2y\cos(\theta)}.
\end{eqnarray*}
This proves the first statement. For the second statement, note that
\begin{eqnarray*}
    \cosh\, d(P,S_1)=\min_{\theta\in [0,\pi/2[}\frac{1+y^2}{2y\cos(\theta)}=\frac{1+y^2}{2y}.
\end{eqnarray*}
This completes the proof of the lemma.
\end{proof}

\begin{proof}[Proof of Proposition \ref{prop:integral_gs_over_S1}]
We start by choosing an isometry $\gamma_0:\H^q\to \H^q$ such that $\gamma_0P=(0,y)$ and $\gamma_0S=S_1$. Since
\[\int_{S} g_s(\cosh\, d(P,Q))\d\sigma(Q)=\int_{S_1} g_s(\cosh \, d((0,y),Q))\d\sigma(Q)\]
and $d(P,S)=d((0,y),S_1)$, we can assume $P=(0,y)$ and $S=S_1$. We parametrize the hemisphere \(S_1\) by
\[
Q(\theta,\omega)
=
\left(
\sin\theta\,\omega,
\cos\theta
\right), \quad \text{ where }
0\leq\theta<\frac{\pi}{2},\,
\omega\in S^{q-2}.
\]
The induced hyperbolic measure $\d\sigma$ on \(S_1\) is given by \eqref{eq:measure_dsigma}. By Lemma~\ref{lemma:distance-sphere}, the hyperbolic distance between \(P\) and \(Q(\theta,\omega)\) satisfies
\[
\cosh \, d\bigl(P,Q(\theta,\omega)\bigr)
=
\frac{\Lambda}{\cos\theta},
\]
with $\Lambda=\cosh \, d(P,S_1)$. This implies
\[\int_{S_1} g_s(\cosh \, d((0,y),Q))\d\sigma(Q)
=\mathrm{vol}(S^{q-2})\int_0^{\pi/2} g_s\left(\frac{\Lambda}{\cos\theta}\right)\frac{\sin^{q-2}\theta}{\cos^{q-1}\theta}\d\theta.
\]
By Lemma \ref{lem:asymptotics_resolvent_kernel} this integral is absolutely convergent for every $\Lambda\geq 1$ and $s\in \C$ with $\Re(s)>0$. Now, using \eqref{eq:g_s-hypergeometric} we have
\[
g_s\left(\frac{\Lambda}{\cos(\theta)}\right)
={}
A_q(s)
\left(
\frac{\cos\theta}{\Lambda}
\right)^{s+q-2}
{}_2F_1\left(
\frac{s+q-1}{2},
\frac{s+q-2}{2};
s+\frac{q-1}{2};
\frac{\cos^2\theta}{\Lambda^2}
\right),
\]
for all $\theta\in \left[0,\pi/2\right[$. We obtain
\[
\begin{aligned}
&  \int_{S_1}g_s(\cosh\, d(P,Q))\d\sigma(Q) \\
&\qquad =
\mathrm{vol}(S^{q-2})
A_q(s)\Lambda^{-(s+q-2)}
\\
&
\qquad\qquad \times\int_0^{\frac{\pi}{2}}
\sin^{q-2}\theta\,
\cos^{s-1}\theta\,
% \\
% &\qquad\qquad\times
{}_2F_1\left(
\frac{s+q-1}{2},
\frac{s+q-2}{2};
s+\frac{q-1}{2};
\frac{\cos^2\theta}{\Lambda^2}
\right)
\d\theta.
\end{aligned}
\]
Put
\[
a:=\frac{s+q-1}{2},
\qquad
b:=\frac{s+q-2}{2},
\qquad
c:=s+\frac{q-1}{2}.
\]
Expanding the hypergeometric function into its defining series gives
\[
{}_2F_1\left(
a,b;c;
\frac{\cos^2\theta}{\Lambda^2}
\right)
=
\sum_{k=0}^{\infty}
\frac{(a)_k(b)_k}{(c)_k\,k!}
\frac{\cos^{2k}\theta}{\Lambda^{2k}},
\]
hence
\begin{equation}\label{eq:before_exchanging_int_and_sum}
\begin{split}
&\int_0^{\frac{\pi}{2}}
\sin^{q-2}\theta\,
\cos^{s-1}\theta
{}_2F_1\left(
a,b;c;
\frac{\cos^2\theta}{\Lambda^2}
\right)
\d\theta\\
&\qquad \qquad=
\int_0^{\frac{\pi}{2}}\sum_{k=0}^{\infty}
\frac{(a)_k(b)_k}{(c)_k\,k!}
\Lambda^{-2k}
\sin^{q-2}\theta\,
\cos^{s+2k-1}\theta
\d\theta.
\end{split}
\end{equation}
By \cite[Equation 3.621.5]{GR07}, for $\Re(s)>0$ we have
\begin{equation}\label{eq:trigonometric_integral}
\int_0^{\frac{\pi}{2}}
\sin^{q-2}\theta\,
\cos^{s+2k-1}\theta
\,\d\theta
=\frac12 \frac{
\Gamma\left(\frac{q-1}{2}\right)
\Gamma\left(\frac{s}{2}+k\right)
}{
\Gamma\left(\frac{s+q-1}{2}+k\right)
} =
\frac12
\frac{
\Gamma\left(\frac{q-1}{2}\right)
\Gamma\left(\frac{s}{2}\right)
}{
\Gamma\left(\frac{s+q-1}{2}\right)
}
\frac{
\left(\frac{s}{2}\right)_k
}{
\left(\frac{s+q-1}{2}\right)_k
}.
\end{equation}
Using $|(a)_k|=\frac{|\Gamma(a+k)|}{|\Gamma(a)|}\leq \frac{\Gamma (\Re(a)+k)}{|\Gamma(a)|}$ and \eqref{eq:trigonometric_integral} with $s$ replaced by $\Re(s)$, this implies
\[
\begin{aligned}
 \int_0^{\frac{\pi}{2}}\sum_{k=0}^{\infty}
 \left| \frac{(a)_k(b)_k}{(c)_k\,k!}
\Lambda^{-2k}
\sin^{q-2}\theta\,
\cos^{s+2k-1}\theta \right|
\d\theta & \leq \frac{\Gamma\left(\frac{q-1}{2}\right)}{2|\Gamma(a)| }\sum_{k=0}^\infty \frac{\Gamma\left(\frac{\Re(s)}{2}+k\right) |(b)_k|}{|(c)_k|k!}
\Lambda^{-2k}  \\
&  =
\frac{\Gamma\left(\frac{q-1}{2}\right)\Gamma(\Re(a'))}{2|\Gamma(a)| }\sum_{k=0}^\infty \frac{(\Re(a'))_k |(b)_k|}{|(c)_k|k!}
\Lambda^{-2k} 
\end{aligned}
\]
where $a':=\frac{s}{2}$. Since $\Re(c-a'-b)=\frac{1}{2}>0$, the series above converges absolutely for every $\Lambda\geq 1$ (see, e.g., \cite[Theorems 2.1.1 and 2.1.2]{AAR99}). Hence, we can exchange summation and integration in \eqref{eq:before_exchanging_int_and_sum}. Since
\(
a=\frac{s+q-1}{2},
\)
after using \eqref{eq:trigonometric_integral} the factor \((a)_k\) cancels, and therefore
\[
\begin{aligned}
\int_0^{\frac{\pi}{2}}
&\sin^{q-2}\theta\,
\cos^{s-1}\theta\,
{}_2F_1\left(
a,b;c;
\frac{\cos^2\theta}{\Lambda^2}
\right)
\d\theta
 =
\frac12
\frac{
\Gamma\left(\frac{q-1}{2}\right)
\Gamma\left(\frac{s}{2}\right)
}{
\Gamma\left(\frac{s+q-1}{2}\right)
}
{}_2F_1\left(
a',
b;
c;
\frac{1}{\Lambda^2}
\right).
\end{aligned}
\]
Using
\(
\mathrm{vol} (S^{q-2})
=
\frac{
2\pi^{\frac{q-1}{2}}
}{
\Gamma\left(\frac{q-1}{2}\right)
},
\)
we find
\[
\begin{aligned}
\int_{S_1}g_s(\cosh\, d(P,Q))\d\sigma(Q)
={}&
\frac{A_q(s)
\pi^{\frac{q-1}{2}}
\Gamma\left(\frac{s}{2}\right)
}{
\Gamma\left(\frac{s+q-1}{2}\right)
}
\Lambda^{-(s+q-2)}
{}_2F_1\left(
a',
b;
c;
\frac{1}{\Lambda^2}
\right).
\end{aligned}
\]
As shown above, the hypergeometric series ${}_2F_1\left(
a',
b;
c;
\frac{1}{\Lambda^2}
\right)$ converges absolutely for every $\Lambda\geq 1$, hence for every $P\in \H^q$.
Now, substituting the definition of \(A_q(s)\) found in \eqref{eq:A_q(s)} gives
\[
\begin{aligned}
\int_{S_1}g_s(\cosh\, d(P,Q))\d\sigma(Q)
={}&
\frac{
\Gamma(s+q-2)
\Gamma\left(\frac{s}{2}\right)
}{
2^{s+q-1}
\Gamma\left(s+\frac{q-1}{2}\right)
\Gamma\left(\frac{s+q-1}{2}\right)
}
\Lambda^{-(s+q-2)}
\\
&\qquad \qquad \times
{}_2F_1\left(
\frac{s}{2},
\frac{s+q-2}{2};
s+\frac{q-1}{2};
\frac{1}{\Lambda^2}
\right).
\end{aligned}
\]
Finally, the duplication formula
\[
\Gamma(s+q-2)
=
\frac{
2^{s+q-3}
}{
\sqrt{\pi}
}
\Gamma\left(\frac{s+q-2}{2}\right)
\Gamma\left(\frac{s+q-1}{2}\right)
\]
yields the desired result. This completes the proof of the proposition.
\end{proof}

\subsection{Traces over hypersurface integrals of Green's functions}\label{sec:traces_Greens}

Let $(L,Q)$ be an even lattice of signature $(1,q)$ as in Section \ref{sec:finite_sum_resps}, so that $L=(\ell \Z\oplus \ell'\Z)\oplus^\perp K$, where $\ell,\ell'$ are isotropic vectors with $(\ell,\ell')=1$ and $K\subset L$ is a negative definite sublattice of rank $q-1$. Recall the notation $V=L\otimes_{\Z}\R$ and $W=K\otimes \R$, and the identifications $\Gr(V)\simeq \H_W\simeq \H^q$ from Section \ref{sec:hyperbolic-model}.

Since $V$ has signature $(1,q)$, the special orthogonal group $\SO(V)$ has two connected components. We denote by $\SO(V)^+$ the connected component of the identity and define $\SO(L)^+:= \SO(L)\cap \SO(V)^+$.

Let $\Gamma$ be a finite-index subgroup of $\SO(L)^+$ acting trivially on $L'/L$. Then $\Gamma$ acts discontinuously on $\Gr(V)\simeq \H^q$ and this action preserves orientation. The automorphic Green's function associated with $\Gamma$ is defined, for $s\in \C$ with $\Re(s)>1$ and $P,Q\in \H^q$ with $P \not \equiv Q$ mod $\Gamma$, by
$$G_s(P,Q)=\sum_{\gamma \in \Gamma}g_s(\cosh \, d(P,\gamma Q)).$$
This series converges absolutely and uniformly for $(P,Q)$  on compact subsets of $(\H^q\times \H^q)\setminus \{(P,Q): P\equiv Q \text{ mod }\Gamma\}$. Indeed, this can be checked using Lemma \ref{lem:asymptotics_resolvent_kernel} and the fact that the series
\begin{equation*}%\label{eq:series_coshd}
 \sum_{\gamma \in \Gamma}(\cosh \,d(P,\gamma Q))^{-(s+q-2)}   
\end{equation*}
converges absolutely and uniformly for $(P,Q)$ on compact subsets of $ \H^q\times  \H^q$ when $\Re(s)>1$ following the arguments in \cite[Section 3.1]{EGM98} (see also \cite{Mat99}).

\begin{remark}\label{rmk:singularities_Greens}
    It follows from Lemma \ref{lem:asymptotics_resolvent_kernel} that the automorphic Green's function has a singularity at the diagonal of the form 
    $$G_s(P,Q)\sim -\frac{|\Gamma_Q|}{2\pi}\log(d(P,Q)) \quad \text{ as }P\to Q$$
    when $q=2$, and of the form
    $$G_s(P,Q)\sim \frac{|\Gamma_Q|\,\Gamma\left(\frac{q}{2}-1\right)}{4\pi^{\frac{q}{2}}d(P,Q)^{q-2}} \quad \text{ as }P\to Q$$
    when $q>2$. Here $\Gamma_Q$ denotes the stabilizer subgroup of $Q$ in $\Gamma$. This implies that for every $s\in \C$ with $\Re(s)>1$ and every $Q\in \H^q$ the function $P\mapsto G_s(P,Q)$ is locally absolutely integrable over geodesic hypersurfaces.
\end{remark}

For every $\mu \in L'/L$ and $m\in \Z-Q(\mu)$ with $m>0$, the group $\Gamma$ acts on the set $\{Y\in L+\mu:Q(Y)=-m\}$ with finitely many orbits (see, e.g., \cite[Satz 30.2]{Kne02}).

Recall from Section \ref{sec:points_and_geodesic_hypersurfaces} that every $Y\in V$ with $Q(Y)<0$ defines a geodesic hypersurface $S_Y$ given by \eqref{eq:def_S_Y}, which we now consider as a subset of $\H^q$ under the identification $\H_W\simeq \H^q$.

For $Y\in L'$ with $Q(Y)<0$, the action of $\Gamma$ on $\H^q$ restricts to an action of the stabilizer subgroup $\Gamma_Y$ on $S_Y$; the quotient $\Gamma_Y\backslash S_Y$ has finite hyperbolic volume if $q\geq 3$. For $q=2$ this holds if and only if the orthogonal complement of $Y$ in $L\otimes \mathbb{Q}$ is anisotropic (otherwise $S_Y$ joins two cusps and $\Gamma_Y\backslash S_Y$ has infinite length). In all cases $\Gamma_Y$ acts faithfully on $S_Y$.

For every $\mu\in L'/L$, $m\in \Z-Q(\mu)$ with $m>0$ %we regard the set $H(\mu,m)$ defined in \eqref{eq:def_H(mu,m)} as a subset of $\H^q$. For every $\mu \in L'/L$, $m\in \Z-Q(\mu)$ with $m>0$ 
and $P\in \H^q$ we define
\begin{equation}\label{eq:def_traces_mu,m_Greens}
\tr_{\mu,m}\,G_{s}(P,\cdot)
:=
\sum_{\substack{[Y]\in \Gamma \backslash(L+\mu)\\ Q(Y)=-m}}
\int _{\Gamma_Y \backslash S_Y}
G_s(P,Q)\d\sigma(Q).
\end{equation}
Note that the sum over $[Y]$ is a finite sum. Moreover, we show below that each one of the integrals in \eqref{eq:def_traces_mu,m_Greens} converges absolutely when $\Re(s)>1$.

\begin{proposition}\label{prop:tracegreenshypergeometric}
For $P\in \H^q$ and $Y\in V$ with $Q(Y)<0$, write $\Lambda_{P,Y}\coloneq\cosh\, d(P,S_Y)$. Then for every $s\in \C$ with $\Re(s)>1$, every $\mu \in L'/L$, every $m\in \Z-Q(\mu)$ with $m>0$ and every $P\in \H^q$, each integral in \eqref{eq:def_traces_mu,m_Greens} converges absolutely and 
\[
\tr_{\mu,m}\,G_{s}(P,\cdot)=\frac{
\Gamma\left(\frac{s}{2}\right)
\Gamma\left(\frac{s+q-2}{2}\right)
}{
4\sqrt{\pi}\,
\Gamma\left(s+\frac{q-1}{2}\right)
}
\sum_{\substack{Y\in L+\mu\\ Q(Y)=-m}}
\Lambda_{P,Y}^{-(s+q-2)}
{}_2F_1\left(
\frac{s}{2},
\frac{s+q-2}{2};
s+\frac{q-1}{2};
\frac{1}{\Lambda_{P,Y}^2}
\right).
\]
Moreover, the series on the right hand side converges absolutely.
\end{proposition}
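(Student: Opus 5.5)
The plan is the standard unfolding argument, with Proposition \ref{prop:integral_gs_over_S1} doing the analytic work. Fix a representative $Y$ of a class in $\Gamma\backslash(L+\mu)$ with $Q(Y)=-m$. Since $G_s(P,Q)=\sum_{\gamma\in\Gamma}g_s(\cosh d(P,\gamma Q))$ and $\Gamma_Y$ preserves $S_Y$, I first apply Tonelli to the absolute values. Because $\Gamma_Y$ acts faithfully on $S_Y$ (and $\Gamma$ acts faithfully on $\H^q$), this gives
\[
\int_{\Gamma_Y\backslash S_Y}\sum_{\gamma\in\Gamma}|g_s(\cosh d(P,\gamma Q))|\,\d\sigma(Q)
=\sum_{\gamma\in \Gamma/\Gamma_Y}\int_{S_Y}|g_s(\cosh d(P,\gamma Q))|\,\d\sigma(Q).
\]
The identity groups the $\gamma\in\Gamma$ into cosets $\gamma\Gamma_Y$ and uses the $\Gamma_Y$-orbit to unfold $\Gamma_Y\backslash S_Y$ into $S_Y$. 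Since $\gamma$ is an isometry with $\gamma S_Y=S_{\gamma Y}$, the inner integral equals $\int_{S_{\gamma Y}}|g_s(\cosh d(P,Q'))|\,\d\sigma(Q')$.

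As $\gamma$ runs over $\Gamma/\Gamma_Y$, the vector $\gamma Y$ runs bijectively over the $\Gamma$-orbit of $Y$. Here I use that $\Gamma$ fixes $L'/L$, so the orbit stays in $L+\mu$, and that $\Gamma_Y$ is the stabilizer of the vector $Y$ itself. One subtlety needs checking: $S_Y=S_{-Y}$, but $-Y$ lies in a different $\Gamma$-orbit or is $Y$'s own image. Since the sum in \eqref{eq:def_traces_mu,m_Greens} runs over vectors rather than hypersurfaces, both $Y$ and $-Y$ (when in $L+\mu$) are counted separately on both sides, which is consistent. Summing over the finitely many classes $[Y]$ turns the trace into $\sum_{Y\in L+\mu,\,Q(Y)=-m}\int_{S_Y}g_s(\cosh d(P,Q))\,\d\sigma(Q)$. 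I then apply Proposition \ref{prop:integral_gs_over_S1} with $S=S_Y$ and $\Lambda=\Lambda_{P,Y}$ to each term, which yields the stated formula once all interchanges are justified.

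The main obstacle is the absolute convergence of
\[
\sum_{Y}\int_{S_Y}|g_s(\cosh d(P,Q))|\,\d\sigma(Q).
\]
Applying Proposition \ref{prop:integral_gs_over_S1} with $\Re(s)$ in place of $s$ is not literally allowed, because $g_s$ is complex for complex $s$. Instead I will bound $|g_s(t)|$ using the hypergeometric representation \eqref{eq:g_s-hypergeometric} termwise:
\[
|g_s(t)|\le C_s\,t^{-(\sigma+q-2)}\,{}_2F_1\!\left(\tfrac{\sigma+q-1}{2},\tfrac{\sigma+q-2}{2};\sigma+\tfrac{q-1}{2};t^{-2}\right)
\]
with $\sigma=\Re(s)$. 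This uses $|(a)_k|\le(\Re a)_k$, together with the bound $(\Re c)_k\le|(c)_k|$ in the denominator. Repeating the computation in the proof of Proposition \ref{prop:integral_gs_over_S1} with this majorant then shows
\[
\int_{S_Y}|g_s|\,\d\sigma \ll_s \Lambda_{P,Y}^{-(\sigma+q-2)}\,{}_2F_1(\dots;\Lambda_{P,Y}^{-2})\ll \Lambda_{P,Y}^{-(\sigma+q-2)}.
\]
The final bound holds because the hypergeometric factor is bounded for $\Lambda\ge1$, as $c-a'-b=\tfrac12>0$.

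It then remains to show that $\sum_{Y\in L+\mu,\,Q(Y)=-m}\cosh d(P,S_Y)^{-(\sigma+q-2)}$ converges for $\sigma>1$. By Corollary \ref{cor:1/cosh}, $\cosh^2 d(P,S_Y)=|Q(Y_{z^\perp})|/m$. Hence the sum is comparable to $\sum_Y (Q(Y_z)-Q(Y_{z^\perp}))^{-(\sigma+q-2)/2}$, where the summand is a majorant quadratic form on $V$ evaluated at lattice points. This converges when $\sigma+q-2>q+1-2$, i.e. it suffices to count lattice points on the quadric: there are $\ll R^{q-1}$ points with majorant norm at most $R$, so the sum converges for $\sigma+q-2>q-1$, i.e. $\sigma>1$. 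I expect this counting step (equivalently, the convergence $\sum_{\gamma\in\Gamma/\Gamma_Y}\cosh d(P,\gamma S_Y)^{-(\sigma+q-2)}<\infty$, an analogue of the point-pair series in \cite[Section 3.1]{EGM98}) to be the main technical point. Alternatively, it can be deduced from the normal convergence of the series in Theorem \ref{thm:Siegel lift of PC series}, whose terms have the same shape $|Q(Y_{z^\perp})|^{(3-q)/4-s'}$ with $2s'-\frac{3-q}{2}$ matching $\sigma+q-2$.
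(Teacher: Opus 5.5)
Your architecture is the paper's. You unfold $\Gamma$ into cosets $\gamma\Gamma_Y$, identify $\Gamma/\Gamma_Y$ with the orbit $\Gamma Y\subset L+\mu$, and majorize $|g_s|$ termwise to get $\int_{S_{\gamma Y}}|g_s(\cosh d(P,Q))|\,\d\sigma(Q)\ll_s\Lambda_{P,\gamma Y}^{-(\sigma+q-2)}$. You then sum over the quadric. One correction in the middle step: $|(a)_k|\le(\Re a)_k$ is false, since $|a+i|\ge\Re a+i$ for each factor. What is true, and what the paper uses, is $|(a)_k|=|\Gamma(a+k)|/|\Gamma(a)|\le\frac{\Gamma(\Re a)}{|\Gamma(a)|}(\Re a)_k$, which follows from $|\Gamma(z)|\le\Gamma(\Re z)$. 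So your majorant survives with a larger constant $C_s$.

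You correctly locate the crux, but the final lattice-point count is asserted rather than proved, and that is a genuine gap. It is precisely where the threshold $\Re(s)>1$ comes from.

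The naive count loses one dimension. Bounding the shell $|a+c|=R$ by all $b\in K'$ with $|Q(b)|\le m+R^2/4$ gives $O(R^{q-1})$ vectors per shell. Since $\Lambda_{P_0,Y}^{-(\sigma+q-2)}=\bigl(1+\frac{(a+c)^2}{4m}\bigr)^{-(\sigma+q-2)/2}$, this yields only $\sum_R R^{1-\sigma}$, i.e.\ $\sigma>2$, which misses even $s=2$. This is the count used in the paper; its display \eqref{eq:series_over_Y} drops the factor $\frac12$ in the exponent, which is what makes the count look sufficient.

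Your fallback via Theorem \ref{thm:Siegel lift of PC series} needs $\frac{\sigma}{2}+\frac{q-1}{4}>\max\{1,\frac{1+q}{4}\}$, i.e.\ $\sigma>\max\{1,\frac{5-q}{2}\}$. It therefore covers $q\ge3$ and the values $s=2+2j$ used later, but not $q=2$ with $1<\Re(s)\le\frac32$. Also, your bound $\ll R^{q-1}$ is not literally true when $q=2$ and $Y^\perp$ is isotropic. For the lattice \eqref{eq:L-binary}, the forms with $b^2-4ac=1$ and entries bounded by $R$ number $\asymp R\log R$, though a bound $R^{q-1+\varepsilon}$ would suffice.

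Here is a uniform fix.
\begin{itemize}
\item On the shell $a+c=\pm R$ put $t:=a-c$. Then $Q(Y)=-m$ reads $t^2-4Q(b)=R^2+4m$. This is a representation of $R^2+4m$ by a positive definite form, integral after scaling, in the $q$ variables $(t,b)\in\Z\times K'$.
\item The pair $(t,b)$ together with the sign of $a+c$ determines $Y$.
\item Representation numbers of such forms are $\ll_\varepsilon N^{q/2-1+\varepsilon}$: diagonalize over $\Q$, fix all but two variables, and apply the divisor bound.
\item Hence each shell contains $\ll_\varepsilon R^{q-2+\varepsilon}$ vectors, and $\sum_{R\ge1}R^{q-2+\varepsilon}R^{-(\sigma+q-2)}<\infty$ once $\varepsilon<\sigma-1$.
\end{itemize}
With this count, and the comparison $\Lambda_{P_0,Y}\le2\cosh d(P_0,P)\,\Lambda_{P,Y}$ to pass to the base point $P_0=(0,1)$, your argument proves the proposition for all $\Re(s)>1$.
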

\begin{proof}
Let $Y\in V$ with $Q(Y)<0$. For $\Re(s)>1$, the definition of the Green's function gives
\begin{align}\label{eq:integral_double_sum}
      \int_{\Gamma_Y\backslash S_Y}G_{s}(P,Q)\d \sigma(Q)
    &=\int_{\Gamma_Y\backslash S_Y}\sum_{\gamma_1  \in \Gamma/\Gamma_Y}\sum_{\gamma_2\in \Gamma_Y}g_{s}(\cosh\, d(P,\gamma_1\gamma_2 Q))\d \sigma(Q).
\end{align}
Now
\begin{align*}
    \int_{\Gamma_Y\backslash S_Y}\sum_{\gamma_1  \in \Gamma/\Gamma_Y}\sum_{\gamma_2\in \Gamma_Y}|g_{s}(\cosh\, d(P,\gamma_1\gamma_2 Q))|\d \sigma(Q) 
    %=
    %\sum_{\gamma_1  \in \Gamma/\Gamma_Y}\int_{ S_Y}|g_{s}(\cosh\, d(P,\gamma_1 Q))|d \sigma(Q) \\
    =
    \sum_{\gamma_1  \in \Gamma/\Gamma_Y}\int_{ S_{\gamma_1Y}}|g_{s}(\cosh\, d(P, Q))|\d \sigma(Q).
\end{align*}
From the proof of the absolute convergence of the hypersurface integral in Proposition \ref{prop:integral_gs_over_S1} it follows that
$$\int_{ S_{\gamma_1 Y}}|g_{s}(\cosh\, d(P, Q))|\d \sigma(Q)=O_{s}\left(\Lambda_{P,\gamma_1Y}^{-(\Re(s)+q-2)}\right)$$
for all $P\in \H^q$. Letting $P_0:=(0,1)\in \H^q$ we have $d(P_0,S_{\gamma_1Y})\leq d(P_0,P)+d(P,S_{\gamma_1Y})$ by the triangular inequality. Using that $\cosh(a+b)\leq 2\,\cosh(a)\cosh(b)$ for all $a,b\in \R$ and the monotonicity of $\cosh(x)$ for $x\geq 0$, we get $\Lambda_{P_0,\gamma_1Y}\leq 2\,\cosh(d(P_0,P))\Lambda_{P,\gamma_1Y}$. This implies
\begin{align*}
&    \sum_{\gamma_1  \in  \Gamma/\Gamma_Y}\int_{ S_{\gamma_1Y} }|g_{s}(\cosh\, d(P, Q))|\d\sigma(Q) \\
& \qquad \qquad \qquad  = \cosh(d(P_0,P))^{\Re(s)+q-2}\cdot O_s\left(  \sum_{\gamma_1 \in  \Gamma/\Gamma_Y} \Lambda_{P_0,\gamma_1Y}^{-(\Re(s)+q-2)}\right)\\
& \qquad \qquad \qquad \leq  \cosh(d(P_0,P))^{\Re(s)+q-2}\cdot O_s\left(  \sum_{\substack{Y\in L'\\ Q(Y)=-m}} \Lambda_{P_0,Y}^{-(\Re(s)+q-2)}\right).
\end{align*}
Now, for every $Y=a\ell'+c\ell+b\in L'$ with $Q(Y)=-m$ we have $\Lambda_{P_0,Y}^2=1+\frac{(a+c)^2}{4m}$ by Corollary \ref{cor-distance}, hence
\begin{equation}\label{eq:series_over_Y}
    \sum_{\substack{Y\in L'\\ Q(Y)=-m}} \Lambda_{P_0,Y}^{-(\Re(s)+q-2)}=\sum_{\substack{a,c\in \Z,b\in K'\\ac+Q(b)=-m}}\left(1+\frac{(a+c)^2}{4m}\right)^{-(\Re(s)+q-2)}.
\end{equation}
For every $R>0$ we can bound the number of possible values of $a,c\in \Z$, $b\in K'$ satisfying $ac+Q(b)=-m$ and $|a+c|=R$, as follows: noting that $|Q(b)|=m+ac\leq m+\frac{(a+c)^2}{4}= m+\frac{R^2}{4}$ we see that there are $O(R^{q-1})$ possibilities for $b$ (since $K'$ is negative definite of dimension $q-1$). For fixed $b$ we have $ac=|Q(b)|-m$, which together with $|a+c|= R$ gives at most 4 possibilities for $(a,c)$.
%satisfying $|Q(b)|\neq  m$ we have $ac=|Q(b)|-m\neq 0$ giving at most 4 possibilities for $(a,c)$, whereas for those  $b$ satisfying  $|Q(b)|=m$ we have $ac=0$ hence $O(R)$ possibilities for $(a,c)$. Since there are only finitely many $b\in K'$ with $|Q(b)|=m$,
%$O_\varepsilon(R^{q-1+\varepsilon})$ 
Hence, we get $O(R^{q-1})$
possible values for $a,c\in \Z$, $b\in K'$ satisfying $ac+Q(b)=-m$ and $|a+c|= R$. This implies that \eqref{eq:series_over_Y} is bounded above by a positive constant multiple of
$$\sum_{R=1}^\infty R^{-2(\Re(s)+q-2)}R^{q-1}=\sum_{R=1}^\infty R^{-2\Re(s)-q+3},$$
which converges since $\Re(s)>1$.
Having proved the absolute convergence of the right hand side of \eqref{eq:integral_double_sum}, we can exchange summation and integration and get
\begin{align*}
      \int_{\Gamma_Y\backslash S_Y}G_{s}(P,Q)\d\sigma(Q)
    &=\sum_{\gamma_1  \in \Gamma/\Gamma_Y}\sum_{\gamma_2\in \Gamma_Y}\int_{\Gamma_Y\backslash S_Y}g_{s}(\cosh\, d(P,\gamma_1\gamma_2 Q))\d\sigma(Q)\\
    % &= \sum_{\gamma_1  \in \Gamma/\Gamma_Y}\int_{S_Y}g_{s}(\cosh\, d(\gamma_1^{-1}P, Q))\d\sigma(Q)\\
    % &= \sum_{\gamma_1  \in \Gamma_Y\backslash \Gamma}\int_{S_Y}g_{s}(\cosh\, d(\gamma_1 P, Q))\d\sigma(Q)\\
    &= \sum_{\gamma_1  \in  \Gamma/\Gamma_Y}g_{\gamma_1Y,s}( P),
\end{align*}
where
$$g_{\gamma_1Y,s}(P):=\int_{S_{\gamma_1Y}} g_s(\cosh\, d(P,Q))\d\sigma(Q).$$
%satisfies $g_{Y,s}(\gamma P)=g_{\gamma^{-1} Y,s}(P)$ for all $\gamma\in \Gamma$. 
This implies
\begin{align*}
    \tr_{\mu,m}\,G_{s}(P,\cdot) 
    %& 
    %=\sum_{\substack{[Y]\in \Gamma\backslash (L+\mu)\\ Q(Y)=-m}}\sum_{\gamma_1  \in \Gamma_Y\backslash \Gamma}g_{Y,s}(\gamma_1 P)\\
     =\sum_{\substack{[Y]\in \Gamma\backslash (L+\mu)\\ Q(Y)=-m}}\sum_{\gamma_1  \in  \Gamma/\Gamma_Y}g_{\gamma_1Y,s}( P)
     =\sum_{\substack{Y\in L+\mu\\ Q(Y)=-m}}g_{Y,s}(P).
\end{align*}
%Given $Y\in L+\mu$ with $Q(Y)=-m$ we can choose $\gamma_0\in \mathrm{Iso}^+(\H^q)$ such that $\gamma_0 S_Y=S_1$.
%is the vertical line going from $0$ to $i\infty$, so that $\gamma_0 S_X=S_{X_0}$ with $X_0=[0,b_0,0]$ with $b_0>0$. 
%Then
%\begin{align*}
%g_{Y,s}(P)& =\int_{\gamma_0^{-1}S_1} g_s(\cosh \, d(P,Q))\d\sigma(Q)
%=\int_{S_1} g_s(\cosh\, d(\gamma_0 P,Q))\d\sigma(Q).
%\end{align*}
By Proposition \ref{prop:integral_gs_over_S1}, %and the fact that $d(\gamma_0P,S_1)=d(P,S_Y)$ 
 $g_{Y,s}(P)$ equals
%\todo[inline]{The Proposition was proved only for points of the form $(0,...)$. Should we say how it extends to arbitrary $P$?} corrected!
\[
\frac{
\Gamma\left(\frac{s}{2}\right)
\Gamma\left(\frac{s+q-2}{2}\right)
}{
4\sqrt{\pi}\,
\Gamma\left(s+\frac{q-1}{2}\right)
}
(\cosh\, d(P,S_Y) )^{-(s+q-2)}
{}_2F_1\left(
\frac{s}{2},
\frac{s+q-2}{2};
s+\frac{q-1}{2};
\frac{1}{(\cosh\, d(P,S_Y))^2}
\right).
\]
This proves the desired formula for $\tr_{\mu,m}\,G_{s}(P,\cdot)$. Finally, the fact that the right hand side of the formula converges absolutely 
%locally uniformly on $P\in \H^q$ 
follows from the absolute convergence of \eqref{eq:series_over_Y}  and the comparison $\Lambda_{P_0,Y}\leq 2\,\cosh(d(P_0,P))\Lambda_{P,Y}$. This
 completes the proof of the proposition.
\end{proof}

Using Proposition \ref{prop:tracegreenshypergeometric} we obtain the following theorem.

\begin{theorem}\label{thm:traces_Greens_equals_theta_lift}
Let $\mu\in L'/L$, $m\in \Z-Q(\mu)$ with $m>0$ and $P\in \H^q\setminus H(\mu,m)$. Then, for every $s\in \C$ with $\Re(s)>\max\left\{1,\frac{5-q}{2}\right\}$ we have
$$\tr_{\mu,m}\,G_{s}(P,\cdot)= \frac{\Gamma\left(\frac{s}{2}\right)}{16\pi \sqrt{m}} \, \Phi_{\mu,m}\left(P,\frac{s}{2}+\frac{q-1}{4}\right).$$
%\left(\frac{16\pi \sqrt{m}}{\Gamma\left(\frac{s}{2}\right)}\right)^{-1}\times  \Psi(F_{\mu,m,\kappa,\frac{s}{2}+\frac{q-1}{4}},z).$$

%{\claudia The constant might be (I hope ;) ) $\left(\frac{16\pi \sqrt{m}}{\Gamma\left(\frac{s}{2}\right)}\right)^{-1}$.}
\end{theorem}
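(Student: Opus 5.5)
The plan is to compare the two infinite-series representations term by term: Proposition \ref{prop:tracegreenshypergeometric} for the trace, and Theorem \ref{thm:Siegel lift of PC series} for $\Phi_{\mu,m}$. Both are sums over the same index set $\{Y\in L+\mu: Q(Y)=-m\}$, and both have summands of the shape (power)$\times{}_2F_1$. It therefore suffices to show that, after the substitution $s'=\frac{s}{2}+\frac{q-1}{4}$, the summands agree up to a constant independent of $Y$ and $P$.

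First I check that the hypotheses match. The condition $\Re(s')>\max\{1,\frac{1+q}{4}\}$ of Theorem \ref{thm:Siegel lift of PC series} is equivalent to $\Re(s)>\max\{\frac{5-q}{2},1\}$, which is exactly the range in the statement. In that range $\Re(s)>1$, so Proposition \ref{prop:tracegreenshypergeometric} also applies. The assumption $P\notin H(\mu,m)$ is what makes $\Phi_{\mu,m}(P,s')$ defined. Both series converge absolutely, so term-by-term identification is legitimate.

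Next I match the hypergeometric parameters. For the lift they are
\[
s'+\tfrac{q-3}{4}=\tfrac{s+q-2}{2},\qquad s'-\tfrac{q-1}{4}=\tfrac{s}{2},\qquad 2s'=s+\tfrac{q-1}{2}.
\]
By symmetry of ${}_2F_1$ in its first two parameters, these coincide with the parameters in Proposition \ref{prop:tracegreenshypergeometric}. For the arguments, Corollary \ref{cor:1/cosh} gives $m/|Q(Y_{z^\perp})|=\Lambda_{P,Y}^{-2}$, so the two arguments agree. It also gives $|Q(Y_{z^\perp})|=m\Lambda_{P,Y}^2$, which I use in the power factor:
\[
\left(4\pi m\Lambda_{P,Y}^2\right)^{\frac{3-q}{4}-s'}=(4\pi m)^{-\frac{s+q-2}{2}}\Lambda_{P,Y}^{-(s+q-2)}.
\]

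Finally I collect constants. The prefactor $2(4\pi m)^{s'-\frac{1-q}{4}}=2(4\pi m)^{\frac{s+q-1}{2}}$ combines with the factor above to give $2(4\pi m)^{1/2}=4\sqrt{\pi m}$. Hence
\[
\Phi_{\mu,m}(P,s')=\frac{4\sqrt{\pi m}\,\Gamma\left(\frac{s+q-2}{2}\right)}{\Gamma\left(s+\frac{q-1}{2}\right)}\sum_{Y}\Lambda_{P,Y}^{-(s+q-2)}\,{}_2F_1\left(\tfrac{s}{2},\tfrac{s+q-2}{2};s+\tfrac{q-1}{2};\Lambda_{P,Y}^{-2}\right).
\]
Dividing the constant in Proposition \ref{prop:tracegreenshypergeometric}, namely $\Gamma(\frac{s}{2})\Gamma(\frac{s+q-2}{2})/(4\sqrt{\pi}\,\Gamma(s+\frac{q-1}{2}))$, by the constant above gives $\Gamma(\frac{s}{2})/(16\pi\sqrt m)$, which is the claimed identity.

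There is no real obstacle. The only points needing care are the bookkeeping of the exponents of $4\pi m$, and making sure the sums on both sides run over $L+\mu$ rather than $L\pm\mu$.
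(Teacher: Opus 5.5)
Your proposal is correct and is essentially the paper's proof. Like the paper, you substitute $s\mapsto \frac{s}{2}+\frac{q-1}{4}$ in Theorem~\ref{thm:Siegel lift of PC series}, use Corollary~\ref{cor:1/cosh} to write $|Q(Y_{z^\perp})|=m\Lambda_{P,Y}^2$, and compare summands with Proposition~\ref{prop:tracegreenshypergeometric}; your parameter matching, exponent bookkeeping giving $4\sqrt{\pi m}$, final constant $\Gamma(\frac{s}{2})/(16\pi\sqrt{m})$, and translation of the range to $\Re(s)>\max\{1,\frac{5-q}{2}\}$ all check out.
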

\begin{proof}
    Let $z\in \Gr(L)$ be the point corresponding to $P$. Recall from Corollary \ref{cor:1/cosh} that
    \[
    \left|Q(Y_{z^\perp})\right| =m \Lambda_{P,Y}^2,
    \]
    where $\Lambda_{P,Y}=\cosh\, d(P,S_Y)$. Substituting $s\mapsto \frac{s}{2}+\frac{q-1}{4}$ in Theorem \ref{thm:Siegel lift of PC series} gives
    \begin{align*}
        \Phi_{\mu,m}\left(P,\frac{s}{2}+\frac{q-1}{4}\right) &= 4\sqrt{\pi m } \frac{\Gamma\left(\frac{s}{2}+\frac{q}{2}-1\right)}{\Gamma\left(s+\frac{q-1}{2}\right)} \\
        &\quad \times \sum_{\substack{Y\in L+\mu\\Q(Y)=-m}} \Lambda_{P,Y}^{2-q-s} \, _2F_1\left( \frac{s+q}{2}-1,\frac{s}{2},s+\frac{q-1}{2}; \frac{1}{\Lambda^2_{P,Y}}\right).
    \end{align*}
    Comparing with Proposition \ref{prop:tracegreenshypergeometric} gives the result.
\end{proof}

\begin{corollary}\label{cor:linear_combinatio_traces_is_theta_lift}
    Let $\kappa=\frac{1-q}{2}$, $j\in \N_0$ and $f\in M^!_{\kappa-2j,L}$. Then, for every $P\in \H^q$ we have
    \begin{equation}\label{eq:evaluation_at_CM_traces_over_geodesics_G_even}
        \sum_{\mu \in L'/L}\sum_{\substack{m\in \Z-Q(\mu)\\m>0}} m^{j+\frac{1}{2}}a_f(\mu,-m)\,\mathrm{tr}_{\mu,m}G_{2+2j}(P,\cdot)=\frac{1}{2(4\pi)^{j+1}}\Psi^j(f,P).
    \end{equation}
\end{corollary}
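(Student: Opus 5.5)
Set $s=2+2j$ in Theorem \ref{thm:traces_Greens_equals_theta_lift}. The corresponding spectral parameter is $s_0=\frac{s}{2}+\frac{q-1}{4}=1+j+\frac{q-1}{4}=1-\frac{\kappa}{2}+j$, which is exactly the value in \eqref{eq:raising_of_weakly_holomorphic}. We also need $\Re(s)>\max\{1,\frac{5-q}{2}\}$. For $q\ge 2$ this holds because $2+2j\geq 2>\frac{3}{2}$. For $P\in\H^q$ outside the finite union of the $H(\mu,m)$ with $a_f(\mu,-m)\neq0$, we therefore get
\[
\mathrm{tr}_{\mu,m}G_{2+2j}(P,\cdot)=\frac{j!}{16\pi\sqrt m}\,\Phi_{\mu,m}(P,s_0),
\]
using $\Gamma(1+j)=j!$.

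Next, multiply by $m^{j+\frac12}a_f(\mu,-m)$ and sum over $\mu$ and $m>0$; the sum is finite because $f$ has a finite principal part. The left side of \eqref{eq:evaluation_at_CM_traces_over_geodesics_G_even} becomes $\frac{j!}{16\pi}\sum_{\mu,m} m^j a_f(\mu,-m)\Phi_{\mu,m}(P,s_0)$. By \eqref{eq:theta_lift_with_raising_operator}, this sum equals $\frac{2}{(4\pi)^j j!}\Psi^j(f,P)$. Hence the left side equals
\[
\frac{j!}{16\pi}\cdot\frac{2}{(4\pi)^j j!}\Psi^j(f,P)=\frac{1}{8\pi(4\pi)^j}\Psi^j(f,P)=\frac{1}{2(4\pi)^{j+1}}\Psi^j(f,P),
\]
as claimed, at all $P$ off the singular set.

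It remains to extend the identity to every $P\in\H^q$. The right side $\Psi^j(f,P)$ is continuous on all of $\H_W$ by \cite[Theorem 6.2]{borcherds}; equivalently, this follows from the explicit finite formula in Theorem \ref{thm:finite_sum_representation}. So the main point is continuity in $P$ of each $\mathrm{tr}_{\mu,m}G_{2+2j}(P,\cdot)$. I would get it from Proposition \ref{prop:tracegreenshypergeometric}. Each summand $\Lambda_{P,Y}^{-(s+q-2)}{}_2F_1(\cdots;\Lambda_{P,Y}^{-2})$ is continuous in $P$, including where $\Lambda_{P,Y}=1$. This is because $c-a-b=\frac12>0$, so the hypergeometric series converges absolutely and uniformly on $[0,1]$. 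The series over $Y$ converges locally uniformly in $P$: its terms are dominated, uniformly for $P$ in a compact set, via the comparison $\Lambda_{P_0,Y}\le 2\cosh(d(P_0,P))\Lambda_{P,Y}$ and the convergent majorant \eqref{eq:series_over_Y}. This is the one step needing care, since the singular set $H(\mu,m)$ is exactly where $\Lambda_{P,Y}=1$. Once the uniform domination is in place, both sides are continuous and agree on a dense open set, so they agree everywhere.
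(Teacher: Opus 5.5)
Your proposal is correct and follows the paper's proof. You specialize Theorem \ref{thm:traces_Greens_equals_theta_lift} to $s=2+2j$ (so that $\frac{s}{2}+\frac{q-1}{4}=s_0$), combine it with \eqref{eq:theta_lift_with_raising_operator} off the union of the $H(\mu,m)$ with $a_f(\mu,-m)\neq 0$, and extend to all $P$ by continuity; your argument for continuity of the trace side (continuity of ${}_2F_1$ on $[0,1]$ since $c-a-b=\frac12$, plus locally uniform domination via Proposition \ref{prop:tracegreenshypergeometric}) supplies a step the paper only asserts.
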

\begin{proof}
Choose $s=2+2j$ and let $s_0=1-\frac{\kappa}{2}+j=\frac{3+q}{4}+j$ so that $\frac{s}{2}+\frac{q-1}{4}=s_0$. First, assume that $P\in \H^q$ lies outside the union of the sets $H(\mu,m)$ for those  $\mu \in L'/L$ and $m>0$ with $a_f(\mu,-m)\neq 0$. In this case  the result follows directly from \eqref{eq:theta_lift_with_raising_operator} and Theorem \ref{thm:traces_Greens_equals_theta_lift}. The general case then follows by continuity in $P$ of the left and right hand sides of \eqref{eq:evaluation_at_CM_traces_over_geodesics_G_even}.
\end{proof}

We conclude this section with the following theorem which is a direct consequence of Corollary \ref{cor:linear_combinatio_traces_is_theta_lift} and Corollary \ref{cor:geo_interpretation}

\begin{theorem}\label{thm:main_general}
   Let $\kappa=\frac{1-q}{2}$, $j\in \N_0$ and $f\in M^!_{\kappa-2j,L}$. Then, for every $P\in \H^q$ we have
   \[
   \begin{aligned}
     & \sum_{\mu \in L'/L}\sum_{\substack{m\in \Z-Q(\mu)\\m>0}} m^{j+\frac{1}{2}}a_f(\mu,-m)\,\mathrm{tr}_{\mu,m}G_{2+2j}(P,\cdot) \\
     &\quad =\frac{y}{\left(q-1+2j\right)(j+1)}\sum_{Y\in K'}|Q(Y)|^{j+1} a_{f}(Y,Q(Y))\\
   &\quad\quad +  \frac{j!}{2} \sum_{Y \in K'}  a_f(Y,Q(Y))\sum_{r=0}^j\frac{\left(j+\frac{1+q}{2}\right)_r|Q(Y)|^{j-r}}{(j-r)!(2+2r)!y^{1+2r}}  \mathbb{B}_{2+2r}\left((Y,x)\right) \\
 & \quad\quad\quad -2\sum_{\substack{Y\in L'\\ Q(Y)<0,a_Y<0\\ \sign_Y(P)=1}}a_{f}(Y,Q(Y)) |Q(Y)|^{j+\frac{1}{2}}  \sum_{r=0}^j\binom{j}{r} 
\frac{\left(j+\frac{1+q}{2}\right)_r}{\left(3/2\right)_r}
   \sinh d(P,S_Y)^{1+2r}.
   \end{aligned}
   \]
\end{theorem}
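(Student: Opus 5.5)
The plan is to combine Corollary \ref{cor:linear_combinatio_traces_is_theta_lift} with Corollary \ref{cor:geo_interpretation}; this is a bookkeeping argument with no new analysis.

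First, Corollary \ref{cor:linear_combinatio_traces_is_theta_lift} says that for every $P\in\H^q$ the left hand side of the claimed identity equals $\frac{1}{2(4\pi)^{j+1}}\Psi^j(f,P)$. It holds at every $P$, including points on the singular hypersurfaces $H(\mu,m)$, by the continuity argument already given there. So it suffices to substitute the formula of Corollary \ref{cor:geo_interpretation} for $\frac{1}{(4\pi)^{j+1}}\Psi^j(f,P)$ and multiply it by $\frac12$.

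Doing this term by term gives the following.
\begin{enumerate}
\item In the first line, $\frac12\cdot\frac{y}{(\frac{q-1}{2}+j)(j+1)}=\frac{y}{(q-1+2j)(j+1)}$.
\item The Bernoulli line simply acquires the factor $\frac{j!}{2}$.
\item The hemisphere term $-4\sum\cdots$ becomes $-2\sum\cdots$.
\end{enumerate}
The sums are unchanged. They are indexed by $Y\in K'$, resp.\ by $Y\in L'$ with $Q(Y)<0$, $a_Y<0$ and $\sign_Y(P)=1$, and they involve the same coefficients $a_f(Y,Q(Y))$ and the same factors $\sinh d(P,S_Y)^{1+2r}$.

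The one place needing care is to confirm that Corollary \ref{cor:geo_interpretation} is correctly derived from Theorem \ref{thm:finite_sum_representation}. Two ingredients are involved.

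The first is that $(j+1-\kappa)_r=(j+\frac{1+q}{2})_r$, which holds because $\kappa=\frac{1-q}{2}$.

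The second concerns the last line of \eqref{eq:finite_sum_representation}. Here one writes $a(y^2-Q(x))+(b,x)+c=2y\sqrt{|Q(Y)|}\sinh d(P,S_Y)$, which is Corollary \ref{cor-distance} with the sign fixed by $\sign_Y(P)=1$. One also uses $Q(Y)^{j-r}=(-1)^{j-r}|Q(Y)|^{j-r}$. Together with the stated identity $(-1)^r(j+1-\kappa)_r\Gamma(-\frac12-r)=-2\sqrt\pi\frac{(j+\frac{1+q}{2})_r}{(3/2)_r}$ and the factor $\frac{(2y)^{1+2r}}{4^ry^{1+2r}}=2$, the prefactor becomes $-4\binom{j}{r}\frac{(j+\frac{1+q}{2})_r}{(3/2)_r}|Q(Y)|^{j+\frac12}$.

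Checking these powers of $2$ and the signs $(-1)^j(-1)^{j-r}(-1)^r=1$ is the main, if minor, obstacle. Once it is done, halving yields exactly the stated formula.
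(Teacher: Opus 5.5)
Your proposal is correct and is exactly the paper's argument: the theorem is obtained by substituting the formula of Corollary~\ref{cor:geo_interpretation} into the identity of Corollary~\ref{cor:linear_combinatio_traces_is_theta_lift} and halving each term. Your re-check that Corollary~\ref{cor:geo_interpretation} follows from Theorem~\ref{thm:finite_sum_representation} is also accurate; this covers the factor $\frac{(2y)^{1+2r}}{4^r y^{1+2r}}=2$, the sign bookkeeping, and the identity $(j+1-\kappa)_r=(j+\frac{1+q}{2})_r$.
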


\subsection{Evaluation at special points}

We continue with the notation of the previous section, so $L=\Z \ell+\Z\ell'+K$ has signature $(1,q)$ and $W=K\otimes \R$. 

For every $X=a\ell'+c\ell+b\in L\otimes \Q$ with $Q(X)>0$, the point $z=\R X\in \Gr(L)$ is special in the sense of Definition \ref{def:special_point} and corresponds under $\Gr(L)\simeq \H_W$ to
$$P_X=\left(\frac{b}{a},\frac{\sqrt{Q(X)}}{|a|}\right)\in (K\otimes \Q)\times \sqrt{Q(X)}\Q^+\subset \H_{W}.$$

As a consequence of Theorem \ref{thm:finite_sum_representation} we obtain the following result.

\begin{theorem}\label{thm:algebraicity_traces}
Let $\kappa=\frac{1-q}{2}$, $j\in \N_0$, and let $f\in M^!_{\kappa-2j,L}$ have rational principal part. Then for every $X\in L\otimes \Q$ with $Q(X)>0$ we have
$$\frac{1}{(4\pi)^{j+1}}\Psi^j(f,P_X)\in \sqrt{Q(X)}\Q.$$
In particular
\[\sum_{\mu \in L'/L}\sum_{\substack{m\in \Z-Q(\mu)\\m>0}} m^{j+\frac{1}{2}}a_f(\mu,-m)\,\mathrm{tr}_{\mu,m}G_{2+2j}(P_X,\cdot)\in \sqrt{Q(X)}\Q.\]
\end{theorem}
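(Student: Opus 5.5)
The plan is to evaluate the finite formula of Theorem \ref{thm:finite_sum_representation} term by term at $P=P_X=(x,y)$, with $x=b/a\in K\otimes\Q$ and $y=\sqrt{Q(X)}/|a|\in\sqrt{Q(X)}\,\Q_{>0}$. We may assume $a\neq 0$: if $a=0$, replace $X$ by $\gamma X$ for some $\gamma\in\Gamma$ with $a_{\gamma X}\neq 0$. The lift is $\Gamma$-invariant, $\gamma X\in L\otimes\Q$ and $Q(\gamma X)=Q(X)$. First, since $f$ has rational principal part and $\kappa-2j<0$, Lemma \ref{lemma:mcgrawextended} shows that all Fourier coefficients $a_f(\mu,n)$ are rational. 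Theorem \ref{thm:finite_sum_representation} also guarantees that every sum in \eqref{eq:finite_sum_representation} is finite, so it suffices to show that each individual summand lies in $\sqrt{Q(X)}\,\Q$.

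Next I check the three groups of terms in turn.

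\textbf{First line.} The term is $y$ times a rational number, since $|Q(Y)|\in\Q$ for $Y\in K'$. It therefore lies in $\sqrt{Q(X)}\,\Q$.

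\textbf{Second line.} Here $(Y,x)\in\Q$ because $x\in K\otimes\Q$ and $Y\in K'$. Since $\mathbb B_{2+2r}$ agrees with a rational polynomial after reduction mod $1$, its value at a rational argument is rational. The prefactor $y^{-1-2r}$ lies in $Q(X)^{-1/2-r}\Q=\sqrt{Q(X)}\cdot Q(X)^{-1-r}\Q\subset\sqrt{Q(X)}\,\Q$. The coefficients $(j+1-\kappa)_r$ are rational because $\kappa\in\frac12\Z$.

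\textbf{Third and fourth lines.} For $Y=a'\ell'+c\ell+b'\in L'$, the quantity $a'(y^2-Q(x))+(b',x)+c$ is rational, since $y^2=Q(X)/a^2\in\Q$. Its $(1+2r)$-th power divided by $4^ry^{1+2r}$ is again in $\sqrt{Q(X)}\,\Q$. The only remaining factor is $\Gamma(-\frac12-r)/\sqrt\pi$. By Remark \ref{rmk:Gamma(-1/2-r)} this equals $(-1)^{r+1}2^{r+1}/(2r+1)!!\in\Q$, so the $\sqrt\pi$ in the prefactor cancels exactly.

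Collecting these gives $\frac{1}{(4\pi)^{j+1}}\Psi^j(f,P_X)\in\sqrt{Q(X)}\,\Q$. The second assertion then follows directly from Corollary \ref{cor:linear_combinatio_traces_is_theta_lift}, since the left-hand side equals $\frac{1}{2(4\pi)^{j+1}}\Psi^j(f,P_X)$.

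There is no genuine obstacle once Theorem \ref{thm:finite_sum_representation} is available. The step needing the most care is the bookkeeping of transcendental factors: checking that every occurrence of $\pi$ is absorbed, namely the $(4\pi)^{j+1}$ by the normalization and the $\sqrt\pi$ by $\Gamma(-\frac12-r)$, and that every power of $y$ is odd so that exactly one factor $\sqrt{Q(X)}$ survives.
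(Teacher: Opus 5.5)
Your proposal is correct and follows the paper's proof. You evaluate the finite formula of Theorem \ref{thm:finite_sum_representation} at $P_X$, use the rationality of all coefficients from Lemma \ref{lemma:mcgrawextended} together with Remark \ref{rmk:Gamma(-1/2-r)}, and then pass to the traces via Corollary \ref{cor:linear_combinatio_traces_is_theta_lift}; the only superfluous step is your treatment of $a=0$, since $Q(X)=ac+Q(b)$ with $Q(b)\le 0$ forces $a\neq 0$ whenever $Q(X)>0$ (and $P_X$ would not even be defined otherwise), so no replacement by $\gamma X$ is needed.
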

\begin{proof}
Write $P_X=(x,y)=\left(\frac{b}{a},\frac{\sqrt{Q(X)}}{|a|}\right)$. Then $x\in K\otimes \Q$ and $y\in \sqrt{Q(X)}\Q^+$. Theorem \ref{thm:finite_sum_representation} expresses $\frac{1}{(4\pi)^{j+1}}\Psi^j(f,P_X)$ as a finite sum.  
The first assertion follows from this expression together with the rationality of the coefficients $a_f\left(Y,Q(Y)\right)$ for all $Y\in L'$ (Lemma \ref{lemma:mcgrawextended}) and Remark \ref{rmk:Gamma(-1/2-r)}. The second result follows from the first together with Corollary \ref{cor:linear_combinatio_traces_is_theta_lift}.% This completes our proof.
\end{proof}

\begin{corollary}\label{cor:algebraicity_traces}
Let $\kappa=\frac{1-q}{2}$, $j\in \N_0$ and let $\{\lambda(\mu,m)\}$ be a rational relation for $S_{2-\kappa+2j,L^-}$. Then for every $X\in L\otimes \Q$ with $Q(X)>0$ we have
\[\sum_{\mu \in L'/L}\sum_{\substack{m\in \Z-Q(\mu)\\m>0}} m^{j+\frac{1}{2}}\lambda(\mu,m)\,\mathrm{tr}_{\mu,m}G_{2+2j}(P_X,\cdot)\in \sqrt{Q(X)}\Q.\]
\end{corollary}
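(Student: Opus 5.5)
The plan is to reduce the corollary to Theorem \ref{thm:algebraicity_traces} by building a weakly holomorphic form from the rational relation. Put $\kappa'=\kappa-2j=\frac{1-q}{2}-2j<0$, so that $2-\kappa'=2-\kappa+2j$. We also have $2\kappa'\equiv 1-q \pmod 4$, which is the parity condition required for the Maass--Poincar\'e series $F_{\mu,m,\kappa'}$ to be defined. Applying Proposition \ref{prop:rational_relations_define_weakly_holomorphic_forms} with weight $\kappa'$ to the rational relation $\{\lambda(\mu,m)\}$ for $S_{2-\kappa',L^-}$ yields
\[
f:=\sum_{\mu \in L'/L}\sum_{\substack{m\in \Z-Q(\mu)\\m>0}}\lambda(\mu,m)F_{\mu,m,\kappa'}\in M^!_{\kappa-2j,L},
\]
and this $f$ has rational Fourier coefficients. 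In particular $f$ has rational principal part.

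Next we read off the principal part. By \eqref{eq:fourier_expansion_rational_relation} together with $\lambda(-\mu,m)=\lambda(\mu,m)$, the coefficient is $a_f(\mu,-m)=2\lambda(\mu,m)$ for every $\mu$ and every $m>0$. Substituting this into the second assertion of Theorem \ref{thm:algebraicity_traces} gives
\[
2\sum_{\mu}\sum_{m>0} m^{j+\frac12}\lambda(\mu,m)\,\mathrm{tr}_{\mu,m}G_{2+2j}(P_X,\cdot)\in\sqrt{Q(X)}\,\Q .
\]
Dividing by $2$ preserves membership in $\sqrt{Q(X)}\,\Q$, which proves the corollary.

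The only delicate point is the bookkeeping of the principal part in the second step. The expansion $F_{\mu,m,\kappa'}=e(-m\tau)(\e_\mu+\e_{-\mu})+O(1)$ counts both $\mu$ and $-\mu$, and when $\mu=-\mu$ this contributes $2\e_\mu$ directly. In every case the resulting factor is a nonzero rational constant, so the rationality conclusion is unaffected. The sum over $(\mu,m)$ is finite by condition (2) of a rational relation, so no convergence issue arises.
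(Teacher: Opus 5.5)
Your proposal is correct and is essentially the paper's own proof: you build $f\in M^!_{\kappa-2j,L}$ from the relation via Proposition~\ref{prop:rational_relations_define_weakly_holomorphic_forms}, read off $a_f(\mu,-m)=2\lambda(\mu,m)$ from \eqref{eq:fourier_expansion_rational_relation}, and apply Theorem~\ref{thm:algebraicity_traces}. The only thing to tighten is your last paragraph: the division works because the factor is the same constant $2$ for every $\mu$, including $\mu=-\mu$, thanks to $\lambda(-\mu,m)=\lambda(\mu,m)$; a nonzero rational factor depending on $\mu$ would not be enough, but you already established the uniform factor $2$ in your second step.
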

\begin{proof}
Let $f\in M^!_{\kappa-2j,L}$ be the form constructed in Proposition \ref{prop:rational_relations_define_weakly_holomorphic_forms}. Then $f$ has rational principal part and $a_f(\mu,-m)=2\lambda(\mu,m)$ for all $\mu\in L'/L$ and all $m\in \Z-Q(\mu)$ with $m>0$ (see \eqref{eq:fourier_expansion_rational_relation}). The result now follows directly from Theorem \ref{thm:algebraicity_traces}.
\end{proof}

\section{Complementary results: theta splitting and cycle integrals of meromorphic modular forms}\label{sec:remarks}

% In this section we consider the even lattice
%     \begin{equation*}
%  L=\left\{X=\begin{pmatrix}
%     b & c \\ -a & -b
% \end{pmatrix}:a,b,c\in \Z\right\} \text{ with }Q(X)=ac-b^2
% \end{equation*}
% of signature $(1,2)$. Then $L=\Z \ell+\Z\ell'+K$ where
% $$\ell:=\left(\begin{matrix}
% 0 & 0 \\ -1 & 0
% \end{matrix}\right), \quad \ell':=\left(\begin{matrix}
% 0 & 1 \\ 0 & 0
% \end{matrix}\right)$$
% and $K=\left\{\left(\begin{smallmatrix}
%     b & 0 \\ 0 & -b
% \end{smallmatrix}\right):b\in \Z\right\}$ can be identified with $(\Z,Q(b)=-b^2)$. Moreover
% \begin{equation*}
%  L'=\left\{X=\begin{pmatrix}
%     b/2 & c \\ -a & -b/2
% \end{pmatrix}:a,b,c\in \Z\right\}
% \end{equation*}
% and $K'\simeq (\frac{1}{2}\Z,Q(b)=-b^2)$.

\subsection{Another evaluation of the theta lift at special points}\label{sec:theta_splitting}

In this section we give an alternative proof of Theorem \ref{thm:algebraicity_traces} and Corollary \ref{cor:algebraicity_traces} valid for any even lattice $L$ of signature $(1,q)$ without the hypothesis of splitting a hyperbolic plane over $\Z$. This alternative proof uses an evaluation of Males \cite{males} for the regularized theta lift $\Psi^j(f,z)$ at special points $z \in \Gr(L)$ in terms of Fourier coefficients of harmonic Maass forms. It also relies on results of Li and Schwagenscheidt \cite{LS22} on the existence of harmonic Maass forms with rational Fourier coefficients that are preimages under $\xi_{3/2}$ of unary theta series.

Recall from Definition \ref{def:special_point} that $z\in \Gr(L)$ is called a special point if it is defined over $\Q$, that is, if there exists a positive definite $\Q$-line $z_0\subseteq L \otimes \Q$ such that $z=z_0\otimes \R$. 
For such a special point % $z \in \Gr(L)$ its orthogonal complement $z^{\perp}$ in $V=L\otimes \R$ is also defined over $\Q$ and 
we obtain the rational splitting $L \otimes \Q = z_0 \oplus z_0^{\perp}$ which yields the positive definite lattice $P$ and the negative definite lattice $N$ given by
\[
P = L \cap z_0, \qquad N = L \cap z_0^{\perp}.
\]
Note that $P \oplus N$ is a sublattice of $L$ of finite index. The Siegel theta functions associated with $L$ and $P \oplus N$ are related by
\begin{align*}%\label{eq theta relation}
\Theta_{L} = \left(\Theta_{P\oplus N}\right)^{L},
\end{align*}
where the trace operator is as defined in Section~\ref{section:operatorsandrankin}. Moreover, identifying $\C[(P\oplus N)'/(P\oplus N)]$ with $\C[P'/P] \otimes \C[N'/N]$, we see that the Siegel theta function associated with $P \oplus N$ splits as a tensor product
\begin{align*}\label{splitting siegel theta}
\Theta_{P \oplus N}(\tau,z) = \Theta_{P}(\tau) \otimes \Theta_{N}(\tau),
\end{align*}
where
\begin{equation*}%\label{eq usual holomorphic theta}
    \Theta_{P}(\tau) := \sum_{\mu \in P'/P}\sum_{X \in P+\mu}e(Q(X)\tau)\e_\mu \in M_{1/2,P}
\end{equation*}
is the usual holomorphic (vector-valued) theta series associated with $P$, and
\[
\Theta_N(\tau) =  \Im(\tau)^{q/2}\overline{\Theta_{N^-}(\tau)}
\]
with the holomorphic theta series $\Theta_{N^-} \in M_{q/2,N^-}$, where $N^-=(N,-Q)$. 

Now let $f\in M^{!}_{\kappa-2j,L}$, let $z\in \Gr(L)$ be a special point and let $P,N$ be as above. Then by \cite[Theorem 3.3]{males} we have the evaluation
    \begin{align}\label{eq:evaluation_theta_lift_as_CT}
        \Psi^j(f,z)&= (4\pi)^j\frac{\pi^{\frac{1}{2}}\Gamma(1+j)}{2\Gamma\left(\frac{3}{2}+j\right) }\mathrm{CT}\left(\left\langle f_{P\oplus N}, \left[\tilde{\mathcal G}_P^+, \Theta_{N^-}\right]_j\right\rangle\right),
    \end{align}
where $\mathrm{CT}$ denotes the constant term of a Fourier series, $f_{P\oplus N}$ denotes the image of $f$ under the restriction map defined in Section \ref{section:operatorsandrankin}, and $\tilde{\mathcal G}_P\in H_{3/2,P^-}$ is any $\xi_{3/2}$-preimage of $\Theta_P$.

Since $P$ is a 1-dimensional sublattice of $L$, we can find $X_0\in L$ such that $P=\Z X_0$. Letting $N_0:=Q(X_0)\in \Z$, we have that the lattice $(P,Q)$ is isomorphic to $(\Z,x\mapsto N_0x^2)$. Following the notation in \cite{LS22}, the holomorphic theta series $\Theta_P$ can then be identified with the unary theta series
$$\theta_{N_0}(\tau;0)=\sum_{\mu \in \Z/2N_0\Z}\sum_{n\in 2N_0\Z+\mu}q^{\frac{n^2}{4N_0}}\e_\mu.$$
By \cite[Theorem 1.1(2)]{LS22} there exists a harmonic Maass form $\widetilde{\theta}_{N_0}(\tau;0)\in H_{\frac{3}{2},P^-}$ such that $\widetilde{\theta}^+_{N_0}(\tau;0)$ has rational Fourier coefficients and $\xi_{\frac{3}{2}}\left(\widetilde{\theta}_{N_0}(\tau;0)\right) =\frac{\sqrt{N_0}}{2\pi}\theta_{N_0}(\tau;0)$. It follows that choosing $\tilde{\mathcal G}_P$ as $\frac{2\pi}{\sqrt{N_0}}\widetilde{\theta}_{N_0}(\tau;0)$ gives a preimage of $\Theta_P$ such that
$\tilde{\mathcal G}_P^+$ has Fourier coefficients in $\pi \sqrt{Q(X_0)}\Q$. 

If $f$ has rational principal part, then by Lemma \ref{lemma:mcgrawextended} all the Fourier coefficients of $f$, and hence of $f_{P\oplus N}$, are rational. Since the Fourier coefficients of $\Theta_{N^-}$ are integers we get
\[
\mathrm{CT}\left(\left\langle f_{P\oplus N}, \left[\tilde{\mathcal G}_P^+, \Theta_{N^-}\right]_j\right\rangle\right)\in \pi \sqrt{Q(X_0)}\,\Q.
\]
Together with \eqref{eq:evaluation_theta_lift_as_CT} and the fact that $\Gamma\left(\frac{3}{2}+j\right)\in \sqrt{\pi}\,\Q$, this proves that $\frac{1}{(4\pi)^{j+1}}\Psi^j(f,z)\in \sqrt{Q(X_0)}\,\Q$. 

Finally, given $X\in L\otimes \Q$ with $Q(X)>0$ the special point $z=z_0\otimes \R$ with $z_0:=\Q X$ corresponds under $\Gr(L)\simeq \H^q$ to $P_X$. Then $P=\Z X_0=L\cap z_0=L\cap \Q X_0$ implies $\sqrt{Q(X_0)}\in \sqrt{Q(X)}\, \Q$ and therefore $\frac{1}{(4\pi)^{j+1}}\Psi^j(f,P_X)\in \sqrt{Q(X)}\,\Q$, as desired. This reproves Theorem \ref{thm:algebraicity_traces}, and, as a direct consequence, Corollary \ref{cor:algebraicity_traces}.

\subsection{Cycle integrals of meromorphic modular forms}\label{sec:cycle_integral_meromorphic_mod_forms}

In this section we let $L$ denote an even lattice of full rank in the quadratic space
\begin{equation*}
 V:=\left\{X=\begin{pmatrix}
    -b & -c \\ a & b
\end{pmatrix}:a,b,c\in \R\right\} \text{ with }Q(X)=\det(X)=ac-b^2
\end{equation*}
of signature $(1,2)$; see Example \ref{ex:V_signature_(1,2)}. Recall that every $g\in \SL_2(\R)$ acts as an isometry on $V$ via $X\mapsto gXg^{-1}$. This induces an isomorphism $\PSL_2(\R)\simeq \SO(V)^+$. Moreover, $\Gr(V)\simeq \H$ via $\left(\begin{smallmatrix}
    b & c \\ -a & -b
\end{smallmatrix}\right)\R \mapsto -\frac{b}{a}+i\frac{\sqrt{ac-b^2}}{|a|}$ and this identification is $\SL_2(\R)$-equivariant.

For each $X=\begin{pmatrix}
    -b & -c \\ a & b
\end{pmatrix}\in V$ we define $Q_X(x,y):=ax^2+2bxy+cy^2$.

Let $\Gamma$ be a finite-index subgroup of $\SO(L)^+$ acting trivially on $L'/L$. Then $\Gamma$ can be seen as a subgroup of $\PSL_2(\R)$ acting on $\Gr(V)\simeq \H$. Given $X_0\in V$ with $Q(X_0)>0$ and $j\in \N_0$ let $k:=2+2j$ and
$$f_{k,X_0}(\tau):=\frac{(4Q(X_0))^{\frac{k+1}{2}}}{\pi}\sum_{X\in \Gamma X_0}Q_X(\tau,1)^{-k}.$$
Then $f_{k,X_0}(\tau)$ is a weight $2k$ meromorphic modular form for $\Gamma$ with poles of order $k$ at the points
$$\tau_X:=-\frac{b}{a}+i\frac{\sqrt{Q(X)}}{|a|}\in \H \quad \text{ for each } X= \begin{pmatrix}b & c \\ -a & -b
\end{pmatrix}\in \Gamma X_0.$$

Rationality properties of the cycle integrals of the functions $f_{k,X_0}(\tau)$ have been studied by several authors (see, e.g., \cite{abs20, ABMS21, ls}). The purpose of this section is to recover such results from the rationality of the cycle integrals of the even higher Green's function on $\H$ (Corollary \ref{cor:algebraicity_traces} in the case $q=2$).

We start by observing that the function $f_{k,X_0}(\tau)$ is related to the automorphic Green's function as follows.

\begin{lemma}{\cite[Equation (5.8)]{BKvP19}}\label{lemma:bkvp}
For all even integers $k\geq 2$ we have
\[
R_0^k(G_k(\tau,\tau_{X_0}))=2^{k-3}(k-1)!\frac{|\Gamma_{X_0}|}{\sqrt{Q(X_0)}}f_{k,X_0}(\tau).
\]
\end{lemma}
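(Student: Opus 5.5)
We would follow the strategy of \cite{BKvP19}: reduce the identity to a statement about a single term of the Green's function, and then identify that term by holomorphy together with a symmetry argument. We start from $G_k(\tau,\tau_{X_0})=\frac{1}{2\pi}\sum_{\gamma\in\Gamma}Q_{k-1}(\cosh d(\tau,\gamma\tau_{X_0}))$. Since $k\geq 2$, the series and all its $\tau$-derivatives converge absolutely and locally uniformly away from $\Gamma\tau_{X_0}$, so $R_0^k$ may be applied termwise. Using the equivariance $\gamma\tau_{X_0}=\tau_{\gamma X_0}$ and that $\gamma$ and $\gamma\gamma'$ with $\gamma'\in\Gamma_{X_0}$ give the same point, we regroup:
\[
R_0^k G_k(\tau,\tau_{X_0})=\frac{|\Gamma_{X_0}|}{2\pi}\sum_{X\in\Gamma X_0}R_0^k\bigl(Q_{k-1}(\cosh d(\tau,\tau_X))\bigr).
\]
Comparing with the definition of $f_{k,X_0}$, it then suffices to prove the local identity
\[
R_0^k\bigl(Q_{k-1}(\cosh d(\tau,\tau_X))\bigr)=c_k\,Q(X)^{\frac{k}{2}}\,Q_X(\tau,1)^{-k}
\]
for every $X$ with $Q(X)>0$, with an explicit constant $c_k$ depending only on $k$. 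Here $Q(X)=Q(X_0)$ for all $X\in\Gamma X_0$.

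To prove the local identity we first note that both sides transform in the same way under $g\in\SL_2(\R)$. The left side has weight $2k$ in $\tau$ and satisfies $\tau_{gX}=g\tau_X$. The right side satisfies $Q_{gXg^{-1}}(g\tau,1)=(c\tau+d)^{-2}Q_X(\tau,1)$. Scaling $X$ changes neither $\tau_X$ nor the right side, because of the factor $Q(X)^{k/2}$. So we may assume $\tau_X=i$ and $Q_X(\tau,1)=\tau^2+1$.

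Set $h(\tau)=Q_{k-1}(\cosh d(\tau,i))$. It satisfies $\Delta_0h=k(1-k)h$ on $\H\setminus\{i\}$. The standard commutation relation $L_{2k}R_0^k=R_{0}^{k-1}(L_2R_0)+\cdots$ gives $L_{2k}R_0^kh=-\bigl(\prod_{m=0}^{k-1}(\Delta_0-m(1-m))\bigr)$-type expression. It reduces to a constant multiple of $R_0^{k-1}\bigl((\Delta_0-k(1-k))h\bigr)=0$, so $H:=R_0^kh$ is holomorphic on $\H\setminus\{i\}$.

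Next, $h$ is invariant under the stabilizer $\SO(2)$ of $i$. Hence $H|_{2k}\kappa_\theta=H$ for every rotation $\kappa_\theta$. Passing to the disc via $w=\frac{\tau-i}{\tau+i}$, the function $(\tau+i)^{2k}H(\tau)$ becomes a holomorphic function $g(w)$ on the punctured disc with $g(e^{2i\theta}w)=e^{2ik\theta}g(w)$. This forces $g(w)=\sum_n a_nw^n$ to have only terms with $n\equiv k$ in the appropriate sense, namely $g(w)=a\,w^{-k}$ plus possibly a $w^{k}$ term. The growth coming from the logarithmic singularity $h\sim-\frac12\log|\tau-i|^2$ bounds the pole order by $k$. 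The decay $h=O((\cosh d)^{-k})$ at the boundary kills the positive term. Therefore $H=a\,(\tau-i)^{-k}(\tau+i)^{-k}=a\,(\tau^2+1)^{-k}$.

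The remaining step, which we expect to be the main obstacle, is the exact determination of the constant $a$, equivalently $c_k$, followed by the bookkeeping leading to $2^{k-3}(k-1)!/\sqrt{Q(X_0)}$. We would compute $a$ from the leading singularity. Near $\tau=i$ we have $h=-\frac12\log|\tau-i|^2+(\text{smooth})$, and the top-order part of $R_0^k$ is $(2i\partial_\tau)^k$. This gives $R_0^kh\sim-\frac12(2i)^k(-1)^{k-1}(k-1)!\,(\tau-i)^{-k}$, and we would then match it with $a(2i)^{-k}(\tau-i)^{-k}$. One has to check that the lower-order terms of $R_0^k$ and the smooth part of $h$ cannot contribute to the coefficient of $(\tau-i)^{-k}$; they only produce milder singularities. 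Finally we would undo the normalization $Q(X)=1$, which introduces $Q(X_0)^{k/2}$. Combining this with the prefactors $\frac{1}{2\pi}$ and $\frac{(4Q(X_0))^{(k+1)/2}}{\pi}$ should yield exactly $2^{k-3}(k-1)!\,|\Gamma_{X_0}|\,Q(X_0)^{-1/2}$, using that $k$ is even to fix the signs.
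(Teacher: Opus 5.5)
The paper does not prove this lemma; it imports it from \cite[Equation (5.8)]{BKvP19}. Your argument is a correct, self-contained proof. You reduce to the local kernel identity
\[
R_0^k\bigl(Q_{k-1}(\cosh d(\tau,z))\bigr)=c_k\,\frac{\Im(z)^k}{(\tau-z)^k(\tau-\bar z)^k}.
\]
You then get holomorphy from $L_{2k}R_0^k=-R_0^{k-1}\bigl(\Delta_0+k(k-1)\bigr)$, which follows from $\Delta_\kappa=-L_{\kappa+2}R_\kappa-\kappa$ and $\Delta_{\kappa+2}R_\kappa=R_\kappa(\Delta_\kappa+\kappa)$. The shape follows from $\SO(2)$-equivariance, and $c_k$ from the logarithmic singularity. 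Compared with the citation, this shows the identity depends only on the point-pair kernel, so it holds for every $\Gamma$ as in Section~\ref{sec:cycle_integral_meromorphic_mod_forms}. It also checks the constant directly in the present normalizations ($g_k=\frac{1}{2\pi}Q_{k-1}$, $Q_X=ax^2+2bxy+cy^2$, the prefactor $(4Q(X_0))^{(k+1)/2}/\pi$), which the citation leaves the reader to match.

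Some details need fixing.
\begin{itemize}
\item \emph{Rotation step.} The sign is wrong. With $w=\frac{\tau-i}{\tau+i}$ one has $g(\zeta w)=\zeta^{-k}g(w)$ for all $|\zeta|=1$; for $H=(\tau^2+1)^{-k}$ one gets $g=w^{-k}$. Since this holds for every $\zeta$ on the circle, it forces $g=a\,w^{-k}$ exactly, not a congruence condition. No $w^{k}$ term can occur, so your growth and decay arguments are unnecessary. As stated, the decay argument would in any case need decay of $R_0^kh$, not of $h$.
\item \emph{Singularity at $i$.} Near $i$, $h$ is not ``logarithm plus smooth''. From $Q_{k-1}(t)=\frac12P_{k-1}(t)\log\frac{t+1}{t-1}-W_{k-2}(t)$, with $P_{k-1}$ the Legendre polynomial and $W_{k-2}$ a polynomial, one gets $h=-\frac12\log|\tau-i|^2+O(|\tau-i|^2\log|\tau-i|)+(\text{real-analytic})$. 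The middle term contributes only $O(|\tau-i|^{2-k}\log|\tau-i|)$ after applying $R_0^k$, so your matching is unaffected.
\item \emph{Regrouping.} You also need $\Gamma_{\tau_{X_0}}=\Gamma_{X_0}$. This holds because $\SO(V)^+$ preserves each sheet of $\{X\in V:Q(X)=Q(X_0)\}$ and so cannot send $X_0$ to $-X_0$.
\end{itemize}

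With these fixes, the constant you were worried about comes out exactly. The matching gives $c_k=2^{2k-1}(k-1)!$, consistent with $R_0Q_0(\cosh d(\tau,i))=2(\tau^2+1)^{-1}$ and $R_0^2Q_1(\cosh d(\tau,i))=8(\tau^2+1)^{-2}$. Then
\[
\frac{|\Gamma_{X_0}|}{2\pi}\,c_k\,Q(X_0)^{\frac k2}\cdot\frac{\pi}{2^{k+1}Q(X_0)^{\frac{k+1}{2}}}=2^{k-3}(k-1)!\,\frac{|\Gamma_{X_0}|}{\sqrt{Q(X_0)}}.
\]
Evenness of $k$ enters only through $\Im(\tau_X)^k(\tau-\tau_X)^{-k}(\tau-\bar\tau_X)^{-k}=Q(X)^{k/2}Q_X(\tau,1)^{-k}$.
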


Let $Y=\begin{pmatrix}
    -b & -c \\ a & b
\end{pmatrix}\in V$ with $Q(Y)<0$. If $a\neq 0$ the geodesic $S_Y$ is a semi-circle that we take with counter-clockwise orientation if $a>0$ and clockwise orientation if $a<0$. If $a=0$, then $S_Y$ is the vertical line $\Re(\tau)=-\frac{c}{2b}$ that we take with south-to-north orientation if $b>0$ and north-to-south orientation if $b<0$. 

For each $\mu \in L'/L$ and each $m\in \Z-Q(\mu)$ with $ m>0$ and such that $f_{k,X_0}(\tau)$ has no poles on $H(\mu,-m)=\bigcup_{\substack{Y\in L+\mu\\ Q(Y)=-m }}S_Y$, we define
\begin{equation}\label{eq:traces_weight_k}
    \tr_{\mu,m}f_{k,X_0}:=\sum_{\substack{[Y]\in \Gamma \backslash (L+\mu)\\Q(Y)=-m}}\int_{\Gamma_Y\backslash S_Y}f_{k,X_0}(\tau)\left(\frac{2\sqrt{m}}{Q_Y(\tau,1)}\right)^{1-k}\d\tau.
\end{equation}

\begin{remark}
   For every $\Gamma_Y$-invariant function $f:S_Y\to \C$ that is continuous almost everywhere  one has
   $$\int_{\Gamma_Y\backslash S_Y}f(\tau)\frac{2\sqrt{m}}{Q_Y(\tau,1)}\d\tau= \int_{\Gamma_Y\backslash S_Y}f(\tau)\d\sigma(\tau),$$
   hence the definition \eqref{eq:traces_weight_k} is consistent with \eqref{eq:def_traces_mu,m_Greens}.
\end{remark}

Recalling that $\Delta G_k(\cdot,\tau_{X_0})=k(1-k)G_k(\cdot,\tau_{X_0})$, we can use  \cite[Theorem 1.1]{AS20} to relate, for every integer $r\geq 2$, the cycle integrals of $R_0^r(G_k(\cdot,\tau_{X_0}))$ to those of $R_0^{r-2}G_k(\cdot,\tau_{X_0})$, as follows.

\begin{lemma}\label{lem:cycle_recursion_Gk}
Let $X_0,Y\in V$ with $Q(X_0)>0$ and $Q(Y)=-m<0$, and $r\geq 2$ an integer. Assume that $\Gamma \tau_{X_0}\cap  S_Y=\emptyset$. Then, we have
\begin{align*}
 &   \int_{\Gamma_Y\backslash S_Y}R_0^{r}G_k(\tau,\tau_{X_0}) \left(\frac{2\sqrt{m}}{Q_Y(\tau,1)}\right)^{1-r}\d\tau
\\
&\qquad  =(k(k-1)-(r-1)(r-2))
 \int_{\Gamma_Y\backslash S_Y}R_0^{r-2}G_k(\tau,\tau_{X_0}) \left(\frac{2\sqrt{m}}{Q_Y(\tau,1)}\right)^{3-r}\d\tau.
\end{align*}
\end{lemma}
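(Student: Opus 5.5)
The plan is to derive the recursion directly from \cite[Theorem 1.1]{AS20}, applied to the function $F(\tau):=G_k(\tau,\tau_{X_0})$. This $F$ is $\Gamma$-invariant and satisfies $\Delta F=k(1-k)F$ away from $\Gamma\tau_{X_0}$. Since $\Gamma\tau_{X_0}\cap S_Y=\emptyset$ and the singular set is discrete, $F$ is real-analytic on a $\Gamma_Y$-invariant tubular neighbourhood of $S_Y$. So every cycle integral in the statement is defined, and $F$ is an eigenfunction of the Laplacian near the cycle, which is all that \cite{AS20} requires. The recursion to be proved is, in essence, the statement of that theorem: for a weight $0$ eigenfunction with eigenvalue $\lambda$, the weight $2r$ cycle integrals of $R_0^rF$ against $Q_Y(\tau,1)^{r-1}$ equal a factor times the weight $2r-4$ cycle integral of $R_0^{r-2}F$ against $Q_Y(\tau,1)^{r-3}$.

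First I would fix normalizations. In \cite{AS20} the cycle integral is written with $Q_Y(\tau,1)^{r-1}\d\tau$. Here the weight is $\left(\frac{2\sqrt{m}}{Q_Y(\tau,1)}\right)^{1-r}=(2\sqrt m)^{1-r}Q_Y(\tau,1)^{r-1}$. The powers of $2\sqrt m$ on the two sides are $(2\sqrt m)^{1-r}$ and $(2\sqrt m)^{3-r}$, so they differ by $(2\sqrt m)^{-2}=1/(4m)$. I expect \cite{AS20} to produce a factor proportional to the discriminant $-4Q(Y)=4m$ (or $D=4m$ in binary-form normalization), which cancels this $1/(4m)$ exactly. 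I would check the sign conventions for $Q_Y$ and the orientation of $S_Y$ chosen above, which are consistent for $Y$ and $-Y$.

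Second, I would identify the eigenvalue factor. One route is the standard identity $L_{2r}R_0^{r}F=\big(\lambda'_{r}\big)R_0^{r-1}F$ with $-\Delta_{2r-2}$-eigenvalues. Using $\Delta_0=-L_2R_0$ and $R$ intertwining $\Delta_{2\ell}$ with shifted eigenvalues, one gets $-L_{2r}R_{2r-2}(R_0^{r-1}F)=\big(k(1-k)+(r-1)(r-2)\big)R_0^{r-1}F$ up to sign. The other route is to write $R_0^rF$, restricted to the geodesic, as a derivative along $S_Y$ plus a lower-order term. Integrating by parts over the closed cycle $\Gamma_Y\backslash S_Y$ then kills the boundary terms by $\Gamma_Y$-invariance, and for $q=2$ these quotients are compact or of finite length in the setting needed. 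The resulting coefficient is $k(k-1)-(r-1)(r-2)$, matching the statement.

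The main obstacle is the bookkeeping of constants and signs when matching \cite[Theorem 1.1]{AS20}, particularly the factor $4m$ and the orientation convention, to the normalization with $\frac{2\sqrt m}{Q_Y(\tau,1)}$. If I did the computation by hand instead, the key step would be to check that on $S_Y$ the derivative $\frac{\d}{\d\tau}\big(Q_Y(\tau,1)^{r-2}R_0^{r-1}F\big)$ reproduces $Q_Y^{r-1}R_0^rF$ modulo a multiple of $Q_Y^{r-3}R_0^{r-2}F$. This uses $Q_Y'(\tau,1)^2-2Q_YQ_Y''=4m$ together with the eigen-relation above.
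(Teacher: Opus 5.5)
Your proposal follows the paper, which obtains this lemma simply by applying \cite[Theorem 1.1]{AS20} to $G_k(\cdot,\tau_{X_0})$ with $\Delta G_k(\cdot,\tau_{X_0})=k(1-k)G_k(\cdot,\tau_{X_0})$; your check that the discriminant $4m$ absorbs the ratio $(2\sqrt m)^{1-r}/(2\sqrt m)^{3-r}=1/(4m)$ is exactly the normalization needed. If you do carry out the by-hand version, fix two slips: first, the relation that enters is $L_{2r-2}R_0^{r-1}F=(k(k-1)-(r-1)(r-2))R_0^{r-2}F$ (in fact $-L_{2r}R_{2r-2}R_0^{r-1}F=(k(1-k)+r(r-1))R_0^{r-1}F$, not what you wrote); second, the exact term is the derivative along $S_Y$ of the $\Gamma_Y$-invariant weight-$0$ function $Q_Y(\tau,1)^{r-1}R_0^{r-1}F$ (not $Q_Y^{r-2}R_0^{r-1}F$), namely $Q_Y^{r-1}R_0^{r}F\,\d\tau=2i\,\d(Q_Y^{r-1}R_0^{r-1}F)+4m(k(k-1)-(r-1)(r-2))\,Q_Y^{r-3}R_0^{r-2}F\,\d\tau$ on $S_Y$.
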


For $k=2+2j$ with $j\in \N_0$, using Lemma \ref{lem:cycle_recursion_Gk} several times gives
\begin{align*}
 \int_{\Gamma_Y\backslash S_Y}R_0^{k}G_k(\tau,\tau_{X_0}) \left(\frac{2\sqrt{m}}{Q_Y(\tau,1)}\right)^{1-k}\d\tau
= \frac{2( (2j+1)!)^2}{(j!)^2}
 \int_{\Gamma_Y\backslash S_Y}G_k(\tau,\tau_{X_0}) \d\sigma(\tau)
\end{align*}
Combined with Lemma \ref{lemma:bkvp} this leads to the identity
\[\int_{\Gamma_Y\backslash S_Y}f_{k,X_0}(\tau)\left(\frac{2\sqrt{m}}{Q_Y(\tau,1)}\right)^{1-k}\d\tau=
\frac{(2j+1)! \sqrt{Q(X_0)}}{2^{k-4}(j!)^2 |\Gamma_{X_0}|} \int_{\Gamma_Y\backslash S_Y}G_{2+2j}(\tau,\tau_{X_0}) \d\sigma(\tau)\]
and therefore
\[\tr_{\mu,m}f_{k,X_0}=\frac{(2j+1)! \sqrt{Q(X_0)}}{2^{k-4}(j!)^2 |\Gamma_{X_0}|}\,\tr_{\mu,m}(G_{2+2j}(\cdot,\tau_{X_0})).\]
Finally, as a direct consequence of Corollary \ref{cor:algebraicity_traces} we obtain the following rationality result: Let $\{\lambda(\mu,m)\}$ be a rational relation for $S_{\frac{1}{2}+k,L^-}$ and  $X_0\in L\otimes \Q$ with $Q(X_0)>0$. Assume that $\tau_{X_0}\not \in H(\mu,m)$ for all $\mu\in L'/L$ and $m\in \Z- Q(\mu)$ with $\lambda(\mu,m)\neq 0$. Then, we have
\[\sum_{\mu \in L'/L}\sum_{\substack{m\in \Z-Q(\mu)\\m>0}} m^{\frac{k-1}{2}}\lambda(\mu,m)\,\mathrm{tr}_{\mu,m}f_{k,X_0} \in \Q.\]

% Note that this argument does not work for odd $k$ since we cannot reduce to $R_0^0 G_k$. 

\bibliographystyle{alpha}
\bibliography{references} 

@misc{Mel08,
      title={{H}igher {G}reen's functions for modular forms}, 
      author={Anton Mellit},
      year={2008},
      note   =  {arXiv:0804.3184},
      eprint={0804.3184},
      archivePrefix={arXiv},
      primaryClass={math.NT},
      url={https://arxiv.org/abs/0804.3184}, 
}

@misc{Via11,
  author        = {Viazovska, Maryna},
  title         = {{CM} {V}alues of {H}igher {G}reen's {F}unctions},
  year          = {2011},
  note   =  {arXiv:1110.4654},
  eprint        = {1110.4654},
  archivePrefix = {arXiv},
  primaryClass  = {math.NT},
  doi           = {10.48550/arXiv.1110.4654},
  url           = {https://arxiv.org/abs/1110.4654}
}

@article {Via19,
    AUTHOR = {Viazovska, Maryna},
     TITLE = {Petersson inner products of weight-one modular forms},
   JOURNAL = {J. Reine Angew. Math.},
  FJOURNAL = {Journal f\"ur die Reine und Angewandte Mathematik. [Crelle's
              Journal]},
    VOLUME = {749},
      YEAR = {2019},
     PAGES = {133--159},
      ISSN = {0075-4102,1435-5345},
   MRCLASS = {11F27 (11G50 11N75 11R27)},
  MRNUMBER = {3935901},
MRREVIEWER = {Shaul\ Zemel},
       DOI = {10.1515/crelle-2016-0042},
       URL = {https://doi.org/10.1515/crelle-2016-0042},
}

@article {Li23,
    AUTHOR = {Li, Yingkun},
     TITLE = {Algebraicity of higher {G}reen functions at a {CM} point},
   JOURNAL = {Invent. Math.},
  FJOURNAL = {Inventiones Mathematicae},
    VOLUME = {234},
      YEAR = {2023},
    NUMBER = {1},
     PAGES = {375--418},
      ISSN = {0020-9910,1432-1297},
   MRCLASS = {11F41 (11G15)},
  MRNUMBER = {4635835},
MRREVIEWER = {Seewoo\ Lee},
       DOI = {10.1007/s00222-023-01205-5},
       URL = {https://doi.org/10.1007/s00222-023-01205-5},
}

@article {Li22,
    AUTHOR = {Li, Yingkun},
     TITLE = {Average {CM}-values of higher {G}reen's function and
              factorization},
   JOURNAL = {Amer. J. Math.},
  FJOURNAL = {American Journal of Mathematics},
    VOLUME = {144},
      YEAR = {2022},
    NUMBER = {5},
     PAGES = {1241--1298},
      ISSN = {0002-9327,1080-6377},
   MRCLASS = {11F03 (11G15 11G18 11G40)},
  MRNUMBER = {4494182},
MRREVIEWER = {Rolf\ Berndt},
       DOI = {10.1353/ajm.2022.0029},
       URL = {https://doi.org/10.1353/ajm.2022.0029},
}

@article {BLY25,
    AUTHOR = {Bruinier, Jan H. and Li, Yingkun and Yang, Tonghai},
     TITLE = {Deformations of theta integrals and a conjecture of
              {G}ross-{Z}agier},
   JOURNAL = {Forum Math. Sigma},
  FJOURNAL = {Forum of Mathematics. Sigma},
    VOLUME = {13},
      YEAR = {2025},
     PAGES = {Paper No. e54, 59},
      ISSN = {2050-5094},
   MRCLASS = {11F37 (11F41 11G15)},
  MRNUMBER = {4878630},
MRREVIEWER = {Wei\ Xiong},
       DOI = {10.1017/fms.2024.139},
       URL = {https://doi.org/10.1017/fms.2024.139},
}

@article {BKvP19,
    AUTHOR = {Bringmann, Kathrin and Kane, Ben and von Pippich, Anna-Maria},
     TITLE = {Regularized inner products of meromorphic modular forms and
              higher {G}reen's functions},
   JOURNAL = {Commun. Contemp. Math.},
  FJOURNAL = {Communications in Contemporary Mathematics},
    VOLUME = {21},
      YEAR = {2019},
    NUMBER = {5},
     PAGES = {1850029, 35},
      ISSN = {0219-1997,1793-6683},
   MRCLASS = {11F37 (11F11)},
  MRNUMBER = {3980689},
MRREVIEWER = {Paul\ M.\ Jenkins},
       DOI = {10.1142/S0219199718500293},
       URL = {https://doi.org/10.1142/S0219199718500293},
}

@article {Mat99,
    AUTHOR = {Matthes, Roland},
     TITLE = {On some {P}oincar\'e-series on hyperbolic space},
   JOURNAL = {Forum Math.},
  FJOURNAL = {Forum Mathematicum},
    VOLUME = {11},
      YEAR = {1999},
    NUMBER = {4},
     PAGES = {483--502},
      ISSN = {0933-7741,1435-5337},
   MRCLASS = {11F55 (11F30 11F37)},
  MRNUMBER = {1699170},
MRREVIEWER = {Hiroshi\ Saito},
       DOI = {10.1515/form.1999.010},
       URL = {https://doi.org/10.1515/form.1999.010},
}

@book {AAR99,
    AUTHOR = {Andrews, George E. and Askey, Richard and Roy, Ranjan},
     TITLE = {Special functions},
    SERIES = {Encyclopedia of Mathematics and its Applications},
    VOLUME = {71},
 PUBLISHER = {Cambridge University Press, Cambridge},
      YEAR = {1999},
     PAGES = {xvi+664},
      ISBN = {0-521-62321-9; 0-521-78988-5},
   MRCLASS = {33-01 (33-02)},
  MRNUMBER = {1688958},
MRREVIEWER = {Bruce\ C.\ Berndt},
       DOI = {10.1017/CBO9781107325937},
       URL = {https://doi.org/10.1017/CBO9781107325937},
}

@book {Kne02,
    AUTHOR = {Kneser, Martin},
     TITLE = {Quadratische {F}ormen},
      NOTE = {Revised and edited in collaboration with Rudolf Scharlau},
 PUBLISHER = {Springer-Verlag, Berlin},
      YEAR = {2002},
     PAGES = {viii+164},
      ISBN = {3-540-64650-7},
   MRCLASS = {11-02 (11Exx 11H55)},
  MRNUMBER = {2788987},
       DOI = {10.1007/978-3-642-56380-5},
       URL = {https://doi.org/10.1007/978-3-642-56380-5},
}

@book {EGM98,
    AUTHOR = {Elstrodt, J. and Grunewald, F. and Mennicke, J.},
     TITLE = {Groups acting on hyperbolic space},
    SERIES = {Springer Monographs in Mathematics},
      NOTE = {Harmonic analysis and number theory},
 PUBLISHER = {Springer-Verlag, Berlin},
      YEAR = {1998},
     PAGES = {xvi+524},
      ISBN = {3-540-62745-6},
   MRCLASS = {11F72 (11F12 11M36 22E40 57M50 57S30)},
  MRNUMBER = {1483315},
MRREVIEWER = {Stefan\ K\"uhnlein},
       DOI = {10.1007/978-3-662-03626-6},
       URL = {https://doi.org/10.1007/978-3-662-03626-6},
}

@book {BFOR17,
    AUTHOR = {Bringmann, Kathrin and Folsom, Amanda and Ono, Ken and Rolen,
              Larry},
     TITLE = {Harmonic {M}aass forms and mock modular forms: theory and
              applications},
    SERIES = {American Mathematical Society Colloquium Publications},
    VOLUME = {64},
 PUBLISHER = {American Mathematical Society, Providence, RI},
      YEAR = {2017},
     PAGES = {xv+391},
      ISBN = {978-1-4704-1944-8},
   MRCLASS = {11F03 (11F11 11F27 11F30 11F37 11F50 11P81 11P82)},
  MRNUMBER = {3729259},
MRREVIEWER = {Jeremy\ Lovejoy},
       DOI = {10.1090/coll/064},
       URL = {https://doi.org/10.1090/coll/064},
}

@article {HIPS25,
    AUTHOR = {Herrero, S. and Imamo\=glu, \"O. and von Pippich, A.-M. and
              Schwagenscheidt, M.},
     TITLE = {Special values of {G}reen's functions on hyperbolic 3-space},
   JOURNAL = {Duke Math. J.},
  FJOURNAL = {Duke Mathematical Journal},
    VOLUME = {174},
      YEAR = {2025},
    NUMBER = {15},
     PAGES = {3245--3316},
}

@article {AS20,
    AUTHOR = {Alfes-Neumann, Claudia and Schwagenscheidt, Markus},
     TITLE = {Identities of cycle integrals of weak {M}aass forms},
   JOURNAL = {Ramanujan J.},
  FJOURNAL = {Ramanujan Journal. An International Journal Devoted to the
              Areas of Mathematics Influenced by Ramanujan},
    VOLUME = {52},
      YEAR = {2020},
    NUMBER = {3},
     PAGES = {683--688},
      ISSN = {1382-4090,1572-9303},
   MRCLASS = {11F37 (11F11 11F25 11F67)},
  MRNUMBER = {4122768},
MRREVIEWER = {Soumya\ Das},
       DOI = {10.1007/s11139-019-00143-2},
       URL = {https://doi.org/10.1007/s11139-019-00143-2},
}

@article {BS21,
    AUTHOR = {Bruinier, Jan Hendrik and Schwagenscheidt, Markus},
     TITLE = {Theta lifts for {L}orentzian lattices and coefficients of mock
              theta functions},
   JOURNAL = {Math. Z.},
  FJOURNAL = {Mathematische Zeitschrift},
    VOLUME = {297},
      YEAR = {2021},
    NUMBER = {3-4},
     PAGES = {1633--1657},
      ISSN = {0025-5874,1432-1823},
   MRCLASS = {11F27},
  MRNUMBER = {4229617},
MRREVIEWER = {Wei\ Xiong},
       DOI = {10.1007/s00209-020-02572-y},
       URL = {https://doi.org/10.1007/s00209-020-02572-y},
}

@article {McG03,
    AUTHOR = {McGraw, William J.},
     TITLE = {The rationality of vector valued modular forms associated with
              the {W}eil representation},
   JOURNAL = {Math. Ann.},
  FJOURNAL = {Mathematische Annalen},
    VOLUME = {326},
      YEAR = {2003},
    NUMBER = {1},
     PAGES = {105--122},
      ISSN = {0025-5831,1432-1807},
   MRCLASS = {11F30 (11F27)},
  MRNUMBER = {1981614},
MRREVIEWER = {Jan\ Hendrik\ Bruinier},
       DOI = {10.1007/s00208-003-0413-1},
       URL = {https://doi.org/10.1007/s00208-003-0413-1},
}

@article {LS22,
    AUTHOR = {Li, Yingkun and Schwagenscheidt, Markus},
     TITLE = {Mock modular forms with integral {F}ourier coefficients},
   JOURNAL = {Adv. Math.},
  FJOURNAL = {Advances in Mathematics},
    VOLUME = {399},
      YEAR = {2022},
     PAGES = {Paper No. 108264, 30},
      ISSN = {0001-8708,1090-2082},
   MRCLASS = {11F27 (11F30 11F37)},
  MRNUMBER = {4384613},
MRREVIEWER = {Larry\ Rolen},
       DOI = {10.1016/j.aim.2022.108264},
       URL = {https://doi.org/10.1016/j.aim.2022.108264},
}

@article {borcherds,
    AUTHOR = {Borcherds, Richard E.},
     TITLE = {Automorphic forms with singularities on {G}rassmannians},
   JOURNAL = {Invent. Math.},
  FJOURNAL = {Inventiones Mathematicae},
    VOLUME = {132},
      YEAR = {1998},
    NUMBER = {3},
     PAGES = {491--562},
      ISSN = {0020-9910,1432-1297},
   MRCLASS = {11F37 (11F22 14J28 17B67 57R57)},
  MRNUMBER = {1625724},
MRREVIEWER = {I.\ Dolgachev},
       DOI = {10.1007/s002220050232},
       URL = {https://doi.org/10.1007/s002220050232},
}

@article {ABMS21,
    AUTHOR = {Alfes-Neumann, Claudia and Bringmann, Kathrin and Males, J. and
              Schwagenscheidt, M.},
     TITLE = {Cycle integrals of meromorphic modular forms and coefficients
              of harmonic {M}aass forms},
   JOURNAL = {J. Math. Anal. Appl.},
  FJOURNAL = {Journal of Mathematical Analysis and Applications},
    VOLUME = {497},
      YEAR = {2021},
    NUMBER = {2},
     PAGES = {Paper No. 124898, 15},
      ISSN = {0022-247X,1096-0813},
   MRCLASS = {11F37 (11F27 11F67)},
  MRNUMBER = {4196572},
MRREVIEWER = {Larry\ Rolen},
       DOI = {10.1016/j.jmaa.2020.124898},
       URL = {https://doi.org/10.1016/j.jmaa.2020.124898},
}

@article {abs20,
    AUTHOR = {Alfes-Neumann, Claudia and Bringmann, Kathrin and
              Schwagenscheidt, Markus},
     TITLE = {On the rationality of cycle integrals of meromorphic modular
              forms},
   JOURNAL = {Math. Ann.},
  FJOURNAL = {Mathematische Annalen},
    VOLUME = {376},
      YEAR = {2020},
    NUMBER = {1-2},
     PAGES = {243--266},
      ISSN = {0025-5831,1432-1807},
   MRCLASS = {11F27 (11F37 30D30)},
  MRNUMBER = {4055161},
MRREVIEWER = {Jeremy\ Lovejoy},
       DOI = {10.1007/s00208-019-01914-4},
       URL = {https://doi.org/10.1007/s00208-019-01914-4},
}

@article {ls,
    AUTHOR = {L\"obrich, Steffen and Schwagenscheidt, Markus},
     TITLE = {Meromorphic modular forms with rational cycle integrals},
   JOURNAL = {Int. Math. Res. Not. IMRN},
  FJOURNAL = {International Mathematics Research Notices. IMRN},
      YEAR = {2022},
    NUMBER = {1},
     PAGES = {312--342},
      ISSN = {1073-7928,1687-0247},
   MRCLASS = {11F67 (11F12)},
  MRNUMBER = {4366018},
MRREVIEWER = {Seokho\ Jin},
       DOI = {10.1093/imrn/rnaa104},
       URL = {https://doi.org/10.1093/imrn/rnaa104},
}

@article {BEY21,
    AUTHOR = {Bruinier, Jan Hendrik and Ehlen, Stephan and Yang, Tonghai},
     TITLE = {C{M} values of higher automorphic {G}reen functions for
              orthogonal groups},
   JOURNAL = {Invent. Math.},
  FJOURNAL = {Inventiones Mathematicae},
    VOLUME = {225},
      YEAR = {2021},
    NUMBER = {3},
     PAGES = {693--785},
      ISSN = {0020-9910,1432-1297},
   MRCLASS = {11G18 (11F37 11G15)},
  MRNUMBER = {4296349},
MRREVIEWER = {\'Oscar\ Rivero},
       DOI = {10.1007/s00222-021-01038-0},
       URL = {https://doi.org/10.1007/s00222-021-01038-0},
}

@book {EMT81,
    AUTHOR = {Erd\'elyi, Arthur and Magnus, Wilhelm and Oberhettinger, Fritz
              and Tricomi, Francesco G.},
     TITLE = {Higher transcendental functions. {V}ol. {I}},
      NOTE = {Based on notes left by Harry Bateman,
              With a preface by Mina Rees,
              With a foreword by E. C. Watson,
              Reprint of the 1953 original},
 PUBLISHER = {Robert E. Krieger Publishing Co., Inc., Melbourne, FL},
      YEAR = {1981},
     PAGES = {xiii+302},
      ISBN = {0-89874-069-X},
   MRCLASS = {33-02 (01A75)},
  MRNUMBER = {698779},
}

@book {MOS,
    AUTHOR = {Magnus, Wilhelm and Oberhettinger, Fritz and Soni, Raj Pal},
     TITLE = {Formulas and theorems for the special functions of
              mathematical physics},
    SERIES = {Die Grundlehren der mathematischen Wissenschaften},
    VOLUME = {Band 52},
   EDITION = {enlarged},
 PUBLISHER = {Springer-Verlag New York, Inc., New York},
      YEAR = {1966},
     PAGES = {viii+508},
   MRCLASS = {33.00},
  MRNUMBER = {232968},
MRREVIEWER = {Mary\ L.\ Boas},
}

@book {GR07,
    AUTHOR = {Gradshteyn, I. S. and Ryzhik, I. M.},
     TITLE = {Table of integrals, series, and products},
   EDITION = {Seventh},
      NOTE = {Translated from the Russian,
              Translation edited and with a preface by Alan Jeffrey and
              Daniel Zwillinger,
              With one CD-ROM (Windows, Macintosh and UNIX)},
 PUBLISHER = {Elsevier/Academic Press, Amsterdam},
      YEAR = {2007},
     PAGES = {xlviii+1171},
      ISBN = {978-0-12-373637-6; 0-12-373637-4},
   MRCLASS = {00A22 (33-00 65-00 65A05)},
  MRNUMBER = {2360010},
}

@article {GKZ87,
    AUTHOR = {Gross, B. and Kohnen, W. and Zagier, D.},
     TITLE = {Heegner points and derivatives of {$L$}-series. {II}},
   JOURNAL = {Math. Ann.},
  FJOURNAL = {Mathematische Annalen},
    VOLUME = {278},
      YEAR = {1987},
    NUMBER = {1-4},
     PAGES = {497--562},
      ISSN = {0025-5831,1432-1807},
   MRCLASS = {11G40 (11F30 11G05 11R29 14G10)},
  MRNUMBER = {909238},
MRREVIEWER = {Lawrence\ Washington},
       DOI = {10.1007/BF01458081},
       URL = {https://doi.org/10.1007/BF01458081},
}

@article {GZ86,
    AUTHOR = {Gross, Benedict H. and Zagier, Don B.},
     TITLE = {Heegner points and derivatives of {$L$}-series},
   JOURNAL = {Invent. Math.},
  FJOURNAL = {Inventiones Mathematicae},
    VOLUME = {84},
      YEAR = {1986},
    NUMBER = {2},
     PAGES = {225--320},
      ISSN = {0020-9910,1432-1297},
   MRCLASS = {11G40 (11F11 11G05 14G10)},
  MRNUMBER = {833192},
MRREVIEWER = {Loren\ D.\ Olson},
       DOI = {10.1007/BF01388809},
       URL = {https://doi.org/10.1007/BF01388809},
}

@article {GZ85,
    AUTHOR = {Gross, Benedict H. and Zagier, Don B.},
     TITLE = {On singular moduli},
   JOURNAL = {J. Reine Angew. Math.},
  FJOURNAL = {Journal f\"{u}r die Reine und Angewandte Mathematik. [Crelle's
              Journal]},
    VOLUME = {355},
      YEAR = {1985},
     PAGES = {191--220},
      ISSN = {0075-4102},
   MRCLASS = {11F03 (11G05 14K15)},
  MRNUMBER = {772491},
MRREVIEWER = {R. W. K. Odoni},
}

@book{brhabil,
	Address = {Berlin},
	Author = {Bruinier, J. H.},
	Isbn = {3-540-43320-1},
	Mrclass = {11F55 (11F23 11F27 11G18)},
	Mrreviewer = {Rainer Schulze-Pillot},
	Pages = {viii+152},
	Publisher = {Springer-Verlag},
	Series = {Lecture Notes in Mathematics},
	Title = {Borcherds products on {O}(2, {$l$}) and {C}hern classes of {H}eegner divisors},
	Volume = {1780},
	Year = {2002}}

@Article{	  brfu04,
  author	= {Bruinier, Jan H. and Funke, Jens},
  title		= {On two geometric theta lifts},
  journal	= {Duke Math. J.},
  fjournal	= {Duke Mathematical Journal},
  volume	= {125},
  year		= {2004},
  number	= {1},
  pages		= {45--90},
  issn		= {0012-7094},
  coden		= {DUMJAO},
  mrclass	= {11F55 (11F27 11F32)},
  mrreviewer	= {Matthew G. Boylan},
  doi		= {10.1215/S0012-7094-04-12513-8},
  url		= {http://dx.doi.org/10.1215/S0012-7094-04-12513-8}
}

@article {BY09,
    AUTHOR = {Bruinier, Jan Hendrik and Yang, Tonghai},
     TITLE = {Faltings heights of {CM} cycles and derivatives of
              {$L$}-functions},
   JOURNAL = {Invent. Math.},
  FJOURNAL = {Inventiones Mathematicae},
    VOLUME = {177},
      YEAR = {2009},
    NUMBER = {3},
     PAGES = {631--681},
      ISSN = {0020-9910,1432-1297},
   MRCLASS = {11G18 (11F67 11G40 14G35 14G40)},
  MRNUMBER = {2534103},
MRREVIEWER = {Robin\ de Jong},
       DOI = {10.1007/s00222-009-0192-8},
       URL = {https://doi.org/10.1007/s00222-009-0192-8},
}

@book {abramowitz1988handbook,
    AUTHOR = {Abramowitz, Milton and Stegun, Irene A.},
     TITLE = {Handbook of mathematical functions with formulas, graphs, and
              mathematical tables},
    SERIES = {National Bureau of Standards Applied Mathematics Series},
    VOLUME = {No. 55},
      NOTE = {For sale by the Superintendent of Documents},
 PUBLISHER = {U. S. Government Printing Office, Washington, DC},
      YEAR = {1964},
     PAGES = {xiv+1046},
   MRCLASS = {33.00 (65.05)},
  MRNUMBER = {167642},
MRREVIEWER = {D.\ H.\ Lehmer},
}

@article {males,
    AUTHOR = {Males, Joshua},
     TITLE = {Higher {S}iegel theta lifts on {L}orentzian lattices, harmonic
              {M}aass forms, and {E}ichler-{S}elberg type relations},
   JOURNAL = {Math. Z.},
  FJOURNAL = {Mathematische Zeitschrift},
    VOLUME = {301},
      YEAR = {2022},
    NUMBER = {4},
     PAGES = {3555--3569},
      ISSN = {0025-5874,1432-1823},
   MRCLASS = {11F27 (11F70 32N10)},
  MRNUMBER = {4449720},
MRREVIEWER = {Larry\ Rolen},
       DOI = {10.1007/s00209-022-03023-6},
       URL = {https://doi.org/10.1007/s00209-022-03023-6},
}

@article {borthwick-marzuola,
    AUTHOR = {Borthwick, David and Marzuola, Jeremy L.},
     TITLE = {Dispersive estimates for scalar and matrix {S}chr\"{o}dinger
              operators on {$\Bbb{H}^{n+1}$}},
   JOURNAL = {Math. Phys. Anal. Geom.},
  FJOURNAL = {Mathematical Physics, Analysis and Geometry. An International
              Journal Devoted to the Theory and Applications of Analysis and
              Geometry to Physics},
    VOLUME = {18},
      YEAR = {2015},
    NUMBER = {1},
     PAGES = {Art. 22, 26},
      ISSN = {1385-0172,1572-9656},
   MRCLASS = {35Q55 (58J50)},
  MRNUMBER = {3376154},
MRREVIEWER = {Pietro\ d'Avenia},
       DOI = {10.1007/s11040-015-9191-8},
       URL = {https://doi.org/10.1007/s11040-015-9191-8},
}

\end{document}